\newcounter{corr}
\definecolor{violet}{rgb}{0.580,0.,0.827}
\newcommand{\corr}[3]{\typeout{Warning : a correction remains in page
		\thepage}
	\stepcounter{corr}        
	{\color{blue}\ifmmode\text{\,\sout{\ensuremath{#1}}\,}\else\sout{#1}\fi}
	{\color{red}#2}
	{\color{violet} #3}}
\newtheorem{theorem}{Theorem}
\newtheorem{proposition}[theorem]{Proposition}
\newtheorem{definition}[theorem]{Definition}
\newtheorem{lemma}[theorem]{Lemma}
\newtheorem{corollary}[theorem]{Corollary}
\newtheorem{remark}[theorem]{Remark}
\newcommand{\R}{\mathbb R}
\newcommand{\A}{\mathcal A}
\newcommand{\K}{\mathcal K}
\renewcommand{\S}{\mathcal S}
\newcommand{\oE}{\overline{E}_\lambda}
\newcommand{\Om}{\Omega}
\newcommand{\eps}{\varepsilon} \def\R{\mathbb R} \def\K{\mathcal K}
\def\I{\mathcal I} 
\def\E{\mathcal E}
\author{Cyrill B. Muratov} \address{Department of Mathematical
  Sciences, New Jersey Institute of Technology, Newark, NJ 07102, USA}
\email{muratov@njit.edu}
\author{Matteo Novaga} \address{Department of Mathematics, University
  of Pisa, Largo B. Pontecorvo 5, 56127 Pisa, Italy}
\email{matteo.novaga@unipi.it}
\author{Berardo Ruffini} \address{Institut Montpelli\'erain Alexander
  Grothendieck, Universit\'e de Montpellier, place Eugene Bataillon
  34095, Montpellier Cedex 5, France, and Dipartimento di Matematica,
  Alma Mater Studiorum -- Universit\`a di Bologna, Piazza di Porta San
  Donato 5, 40126 Bologna, Italy} \email{berardo.ruffini@unibo.it}
\numberwithin{equation}{section}
\title[Conducting flat drops]{Conducting flat drops in a
  confining potential}
\begin{document}

\begin{abstract}
  We study a geometric variational problem arising from modeling
  two-dimensional charged drops of a perfectly conducting liquid in
  the presence of an external potential. We characterize the
  semicontinuous envelope of the energy in terms of a parameter
  measuring the relative strength of the Coulomb interaction. As a
  consequence, when the potential is confining and the Coulomb
  repulsion strength is below a critical value, we show existence and
  partial regularity of volume-constrained minimizers. We also derive
  the Euler--Lagrange equation satisfied by regular critical points,
  expressing the first variation of the Coulombic energy in terms of
  the normal $\frac12$-derivative of the capacitary potential.
\end{abstract}

\maketitle

\noindent {\it Keywords:} geometric variational problems, Coulombic
energy, relaxation, fractional PDEs

\noindent {\it Mathematics Subject Classification (2010):} Primary
49Q10; Secondary 49J45, 31A15

\tableofcontents 

\section{Introduction}

This paper is concerned with a geometric variational problem modeling
charged liquid drops in two space dimensions, whose study was
initiated in \cite{MurNovRuf}.  The problem in question arises in the
studies of electrified liquids, and one of its main features is that
the Coulombic repulsion of charges competes with the the cohesive
action of surface tension and tends to destabilize the liquid drop
\cite{Tay,fernandezdelamora07,Ray}, an effect that is used in many
concrete applications (see, e.g.,
\cite{gaskell97,barrero07,castrohernandez15}).  From a mathematical
point of view, this problem is interesting due to the competition
between short-range attractive and long-range repulsive forces that
produces non-trivial energy minimizing configurations and even
nonexistence of minimizers when the total charge is large enough {(for
  an overview, see \cite{cmt:nams17})}.  The original model in three
dimensions was proposed by Lord Rayleigh \cite{Ray} and later
investigated by many authors (see, for example, \cite{Tay,FonFri,
  basaran89a,burton11,garzon14,gnrI,gnrII,MurNov,DHV}; this list is
not meant to be exhaustive).

In mathematical terminology, we are interested in
the properties of the energy 
\begin{equation}\label{problemintro}
  E_\lambda(\Om) := \mathcal H^1(\partial \Omega) + \lambda\,
  \I_1(\Om) + \int_{\Om} g(x)\,dx, 
\end{equation}
where $\Omega\subset\R^2$ is a compact set with smooth boundary and
prescribed area $|\Omega| = m$, 
\begin{equation}
  \label{eqIk}
  \I_1(\Omega):=\inf_{\mu \in \mathcal P(\Omega)} 
  \int_\Omega \int_\Omega \frac{d\mu(x)\,d\mu(y)}{|x-y|},
\end{equation}
where $\mathcal P(\Omega)$ is the space of probability measures
supported on $\Omega$, and $g$ is a continuous function.  The function
$\mathcal I_1(\Omega)$ is often referred to as the 1-Riesz capacitary
energy of $\Omega$, and the right-hand side of \eqref{eqIk} admits a
unique minimizer $\mu_\Om$, which is called the {\em
  equilibrium measure} of $\Om$ \cite{landkof}.  Physically, the terms
in {\eqref{problemintro}} are, in the order of appearence: the excess
surface energy of a flat drop, the self-interaction Coulombic energy
of a perfect conductor carrying a fixed charge, and the effect of an
external potential.  The first term in {\eqref{problemintro}} acts as
a cohesive term. In contrast, the second term is a capacitary term due
to the presence of a charge and acts on the drop as a {repulsive}
term. The parameter $\lambda > 0$ measures the relative strength of
Coulombic repulsion. We refer to \cite{MurNovRuf} for a more
comprehensive derivation of the two-dimensional model, as well as for
a deeper physical background.

A minimization problem for {\eqref{problemintro}} must take into
account the fine balance that exists between the surface and the
capacitary term \cite{MurNovRuf}. A rough prediction of the behavior
of minimizers, when they exist, is that if $\lambda$ is big enough,
then the drop will tend to be unstable, possibly leading to absence of
a minimizer at all, while if $\lambda$ is small, the {dominant} term
is the surface one, leading to existence and stability of {energy
  minimizing} drops in a suitable class of sets. One of the purposes
of this paper is to make the above prediction precise.

We point out that the energy above is a particular case of the more
general energy
\begin{equation}\label{genmin}
  E_{\lambda,\alpha,N}(\Omega) :=  \mathcal H^{N-1}(\partial
  \Omega)+\lambda 
  \I_\alpha(\Omega)+\int_{\Omega }g(x) \, dx, \qquad 
  \mathcal I_\alpha(\Omega)
  = \inf_{\mu \in \mathcal P(\Omega)} 
  \int_\Omega \int_\Omega \frac{d\mu(x)\,d\mu(y)}{|x-y|^\alpha},
\end{equation}
where $\Omega \subset \mathbb R^N$ is a compact set with smooth
boundary and with prescribed Lebesgue measure $|\Omega| = m$, and
$\alpha \in (0, N)$.  For $g \equiv 0$, some mathematical analysis of
the minimization problem associated with \eqref{genmin} has been
carried out in \cite{gnrI,gnrII}, where it was shown that the problem
is ill-posed for $\alpha <N- 1$. Indeed, the nonlocal term
$\I_\alpha(\Om)$ is finite whenever the Hausdorff {dimension} of {a
  compact set} $\Om$ is {greater} than $N-\alpha$. On the other hand,
the Hausdorff measure $\mathcal H^{N-1}$ is {trivially} null on sets
whose Hausdorff {dimension} is less than $N-1$. Thus, whenever a
positive gap between $N-\alpha$ and $N-1$ occurs, it is possible to
construct sets with $\I_\alpha$ positive and finite, but zero
$\mathcal H^{N-1}$-measure, ensuing non-existence of minimizers
\cite{gnrI}. The existence of a minimizer for \eqref{genmin} in the
case $g = 0$ and $\alpha \geq N-1$ is still open, except in the
borderline case $N = 2$ and $\alpha = 1$ (see \cite{MurNovRuf}). In
this latter case, we showed that there exists an explicit threshold
{$\lambda = \lambda_c(m)$, where
  \begin{align}
    \label{eq:lamc}
    \lambda_c(m): = {4 m\over \pi},    
  \end{align}
}such that for $\lambda>\lambda_c(m)$ no minimizer {exists}, while for
$\lambda\le\lambda_c(m)$ the only minimizer is a ball of measure $m$.

In this paper we address the question of existence and qualitative
properties of minimizers of \eqref{genmin} for $N = 2$ and
$\alpha = 1$.  To that aim, in Theorem \ref{teorel} we characterize
the lower semicontinuous envelope of the energy $E_\lambda$ with
respect to the $L^1$ topology.  As a corollary, we show that the
energy $E_\lambda$ is lower semicontinuous as long as $\lambda$ is
below the precise {threshold} $\lambda_c(m)${, which is the same as
  the one for the case $g \equiv 0$}.  Then in Theorem
\ref{teoexistence} we prove, under a suitable coercivity assumption on
$g$, the existence of volume-constrained minimizers for $E_\lambda$,
as long as $\lambda\le \lambda_c(m)$.  Furthermore, in Theorem
\ref{teodensity} we obtain density estimates and finiteness of the
number of connected components of minimizers.  Building on these
partial regularity results, in Theorem \ref{teoasy} we consider the
asymptotic regime $\lambda,\,m\to 0$, with
$\limsup (\lambda/\lambda_c(m)) < 1$.  In this limit the potential
term is of lower order, and we show that minimizers, suitably
rescaled, tend to a ball, which is the unique minimizer when $g=0$
\cite{MurNovRuf}.  Finally, in Theorem \ref{teocinque} we compute the
first variation of $\I_1$ and, as a consequence, we derive the
Euler--Lagrange equation of the functional $E_\lambda$, for
sufficiently smooth sets.

\smallskip

\paragraph{\bf Acknowledgements} The work of CBM was partially
supported by NSF via grants DMS-1614948 and DMS-1908709. MN has been
supported by GNAMPA-INdAM and by the University of Pisa via grant PRA
2017-18. BR was partially supported by the project ANR-18-CE40-0013
SHAPO financed by the French Agence Nationale de la Recherche (ANR)
and the GNAMPA-INdAM Project 2019 "Ottimizzazione spettrale non
lineare".

\section{Statement of the main results}

As was mentioned earlier, the main difficulty in showing existence of
minimizers for the variational problems above is that adding a surface
term to a nonlocal capacitary term typically leads to an ill-posed
problem. The strategy adopted in \cite{MurNovRuf} to study the
  minimizers of \eqref{problemintro} with $g = 0$ was to show
directly a lower bound on the energy, given by that of a single
ball, using some concentration compactness tools and some fine
properties of the theory of convex bodies in dimension two. The
presence of the bulk energy in \eqref{problemintro} precludes
application of these techniques.

The strategy of this paper is different: We first characterize the
lower semicontinuous envelope of the functional $E_\lambda$, in a
class of sets which includes compact sets with smooth boundary.  Its
explicit expression allows us to state that for certain values of
$\lambda$, the energy $E_\lambda$ is lower semicontinuous with respect
to the $L^1$ convergence. To state the main results of the paper, we
introduce some notation. Given $m>0$, we denote by $\A_m$ the class of
all measurable subsets of $\R^2$ of measure $m$:
\begin{align}
  \label{Am}
  \A_m := \left\{ \Omega\subset\R^2\,: \, |\Omega| = m \right\}. 
\end{align}
We then introduce the families of sets
\begin{align}\noindent
 \S_m&:=\left\{\Omega\in\A_m\,: \,\Omega \text{ compact},
        \, \partial\Omega \text{ smooth} \right\}, 
\\  \label{Km}
  \K_m&:=\left\{\Omega\in\A_m\,: \,\Omega \text{ compact},
        \, \mathcal H^1(\partial \Omega)<+\infty \right\}.
\end{align}
We can extend the functional $E_\lambda$ defined in
\eqref{problemintro} over $\mathcal S_m$ to the whole of $\K_m$ by
setting
\begin{equation}\label{problem}
  E_\lambda(\Om) := P(\Omega) + \lambda\,
  \I_1(\Om) + \int_{\Om} g(x)\,dx,
 \qquad \Omega\in\K_m\,, 
\end{equation}
where $P(\Omega)$ denotes the De Giorgi perimeter of $\Omega$, defined as
\begin{equation}\label{defper}
  P(\Omega):=\sup\left\{\int_{\Omega}{\rm div} \, \phi \, \, dx\,:\,\phi\in
    C^{1}_c(\mathbb R^2; \mathbb R^2),\, \|\phi\|_{L^\infty(\mathbb
        R^2)}\le 1 \right\} ,
\end{equation}
which coincides with $\mathcal H^1(\partial \Omega)$ if $\Omega\in\S_m$.
Given an open set $A\subset \R^2$, we also define the perimeter of
$\Omega$ in $A$ as
\begin{align}
  P(\Omega;A):=\sup\left\{\int_{{\Omega \cap A}}{\rm div} \, \phi
    \, \, dx\,:\,\phi\in 
    C^{1}_c(A; \mathbb R^2),\, \|\phi\|_{L^\infty(A)}\le 1 \right\} ,
\end{align}
so that, in particular, $P(\Om)=P(\Om;\R^2)$.

For $\Omega\in \A_m$ we introduce the $L^1$-relaxed energy for
$E_\lambda$ restricted to $\S_m$:
\begin{equation}
  \label{oE}
  \oE(\Om) := \inf_{\Omega_n\in\S_m,\,|\Om_n\Delta \Om|\to 0} \
  \liminf_{n\to \infty} E_\lambda(\Omega_n). 
\end{equation}
We observe that, as a consequence of Proposition \ref{p:schmidt} and
Corollary \ref{cor:schmidt} in the following section, we can
equivalently define $\oE$ starting from sets $\Omega_n\in\K_m$ in
\eqref{oE}, that is, there holds
\begin{align}
\oE(\Om) = \inf_{\Omega_n\in\K_m,\,|\Om_n\Delta \Om|\to 0} \
\liminf_{n\to \infty} E_\lambda(\Omega_n).
\end{align}

Our first result below provides an explicit characterization of the
relaxed energy for sets in $\K_m$.

\begin{theorem}\label{teorel}
  Let $g$ be a continuous function bounded from below and let $m >
  0$. Then for any $\Om\in \K_m$ {we have
    $\oE(\Om) = \E_\lambda(\Om)$, where}
  \begin{equation} \label{Erelaxeddef} {\E_\lambda}(\Om)
    := \begin{cases}
      E_\lambda(\Om) &  \text{if }\lambda\le\lambda_\Om,
      \\
      E_{\lambda_\Om}(\Om) + 2\pi \big(\sqrt\lambda - \sqrt\lambda_\Om\big) 
      & \text{if }\lambda>\lambda_\Om,
    \end{cases}
  \end{equation}
  {and}
  \begin{equation}\label{lambda}
    \lambda_\Om := \left(\frac{\pi}{\I_1(\Om)}\right)^2.
  \end{equation}
  In particular, $E_\lambda$ is lower semicontinuous on $\K_m$ with
  respect to the $L^1-$convergence if and only if
  $\lambda\le \lambda_c(m)$.
\end{theorem}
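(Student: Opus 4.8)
The plan is to derive Theorem~\ref{teorel} from the two-step structure that is already visible in the statement: an \emph{upper bound} $\oE(\Om)\le \E_\lambda(\Om)$ obtained by exhibiting an explicit recovery sequence, and a matching \emph{lower bound} $\oE(\Om)\ge \E_\lambda(\Om)$ obtained by analyzing arbitrary approximating sequences. For the upper bound, when $\lambda\le\lambda_\Om$ one simply approximates $\Om\in\K_m$ by smooth sets $\Om_n\in\S_m$ with $|\Om_n\Delta\Om|\to0$, $P(\Om_n)\to P(\Om)$, $\I_1(\Om_n)\to\I_1(\Om)$ and $\int_{\Om_n}g\to\int_\Om g$ (continuity of $\I_1$ under such approximations and lower semicontinuity/continuity of perimeter for sets of finite perimeter being standard, cf.\ the results quoted from \cite{MurNovRuf} and the Schmidt-type Proposition~\ref{p:schmidt}); hence $\oE(\Om)\le E_\lambda(\Om)$, and the reverse is immediate from $L^1$-lower semicontinuity of each term when $\lambda$ is small. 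When $\lambda>\lambda_\Om$, the recovery sequence must shed the ``excess charge'': one keeps a set of the form $\Om$ together with a tiny ball $B_{r_n}(x_n)$ of vanishing measure placed far away (at distance $R_n\to\infty$), so that $|\Om\cup B_{r_n}(x_n)|$ is corrected back to $m$ by an infinitesimal adjustment. As $R_n\to\infty$, the interaction between the two pieces decouples, $\I_1(\Om\cup B_{r_n})\to$ the parallel-capacitor value, and optimizing the split of the unit charge between the two components yields exactly the constant $2\pi(\sqrt\lambda-\sqrt{\lambda_\Om})$ added to $E_{\lambda_\Om}(\Om)$; the surface and potential contributions of the small ball vanish because $r_n\to0$ and $g$ is continuous (the escaping ball contributes $2\pi r_n\to 0$ to perimeter and $O(r_n^2)\to0$ to the bulk term, using $g$ bounded below to keep things finite). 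I would choose $r_n,R_n$ so that $r_n\to0$, $r_n R_n\to\infty$ (to decouple the Coulomb interaction) while $2\pi r_n\to0$; a short computation with the explicit capacity of a ball ($\I_1(B_r)=\pi/(2r)$, up to the normalization used in the paper) shows the minimization over the charge split $q\in[0,1]$ is solved by balancing $\lambda_\Om/q^2$ against the ball's term, producing the stated affine-in-$\sqrt\lambda$ correction.

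For the lower bound, let $\Om_n\in\S_m$ with $|\Om_n\Delta\Om|\to0$ and $\liminf_n E_\lambda(\Om_n)<\infty$. By lower semicontinuity of perimeter and Fatou/continuity for the bulk term, $\liminf_n\big(P(\Om_n)+\int_{\Om_n}g\big)\ge P(\Om)+\int_\Om g$; the issue is the Coulomb term, which is \emph{not} lower semicontinuous because mass (and hence capacity) can escape to infinity or concentrate on a vanishing-measure set. The key is a concentration-compactness / mass-splitting argument on the equilibrium measures $\mu_{\Om_n}$: up to subsequences the sets $\Om_n$ split into a ``bulk part'' converging to $\Om$ and finitely many ``escaping balls'' whose total measure tends to $0$; correspondingly the unit charge distributes as $q_0$ on the bulk and $q_j$ on the escaping pieces with $\sum q_j=1$. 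Using the parallel-capacitor lower bound $\I_1(\Om_n)\gtrsim \big(\sum_j q_j^2/\I_1(\text{piece}_j)^{-1}\big)$—more precisely $1/\I_1(A\cup B)\le \sup_{q}\big(q^2/\I_1(A)+(1-q)^2/\I_1(B)\big)^{-1}$ when $A,B$ are far apart, giving in the limit $\liminf\I_1(\Om_n)\ge$ the value of the relaxed split problem—one bounds $\liminf_n \lambda\,\I_1(\Om_n)$ below by $\inf_{q\in(0,1]}\big(\lambda\,q^2\,\I_1(\Om)\big)$ coming from the bulk, \emph{plus} the contribution of charge carried by the shrinking balls, which costs $\ge 2\pi\sqrt\lambda\,(1-q)$ in the limit since a ball of charge $1-q$ and vanishing radius forces $\I_1\to\infty$ like $(1-q)^2/(2r)$ but its perimeter $2\pi r$ combines with it to give, after optimizing $r$, the term $2\pi\sqrt{\lambda}\,(1-q)$. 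Minimizing the resulting expression $\lambda q^2\I_1(\Om)+2\pi\sqrt\lambda(1-q)+P(\Om)+\int_\Om g$ over $q\in(0,1]$ gives precisely $\E_\lambda(\Om)$: for $\lambda\le\lambda_\Om$ the minimum is at $q=1$ (no splitting pays off), and for $\lambda>\lambda_\Om$ the optimal $q=\sqrt{\lambda_\Om/\lambda}<1$ yields $E_{\lambda_\Om}(\Om)+2\pi(\sqrt\lambda-\sqrt{\lambda_\Om})$.

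The final ``in particular'' then follows by unwinding the formula: $E_\lambda$ is $L^1$-lower semicontinuous on $\K_m$ iff $\oE(\Om)=E_\lambda(\Om)$ for every $\Om\in\K_m$, i.e.\ iff $\lambda\le\lambda_\Om$ for all $\Om\in\K_m$, i.e.\ iff $\lambda\le\inf_{\Om\in\K_m}\big(\pi/\I_1(\Om)\big)^2$. It remains to identify this infimum with $\lambda_c(m)=4m/\pi$, which amounts to the sharp isoperimetric-type inequality $\I_1(\Om)\le \I_1(B)$ for the ball $B$ of measure $m$ (equivalently $\sup_{|\Om|=m}\I_1(\Om)$ is attained by the ball, with value $\pi/(2\sqrt{m/\pi})=\tfrac12\sqrt{\pi/m}\cdot\pi$, so that $(\pi/\I_1(B))^2=4m/\pi$); this maximality of the ball's capacity among sets of given area is a classical fact (and is exactly the rigidity underlying the $g\equiv0$ result of \cite{MurNovRuf}), so the stated threshold drops out. \textbf{Main obstacle.} The delicate point is making the concentration-compactness decomposition of the equilibrium measures $\mu_{\Om_n}$ rigorous and showing the liminf of $\I_1(\Om_n)$ is governed \emph{exactly} by the parallel-capacitor split value—controlling cross terms between the bulk and escaping pieces (they vanish because distances blow up, but one must ensure the escaping radii can be taken small enough) and ruling out pathological spreading of charge along thin filaments that carries capacity without perimeter cost; the De Giorgi perimeter bound $P(\Om_n)\le C$ is what prevents the latter, but combining it with the nonlocal term requires care.
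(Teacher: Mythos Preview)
Your high-level strategy matches the paper's: build a recovery sequence for the upper bound, split an arbitrary approximating sequence into a ``bulk'' part and an ``escaping'' part for the lower bound (using the universal inequality $\mathcal H^1(\partial V)+\mu\,\I_1(V)\ge 2\pi\sqrt\mu$ of Lemma~\ref{lemtom} on the escaping part), and read off the threshold via the extremality of the ball for $\I_1$ at fixed area. Your variational rewriting of $\E_\lambda(\Om)$ as $P(\Om)+\int_\Om g+\min_{q\in[0,1]}\big(\lambda q^2\I_1(\Om)+2\pi\sqrt\lambda\,(1-q)\big)$ is also exactly what appears in the paper's computation.

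The recovery sequence you propose for $\lambda>\lambda_\Om$, however, does not work. With a \emph{single} ball $B_{r_n}(x_n)$ attached to $\Om$, the requirement $|\Om_n\Delta\Om|\to 0$ forces $r_n\to 0$; but then for any fixed charge fraction $1-q>0$ one has $\lambda(1-q)^2\I_1(B_{r_n})=\lambda(1-q)^2\pi/(2r_n)\to\infty$, while if one optimizes the split at each $n$ the charge on the ball tends to $0$ and the energy merely converges to $E_\lambda(\Om)$, not to the smaller $\E_\lambda(\Om)$. The balance ``$2\pi r$ against $\lambda(1-q)^2\pi/(2r)$'' you invoke selects a \emph{fixed} radius $r=(1-q)\sqrt\lambda/2$, incompatible with $r_n\to 0$. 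What is needed is a configuration of vanishing total area but \emph{nonvanishing} total perimeter $2\pi r$ and capacitary contribution $(1-q)^2\pi/(2r)$. The paper achieves this by using $n$ balls of radius $r/n$ placed at mutual distance $\gtrsim R/\sqrt n$ in an annulus $B_{2R}\setminus B_R$: total perimeter $2\pi r$, total area $\pi r^2/n\to 0$, and distributing charge $(1-q)/n$ on each gives Coulomb cost $(1-q)^2\pi/(2r)$ plus pairwise interactions summing to $O(1/R)$; one then sends $n\to\infty$ and finally $R\to\infty$.

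For the lower bound your idea is correct in spirit, but note that you cannot first invoke lower semicontinuity of the perimeter to get $\liminf P(\Om_n)\ge P(\Om)$ and \emph{afterwards} control $\I_1(\Om_n)$: the perimeter of the escaping pieces is precisely what pays for their Coulomb cost via Lemma~\ref{lemtom}, so the two must be kept together. Rather than concentration-compactness on the equilibrium measures, the paper performs a direct geometric splitting: fatten $\Om$ to $\Om^\delta$, write $\Om_n=(\Om_n\cap\Om^{\delta_n})\cup\overline{\Om_n\setminus\Om^{\delta_n}}$ with $\delta_n\in(\delta/2,\delta)$ chosen by coarea so that the interface length is $\le 2|\Om_n\Delta\Om|/\delta$, apply Lemma~\ref{lemtom} with parameter $\lambda(1-t)^2$ to the far piece, use monotonicity and the continuity of $\I_1$ along fattenings (Lemma~\ref{1maggio}) on the near piece, minimize over $t$, and finally send $\delta\to 0$. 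This produces the minimum over $q$ that you identified without any analysis of $\mu_{\Om_n}$.
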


\noindent We recall that the quantity $\lambda_c(m)$ is defined in
  \eqref{eq:lamc}.

Note that as can be easily seen from the definition of
$\lambda_\Omega$, we always have
$E_\lambda(\Om) \geq E_{\lambda_\Om}(\Om) + 2 \pi (\sqrt{\lambda} -
\sqrt{\lambda_\Om})$ for all $\lambda > 0$. Therefore, the result of
Theorem \ref{teorel} may be interpreted as follows: either it is
energetically convenient to distribute all the charges over the set
$\Omega$ or it is favorable to send some excess charge off to
infinity. More precisely, for a given set $\Omega$ such that
$\lambda > \lambda_\Omega$ it is possible to find a sequence of sets
converging to $\Omega$ in the $L^1$ sense that contain vanishing parts
with positive capacitary energy. In particular, the vanishing parts
contribute a finite amount of energy to the limit, which is a
non-trivial property of the considered problem.
      
The above result implies existence of minimizers for
$\overline E_\lambda$ in $\mathcal A_m$, as long as we require 
the coercivity and the local Lipschitz continuity of the function $g$.
\begin{definition}\label{defGc}
  We say that a function $g:\R^2\to \R$ is coercive if
\begin{align}
  \label{coerc}
  \lim_{|x| \to +\infty} g(x) = +\infty,
\end{align}
Furthermore, we define the class of functions $\mathcal G$ as
follows:
\begin{align}
  \label{eq:G}
  \mathcal G := \{g:\R^2\to [0,+\infty)\,:\;
    \text{$g$ is locally Lipschitz continuous and coercive}\}.
\end{align}
\end{definition}

\noindent Note that the assumption of positivity of $g$ in
\eqref{eq:G} is not essential and may be replaced by boundedness of
$g$ from below. For this class of functions, which represent the
effect of confinement by an external potential $g$, we have the
following existence result.

\begin{theorem}\label{teoexistence}
  Let $m > 0$, let $\lambda<\lambda_c(m)$ and let
  $g\in \mathcal G$.  Then there exists a minimizer $\Omega_\lambda$
  for $E_\lambda$ over all sets in $\mathcal K_m$.
\end{theorem}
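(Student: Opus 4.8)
The plan is to apply the direct method, but not to $E_\lambda$ on $\K_m$ directly: instead I would work with the relaxed energy $\oE$ on the larger class $\A_m$, and only at the end return to $\K_m$ and to $E_\lambda$ via Theorem \ref{teorel}. As a first reduction, setting $L:=\inf_{\K_m}E_\lambda$, I would record that $L=\inf_{\A_m}\oE<+\infty$: finiteness is clear by testing with a disk of measure $m$, while the equality of the two infima follows from Theorem \ref{teorel} — which gives $\oE=\E_\lambda\le E_\lambda$ on $\K_m$, the pointwise bound $\E_\lambda\le E_\lambda$ being the elementary inequality noted right after that theorem — together with the density of $\S_m$ in $\A_m$ for the $L^1$ distance and the definition of $\oE$ as a relaxation over $\S_m$ (equivalently over $\K_m$, by Proposition \ref{p:schmidt} and Corollary \ref{cor:schmidt}).

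Next I would run the direct method for $\oE$ over $\A_m$. Take a minimizing sequence $\Om_n\in\K_m\subset\A_m$ with $\oE(\Om_n)\to L$. Since $g\ge0$ and $\I_1\ge0$, formula \eqref{Erelaxeddef} gives $\oE(\Om)\ge P(\Om)$, so $P(\Om_n)\le C$ and $\mathbf 1_{\Om_n}$ is bounded in $BV(\R^2)$; it also gives $\int_{\Om_n}g\le C$, whence by coercivity of $g$ the tightness estimate $|\Om_n\setminus B_R|\le C/\inf_{\R^2\setminus B_R}g\to0$ uniformly in $n$ as $R\to\infty$. By $BV$-compactness together with tightness, along a subsequence $\mathbf 1_{\Om_n}\to\mathbf 1_{\Om_*}$ in $L^1(\R^2)$ with $|\Om_*|=m$ and $P(\Om_*)<+\infty$. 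Since $\oE$ is $L^1$-lower semicontinuous (it is a relaxation with respect to the metrizable $L^1$ topology) and $\oE(\Om_n)\le E_\lambda(\Om_n)$, we obtain $\oE(\Om_*)\le\liminf_n\oE(\Om_n)=L$, so $\Om_*$ minimizes $\oE$ over $\A_m$.

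The third step is to upgrade $\Om_*$ to an element of $\K_m$. Here I would check that $\Om_*$ is a volume-constrained $(\Lambda,r_0)$-minimizer of the perimeter: comparing $\Om_*$ with competitors $\Om'$ satisfying $|\Om'|=m$ and $\Om'\triangle\Om_*\subset\subset B_{r_0}(x)$, and using $\oE(\Om_*)\le\oE(\Om')$ together with the Lipschitz dependence of $\I_1(\cdot)$ and of $\int_{(\cdot)}g$ under small $L^1$-perturbations (the latter from $g\in\mathcal G$ being locally Lipschitz, the former from standard continuity estimates for the $1$-Riesz energy of a fixed set, and with the usual volume-fixing adjustment; note that the map of $\I_1$ appearing in \eqref{Erelaxeddef} is itself Lipschitz), one gets $P(\Om_*;B_r(x))\le P(\Om';B_r(x))+\Lambda|\Om'\triangle\Om_*|$ for $r<r_0$. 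From this quasi-minimality one deduces the density estimates, so a suitable representative of $\Om_*$ is open with $\mathcal H^1(\partial\Om_*)=P(\Om_*)<+\infty$; and $\Om_*$ is bounded, by the confinement: if $v(R):=|\Om_*\setminus B_R|>0$ for every $R$, comparing $\Om_*$ with $(1+\sigma(R))(\Om_*\cap B_R)$ (suitably centered, $\sigma(R)\to0$) and invoking the isoperimetric inequality gives $-\tfrac{d}{dR}\sqrt{v(R)}\ge c>0$ for $R$ large, forcing $v\equiv0$ beyond a finite radius. After a null-set modification, $\Om_*\in\K_m$.

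Finally, since $\Om_*\in\K_m$ and $\lambda<\lambda_c(m)$, Theorem \ref{teorel} asserts that $E_\lambda$ is $L^1$-lower semicontinuous on $\K_m$; applied to $\Om_n\to\Om_*$ this yields $E_\lambda(\Om_*)\le\liminf_nE_\lambda(\Om_n)=L=\inf_{\K_m}E_\lambda$, so $\Om_*$ is the desired minimizer. I expect the genuinely delicate point to be hidden in this last step, which is equivalent to showing $\lambda\le\lambda_{\Om_*}$ — i.e. that the relaxed minimizer does not shed part of its charge to infinity — so that $E_\lambda(\Om_*)=\E_\lambda(\Om_*)=\oE(\Om_*)=L$. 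This is exactly where the strict bound $\lambda<\lambda_c(m)$ is used (alongside the confinement of $g$): by \eqref{Erelaxeddef} one has $\oE(\Om)<E_\lambda(\Om)$ whenever $\lambda>\lambda_\Om$, so a relaxed minimizer falling in that regime would fail to minimize $E_\lambda$, and it must be excluded — the content of the lower semicontinuity statement in Theorem \ref{teorel} being precisely that it is excluded for $\lambda\le\lambda_c(m)$. The remaining work, namely the quasi-minimality, the density estimates and the boundedness driven by the potential in Step 3, is by now routine but does rely on $g$ being locally Lipschitz and on the perturbation estimates for $\I_1$.
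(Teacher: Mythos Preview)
Your Steps 1 and 2 are correct and essentially reproduce the paper's Proposition \ref{existence}. The difficulty, as you correctly sense, is Step 3 --- showing that the relaxed minimizer $\Om_*$ lies in $\K_m$ --- and there your argument has a genuine gap.

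You assert that $\Om_*$ is a volume-constrained $(\Lambda,r_0)$-minimizer of the perimeter, invoking ``Lipschitz dependence of $\I_1(\cdot)$ under small $L^1$-perturbations''. This is false in the sense you need. The relevant estimate is Lemma \ref{distributionofcharges}: if $\Om = U\cup V$ with $|U\cap V|=0$, then $\I_1(U) - \I_1(\Om) \le \frac{\pi}{4|U|}\,P(V)$. Thus removing a small piece $V$ changes $\I_1$ by an amount controlled by the \emph{perimeter} of $V$, not its volume. The nonlocal term is therefore a \emph{critical} perturbation of the perimeter --- it scales the same way --- and minimizers of $E_\lambda$ are \emph{not} quasi-minimizers of $P$ in the standard sense; the paper states this explicitly at the start of Section \ref{teo3}. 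The density estimates of Theorem \ref{teodensity} do eventually hold, but they are proved \emph{after} existence and exploit precisely the subcriticality $\lambda<\lambda_c(m)$ to absorb the $\I_1$-contribution into the perimeter via the positive factor $(1-\lambda\pi/(4m))$; they are not available to you at this stage. A secondary issue: your competitor argument for $\oE$ requires the formula \eqref{Erelaxeddef} for $\oE(\Om')$, but that formula is only established for $\Om'\in\K_m$.

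The paper circumvents Step 3 entirely. Rather than proving regularity of the relaxed limit, it modifies the minimizing sequence itself: Proposition \ref{probounded} shows (using Lemma \ref{distributionofcharges} together with the strict bound $\lambda<\lambda_c(m)$) that each $\Om_n\in\K_m$ can be replaced, without increasing the energy, by a set contained in a fixed ball with a uniformly bounded number of connected components and holes. With the topology of $\partial\Om_n$ so controlled, one passes to the limit in Hausdorff distance via Blaschke's theorem and invokes Go\l\c{a}b's theorem to obtain $\mathcal H^1(\partial\Om)\le\liminf_n\mathcal H^1(\partial\Om_n)<+\infty$, so the limit lies in $\K_m$ directly. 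Lower semicontinuity of $E_\lambda$ on $\K_m$ (Theorem \ref{teorel}) then closes the argument.
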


We stress that the existence result stated in Theorem
\ref{teoexistence} is not a direct consequence of Theorem
\ref{teorel}, the reason being that the class $\mathcal K_m$ is not
closed under $L^1-$convergence, and is in fact one of the main results
of this paper.

Given $\Omega\in\K_m$, we let $\Omega^+$ be defined as
\begin{align}
\Omega^+ := \left\{ x\in\Omega:\ |\Omega\cap B_r(x) |>0\text{ for all }r>0\right\} .
\end{align}
Notice that $\Omega^+$ is a closed set.  Indeed, recalling that $\Om$
is closed we have that $x\in (\Om^+)^c$ if and only if there exists
$r>0$ such that $ |\Omega\cap B_r(x)|=0$. Then, for every
$y\in B_r(x)$ there holds $|\Om\cap B_{r-|y-x|}(y)|=0$, hence
$y\in (\Om^+)^c$, that is, $(\Om^+)^c$ is open and $\Om^+$ is closed.
Furthermore, if $\Om\in\K_m$, we have $\Omega^+\subset\Omega$ and
$\Omega\setminus\Omega^+= \left\{ x\in\Omega:\ |\Omega\cap B_r(x)
  |=0\text{ for some }r>0\right\} \subset\partial \Omega$, so that
\begin{align}
\mathcal H^1(\Omega\setminus\Omega^+)\le \mathcal H^1(\partial
\Omega)<+\infty.
\end{align}
As a consequence we get $|\Om|=|\Om^+|$, $P(\Om)=P(\Om^+)$. Moreover,
since the Hausdorff dimension of $\Omega\setminus\Omega^+$ is at most
$1$, then $\I_1(\Om)=\I_1(\Om^+)$ (see Lemma \ref{l:optm} below).
Therefore $E_\lambda(\Om)=E_\lambda(\Om^+)$, and $\Om$ is a minimizer
of $E_\lambda$ if and only if $\Om^+$ is a minimizer.  We observe that
$\Omega^+$ is a representative of $\Omega$ which is in general more
regular, and for which we can show density estimates which do not
necessarily hold for $\Omega$ itself.

We now state a partial regularity result for the minimizers given in
Theorem \ref{teoexistence}.

\begin{theorem}\label{teodensity}
  Let $m>0$, $\lambda<\lambda_c(m)$ and $g\in\mathcal G$.  Let also
  {$\Omega_\lambda$} be a minimizer of $E_\lambda$ {over
    $\mathcal K_m$}.  Then there exist $c > 0$ universal and $r_0>0$
  depending only on $m$, $\lambda$ and $g$ such that for every
  $0<r\le r_0$ and every $x\in\partial\Omega_\lambda^+$ there holds
  \begin{equation}\label{eqdens}
    |\Om_\lambda\cap B_r(x) |\ge c {\left( 1 - {\lambda \over
          \lambda_c(m)} \right)^2} r^2 \qquad {\rm and}\qquad
    |\Om_\lambda^c\cap B_r(x) |\ge c {\left( 1 - {\lambda \over
          \lambda_c(m)} \right)^2} r^2. 
  \end{equation}
  Furthermore, both $\Omega_\lambda$ and $\Omega_\lambda^c$ have a
  finite number of indecomposable components in the sense of
  \cite[Section 4]{ACMM}.
\end{theorem}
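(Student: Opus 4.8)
The plan is to establish the two density estimates first, and then deduce the finiteness of indecomposable components as a consequence of the lower density bound together with an isoperimetric-type argument.

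\medskip

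\noindent\textbf{Density estimates.} I would argue by a standard comparison/truncation technique adapted to the confined charged-drop setting. Fix a minimizer $\Omega_\lambda$ and a point $x\in\partial\Omega_\lambda^+$. For a.e.\ $r>0$ set $m_r := |\Omega_\lambda\cap B_r(x)|$; by the coarea formula $m_r' = \mathcal H^1(\Omega_\lambda\cap\partial B_r(x))$ for a.e.\ $r$. The competitor for the \emph{lower} bound on $|\Omega_\lambda\cap B_r(x)|$ is obtained by removing $\Omega_\lambda\cap B_r(x)$ and adding a small ball elsewhere to restore the volume constraint; one must be careful because $\mathcal K_m$ is not $L^1$-closed, but since $\Omega_\lambda$ is a minimizer over $\mathcal K_m$ and the modified set lies in $\mathcal K_m$, this is legitimate. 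Removing a piece of small volume $m_r$ costs at most $C\sqrt{m_r}$ in perimeter to add it back (by the isoperimetric inequality) and a bounded amount in the potential term (here local Lipschitz continuity and coercivity of $g$ enter: the added ball can be placed in a fixed compact region where $g$ is controlled, using that minimizers are uniformly bounded — this uniform boundedness should itself follow from coercivity of $g$ and the energy bound $E_\lambda(\Omega_\lambda)\le E_\lambda(B)$). The perimeter gain from the removal is $P(\Omega_\lambda)-P(\Omega_\lambda\setminus B_r(x))\ge \mathcal H^1(\partial^*\Omega_\lambda\cap B_r(x)) - \mathcal H^1(\Omega_\lambda^{(1)}\cap\partial B_r(x))$. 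The key analytic point is handling the nonlocal term: by Lemma~\ref{l:optm} (monotonicity of $\I_1$ under set inclusion, or rather the scaling $\I_1(tK)=t^{-1}\I_1(K)$ for the lower bound) and the fact that $\I_1$ is decreasing under enlargement of the set, removing a small piece \emph{increases} $\I_1$, but only by a controlled amount proportional to the volume removed; quantitatively one uses that the equilibrium measure cannot concentrate too much, and the factor $\left(1-\lambda/\lambda_c(m)\right)^2$ appears precisely because the capacitary cost of truncation must be reabsorbed by the surface term, which has effective coefficient $1-\lambda/\lambda_c(m)$ after optimizing against the Coulomb term via Theorem~\ref{teorel}. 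Combining, one gets a differential inequality of the form $C(1-\lambda/\lambda_c(m))\,m_r^{1/2}\le m_r'$ for small $r$, which integrates to $m_r\ge c(1-\lambda/\lambda_c(m))^2 r^2$. The estimate for $|\Omega_\lambda^c\cap B_r(x)|$ is symmetric: one adds $B_r(x)$ to $\Omega_\lambda$ and removes an equal volume far away; now $\I_1$ decreases, which is favorable, so the argument is actually easier on the nonlocal side, and the same type of ODE comparison yields the lower bound on the complement.

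\medskip

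\noindent\textbf{Finiteness of indecomposable components.} Given the lower density estimate, each indecomposable component $\Omega^i$ of $\Omega_\lambda^+$ (in the sense of \cite[Section~4]{ACMM}) satisfies, at any of its points, $|\Omega^i\cap B_r(x)|\ge c(1-\lambda/\lambda_c(m))^2 r^2$ for $r\le r_0$; applying this with $r=r_0$ at a single point of each component gives a uniform lower bound $|\Omega^i|\ge c(1-\lambda/\lambda_c(m))^2 r_0^2 =: \delta > 0$ on the volume of each component. Since $\sum_i|\Omega^i| = |\Omega_\lambda| = m$, there can be at most $m/\delta$ components. For the complement one uses the density estimate for $\Omega_\lambda^c$ together with the uniform bound on $\mathrm{diam}\,\Omega_\lambda$ (from coercivity of $g$) to confine all components of $\Omega_\lambda^c$ within a fixed ball, then applies the same volume-counting to the bounded components and notes there is exactly one unbounded component; the relative isoperimetric inequality in a fixed ball, combined with the perimeter bound $P(\Omega_\lambda)\le E_\lambda(\Omega_\lambda)<\infty$, controls the number.

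\medskip

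\noindent\textbf{Main obstacle.} The delicate step is the nonlocal term in the truncation argument: unlike in the purely local perimeter-minimization setting, removing a small piece of $\Omega_\lambda$ changes the equilibrium measure globally and can in principle raise $\I_1$ by an amount not obviously controlled by the volume removed. The resolution is to use that, by Theorem~\ref{teorel} and the subcriticality $\lambda<\lambda_c(m)$, the energy $E_\lambda$ coincides with its relaxation and behaves, to leading order under small perturbations, like a perimeter functional with the strictly positive effective surface tension $1-\lambda/\lambda_c(m)$; making this precise — i.e.\ showing $\I_1(\Omega_\lambda\setminus B_r(x)) \le \I_1(\Omega_\lambda) + C\,m_r$ with a constant that, after multiplication by $\lambda$, is strictly dominated by the perimeter gain — is where the bulk of the work lies, and is presumably handled by testing $\I_1(\Omega_\lambda\setminus B_r(x))$ with the restriction and renormalization of $\mu_{\Omega_\lambda}$ and carefully estimating the resulting defect in terms of $\mu_{\Omega_\lambda}(B_r(x))$, which in turn is bounded using the known regularity (boundedness) of the equilibrium potential.
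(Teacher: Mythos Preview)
Your overall architecture (truncate, compare energies, derive a differential inequality for $m_r = |\Omega_\lambda\cap B_r(x)|$, integrate) matches the paper. The competitor is slightly different---the paper removes $\Omega_\lambda\cap B_r(x)$ and \emph{rescales} rather than adding a disjoint ball---but this is cosmetic. The substantive gap is in how you control the nonlocal term.

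You propose to show $\I_1(\Omega_\lambda\setminus B_r(x)) \le \I_1(\Omega_\lambda) + C\,m_r$ by restricting and renormalizing $\mu_{\Omega_\lambda}$, then bounding $\mu_{\Omega_\lambda}(B_r(x))$ via ``regularity of the equilibrium potential''. This does not work as stated. Restricting the equilibrium measure yields $\I_1(\Omega_\lambda\setminus B_r) \le \I_1(\Omega_\lambda)/(1-\mu_{\Omega_\lambda}(B_r))^2$, so the increase is governed by $\mu_{\Omega_\lambda}(B_r)$, not by $m_r$. For $x\in\partial\Omega_\lambda$ the density of $\mu_{\Omega_\lambda}$ blows up near $\partial\Omega_\lambda$ (Lemma~\ref{l:optm} only gives $\rho\le C/\mathrm{dist}(\cdot,\partial\Omega_\lambda)$), and no a priori regularity of $\Omega_\lambda$ is available at this stage to do better. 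Even the crude bound $\mu_{\Omega_\lambda}(B_r)\le Cr$ one can extract from boundedness of the potential gives an increase of order $r$, with a constant that bears no relation to $\lambda_c(m)$ and cannot be absorbed to produce the sharp factor $(1-\lambda/\lambda_c(m))$.

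The paper's mechanism is different and is precisely what makes the constant explicit: Lemma~\ref{distributionofcharges} gives
\[
\I_1(\Omega_\lambda)\ \ge\ \I_1(\Omega_\lambda\setminus B_r(x))\ -\ \frac{\pi}{4\,|\Omega_\lambda\setminus B_r(x)|}\,P\bigl(\Omega_\lambda\cap B_r(x)\bigr),
\]
so the capacitary penalty is bounded by a multiple of the \emph{perimeter} of the removed piece, with coefficient $\lambda\pi/(4m)=\lambda/\lambda_c(m)$ after multiplying by $\lambda$. This is exactly the term that gets subtracted from $P(\Omega_\lambda\cap B_r(x))$ in the differential inequality, leaving the effective surface tension $1-\lambda/\lambda_c(m)$; Theorem~\ref{teorel} plays no role here. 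Your write-up should invoke Lemma~\ref{distributionofcharges} (or reprove it) rather than the equilibrium-measure restriction.

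On finiteness of components: your density-based counting argument is plausible for $\Omega_\lambda$, but note that the lower density bound at $x\in\partial\Omega_\lambda^+$ controls $|\Omega_\lambda\cap B_r(x)|$, not the mass of the individual component through $x$, and indecomposable components may touch on $\mathcal H^1$-null sets, so the step ``$|\Omega^i|\ge\delta$'' needs justification. The paper instead reuses the competitor construction of Proposition~\ref{probounded}: if $\Omega_\lambda$ (or its complement) had infinitely many components, the energy could be strictly lowered by discarding the small ones and rescaling, contradicting minimality. This avoids the separation issue entirely.
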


\begin{remark}\label{remamb}\rm
From Theorem \ref{teodensity} and \cite[Theorem II.5.14]{maggi} it follows that
\begin{align}
\mathcal H^1(\partial\Om_\lambda^+)=P(\Om_\lambda^+)=P(\Om_\lambda)\,.
\end{align}
Therefore, the set $\Omega_\lambda^+$ also minimizes the energy $E_\lambda$
  as defined in \eqref{problemintro}, among all sets in
  $\mathcal K_m$.
\end{remark}

The semicontinuity of $E_\lambda$ allows us to get existence of
minimizers for $\lambda< \lambda_c(m)$, but we cannot say much about
their qualitative shape, besides the partial regularity result given
in Theorem \ref{teodensity}. On the other hand, for $m$ sufficiently
small and $\lambda$ small relative to $\lambda_c(m)$ we can show
that the minimizers become close to a single ball of mass $m$ located
at a minimum of $g$.

\begin{theorem}\label{teoasy}
  Let $g\in\mathcal G$ and let $m_k,\,\lambda_k>0$, $k\in\mathbb N$,
  be two sequences such that
	\begin{align}\lim_{k\to +\infty}m_k=0\quad \text{ and }\quad
          \limsup_{k\to +\infty}\frac{\lambda_k}{\lambda_c(m_k)} < 1
          \,.
        \end{align}  
	Then the following assertions are true:
	\begin{enumerate}
        \item For every $k$ large enough there exists a minimizer
            $\Omega_{k}$ of $E_{\lambda_k}$ over $\mathcal K_{m_k}$.
          \item As $k\to\infty$, there exists a bounded sequence
            $(x_k) \in \R^2$ such that the translated and rescaled
            minimizers
            $\left( \frac{\pi}{m_{k}} \right)^{\frac12} (\Omega_{k} -
            x_k)$ converge to $B_1(0)$ in the Hausdorff distance.
          \item If $x_0$ is a cluster point of $(x_k)$, then $x_0 \in \mathrm{argmin} \, g$.
       \end{enumerate}
     \end{theorem}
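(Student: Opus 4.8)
\emph{Overview.} The plan is to combine the existence and partial regularity results above with a scale change that, in the limit $k\to\infty$, reduces the problem to the potential-free energy $P(\cdot)+\lambda\,\I_1(\cdot)$ studied in \cite{MurNovRuf}, whose unique minimizer under the area constraint is a ball as long as the coupling stays strictly below the critical threshold.

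\emph{Step 1: existence, rescaling, a priori bounds.} Since $\limsup_k\lambda_k/\lambda_c(m_k)<1$, we have $\lambda_k<\lambda_c(m_k)$ for $k$ large, so assertion (1) is immediate from Theorem~\ref{teoexistence}; fix a minimizer $\Om_k$. Set $t_k:=(\pi/m_k)^{1/2}\to+\infty$, and for a point $x_k$ to be chosen let $\widetilde\Om_k:=t_k(\Om_k-x_k)$, so that $|\widetilde\Om_k|=\pi$. Because $P$ scales like a length and $\I_1$ like the reciprocal of a length, $t_k E_{\lambda_k}(\Om_k)=P(\widetilde\Om_k)+\widetilde\lambda_k\,\I_1(\widetilde\Om_k)+\int_{\widetilde\Om_k}\widetilde g_k\,dy$ with $\widetilde\lambda_k:=t_k^2\lambda_k$ and $\widetilde g_k(y):=t_k^{-1}g(x_k+t_k^{-1}y)$; a direct computation using \eqref{eq:lamc} gives $\widetilde\lambda_k/\lambda_c(\pi)=\lambda_k/\lambda_c(m_k)$, so $\limsup_k\widetilde\lambda_k/\lambda_c(\pi)=:\theta<1$, and passing to a subsequence we may assume $\widetilde\lambda_k\to\widetilde\lambda_\infty\in[0,\theta\lambda_c(\pi)]$. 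Next, let $x^\ast\in\mathrm{argmin}\,g$ (which exists since $g\in\mathcal G$) and compare $\Om_k$ with the ball $B_k^\ast$ of mass $m_k$ centred at $x^\ast$: since $\lambda_k\le\lambda_c(m_k)$ we get $E_{\lambda_k}(\Om_k)\le E_{\lambda_k}(B_k^\ast)\le C\sqrt{m_k}$, whence $P(\Om_k)\le C\sqrt{m_k}$, $\I_1(\Om_k)\le Cm_k^{-1/2}$ and $\int_{\Om_k}g\le C\sqrt{m_k}$. Coercivity of $g$ then forces all but a vanishing fraction of $\Om_k$ into a fixed ball, so choosing $x_k\in\Om_k^+$ in that ball keeps $(x_k)$ bounded. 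After rescaling, $\widetilde\Om_k$ has uniformly bounded perimeter and $\I_1$, $|\widetilde\Om_k|=\pi$, and $\widetilde g_k\to0$ locally uniformly together with its gradient; moreover $\widetilde\Om_k$ minimizes over $\K_\pi$ the energy \eqref{problem} with parameters $(\pi,\widetilde\lambda_k,\widetilde g_k)$ in a compact range, so Theorem~\ref{teodensity} applies uniformly in $k$: the $\widetilde\Om_k^+$ obey density estimates with a fixed radius $\widetilde r_0>0$ and have a uniformly bounded number of indecomposable components.

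\emph{Step 2: identification of the limit.} By the uniform perimeter bound and density estimates, along a further subsequence $\widetilde\Om_k\to\widetilde\Om_\infty$ in $L^1_{\rm loc}$ with $\widetilde\Om_\infty\in\K$ and $|\widetilde\Om_\infty|\le\pi$. Using the ball comparison of Step~1, $P(\widetilde\Om_k)+\widetilde\lambda_k\I_1(\widetilde\Om_k)=t_kE_{\lambda_k}(\Om_k)-\int_{\widetilde\Om_k}\widetilde g_k\le \big(2\pi+\tfrac{\pi}{2}\widetilde\lambda_k\big)+o(1)$, i.e.\ the rescaled sets are asymptotically minimizing for the potential-free energy with coupling $\widetilde\lambda_\infty$ (the error terms from $\widetilde g_k$ and from $\widetilde\lambda_k\neq\widetilde\lambda_\infty$ being $o(1)$ by the bounds above). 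The crucial point is to exclude loss of mass at infinity: if $|\widetilde\Om_\infty|<\pi$, adding to $\widetilde\Om_\infty$ a far-away ball carrying the missing mass produces a set in $\K_\pi$ which is not a ball, hence has potential-free energy strictly above the minimum $2\pi+\tfrac{\pi}{2}\widetilde\lambda_\infty$; by concentration–compactness and the sharp relaxation formula of Theorem~\ref{teorel} (together with the note following it), $\liminf_k\big(P(\widetilde\Om_k)+\widetilde\lambda_k\I_1(\widetilde\Om_k)\big)$ would be at least this strictly larger value, contradicting the previous display. Hence $|\widetilde\Om_\infty|=\pi$, the convergence is in $L^1$, and lower semicontinuity of $P+\widetilde\lambda_\infty\I_1$ on $\K_\pi$ (Theorem~\ref{teorel}, valid because $\widetilde\lambda_\infty<\lambda_c(\pi)$) together with $\widetilde g_k\to0$ shows $\widetilde\Om_\infty$ minimizes the potential-free energy over $\K_\pi$; by \cite{MurNovRuf} it is a ball of mass $\pi$. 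Re-centring $x_k$ gives $\widetilde\Om_k\to B_1(0)$ in $L^1$, which the uniform density estimates upgrade to convergence in Hausdorff distance (of the sets, their boundaries and their complements). Since every subsequence yields the same limit, the full sequence converges, which is (2).

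\emph{Step 3: location of the limit.} Let $x_0$ be a cluster point of $(x_k)$ and pass to $x_k\to x_0$. By (2), $\diam(\Om_k)=t_k^{-1}\diam(\widetilde\Om_k)\to0$, so $\Om_k$ collapses to the point $x_0$. Comparing once more $E_{\lambda_k}(\Om_k)\le E_{\lambda_k}(B_k^\ast)$ and using that for $\lambda_k\le\lambda_c(m_k)$ the ball of mass $m_k$ is the unique minimizer of the potential-free energy over $\K_{m_k}$ \cite{MurNovRuf}, so that $P(\Om_k)+\lambda_k\I_1(\Om_k)\ge P(B_k^\ast)+\lambda_k\I_1(B_k^\ast)$, we are left with $\int_{\Om_k}g\le\int_{B_k^\ast}g$. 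Dividing by $m_k$ and letting $k\to\infty$, continuity of $g$ gives $g(x_0)\le\min g$, hence $x_0\in\mathrm{argmin}\,g$, proving (3). The main obstacle is Step~2, namely ruling out escape of mass for the rescaled minimizers; this is exactly where the hypothesis $\limsup\lambda_k/\lambda_c(m_k)<1$ enters, through the sharp relaxation formula of Theorem~\ref{teorel} which quantifies the cost of sending charge to infinity, and it hinges on the bookkeeping of Step~1 ensuring that the rescaled potentials $\widetilde g_k$ are uniformly controlled so that the density estimates transfer with a radius bounded away from zero and the limiting problem is the well-posed potential-free one.
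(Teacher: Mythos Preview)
Your Steps 1 and 3 are essentially correct; in particular your Step 3 (compare $\Om_k$ with a ball $B_k^\ast$ centred at a minimizer of $g$, subtract the potential-free part via Lemma \ref{lemtom}, divide by $m_k$ and pass to the limit) is a clean variant of what the paper does. The difficulty is Step 2.

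The gap is in excluding loss of mass for the rescaled sets $\widetilde\Om_k$. You write that if $|\widetilde\Om_\infty|<\pi$ then, ``by concentration--compactness and the sharp relaxation formula of Theorem \ref{teorel}'', the liminf of $P(\widetilde\Om_k)+\widetilde\lambda_k\I_1(\widetilde\Om_k)$ would dominate the energy of $\widetilde\Om_\infty$ together with a far-away ball. But Theorem \ref{teorel} computes the $L^1$-relaxation of $E_\lambda$ on $\K_m$ for a \emph{fixed} mass $m$, and its interpretation concerns \emph{charge} escaping to infinity, not \emph{mass}. It gives no lower bound along sequences that lose volume. To run your argument you would need a splitting inequality of the type $P(A\cup B)+\lambda\I_1(A\cup B)\ge \big(P(A)+\lambda' \I_1(A)\big)+\big(P(B)+\lambda'' \I_1(B)\big)$ with an appropriate distribution of the coupling; Lemma \ref{lemdue} only yields $\I_1(A\cup B)\ge \min_t\big(t^2\I_1(A)+(1-t)^2\I_1(B)\big)$, which does not combine with the perimeter to produce the ball-plus-ball lower bound you claim. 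As written, the concentration--compactness step is not justified.

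The paper bypasses this entirely with a short direct argument that you are missing. From the comparison with $B_1(0)$ one has
\[
P(\widetilde\Om_k)+\tfrac{\lambda_k\pi}{m_k}\,\I_1(\widetilde\Om_k)\le P(B_1(0))+\tfrac{\lambda_k\pi}{m_k}\,\I_1(B_1(0))+O(m_k),
\]
while applying Lemma \ref{lemtom} at the \emph{critical} value $\lambda=\lambda_c(\pi)=4$ (not at $\widetilde\lambda_k$) gives
\[
\I_1(B_1(0))-\I_1(\widetilde\Om_k)\le \tfrac14\big(P(\widetilde\Om_k)-P(B_1(0))\big).
\]
Substituting the second into the first yields
\[
\Big(1-\tfrac{\lambda_k\pi}{4m_k}\Big)\big(P(\widetilde\Om_k)-P(B_1(0))\big)\le O(m_k),
\]
and since $\limsup_k\lambda_k\pi/(4m_k)<1$ the isoperimetric deficit of $\widetilde\Om_k$ tends to zero. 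The quantitative isoperimetric inequality \cite{fumapa} then produces points $\tilde x_k$ with $|\widetilde\Om_k\Delta B_1(\tilde x_k)|\to 0$ directly, with no compactness or mass-tracking needed; Hausdorff convergence follows from the uniform density estimates, as you observe. The key idea your proof is missing is this use of Lemma \ref{lemtom} at the critical coupling to convert the $\I_1$-deficit into a perimeter deficit, reducing everything to stability of the isoperimetric inequality.
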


     We note that in the local setting, i.e., when $\lambda = 0$, the
     result in Theorem \ref{teoasy} was obtained by Figalli and Maggi
     in \cite{FigMag}, who in fact also obtained strong quantitiative
     estimates of the rate of convergence of these minimizers to balls
     in this perimeter-dominated regime. This is made possible in the
     context of local isoperimetric problems with confining potentials
     by an extensive use of the regularity theory available for such
     problems \cite{maggi}. In contrast, minimizers of our problem
     fail to be quasi-minimizers of the perimeter and, therefore,
     their $C^{1,\alpha}$-regularity is a difficult open question. The
     proof of Theorem \ref{teoasy}, which extends some results of
     \cite{FigMag} to the nonlocal setting involving capacitary
     energies relies on the arguments used to obtain partial
     regularity of the minimizers in the subcritical regime in Theorem
     \ref{teodensity}. These estimates are also the first step towards
     the full regularity theory of the minimizers of $E_\lambda$.

Finally,  we derive the Euler--Lagrange equation for the energy
$E_\lambda$ under some smoothness assumptions on the shape of the
minimizer. The main issue here is to compute the first variation of
the functional $\I_1(\Omega)$ with respect to the deformations of the
set $\Omega$. To that end, given a compact set $\Omega$ with a
sufficiently smooth boundary, we introduce the potential function
\begin{equation}
  \label{vOmdef}
  v_\Om(x) := \int_\Omega {d \mu_\Om(y) \over |x - y|}, 
\end{equation}
where $\mu_\Om$ is the equilibrium measure of $\Om$ minimizing
$\mathcal I_1$.  The normal $\frac12$-derivative of the potential of
$v_\Omega$ at the boundary of $\Omega$ is then defined as
\begin{equation}\label{derivatafrazionaria}
  \partial^{1/2}_{\nu}v_\Om(x):=\lim_{s\to0^+}\frac{v_\Om(x+s\nu(x)) -
    v_\Omega(x)}{s^{1/2}},
\end{equation}
where $x \in \partial \Omega$ and $\nu(x)$ is the outward normal
vector to $\partial\Om$ at $x$.  

\begin{theorem}\label{teocinque}
  Let $\Omega$ be a compact set with boundary of class $C^2$, let
  $\zeta \in C^\infty(\R^2, \R^2)$, and let $(\Phi_t)_{t \in \R}$ be a
  smooth family of diffeomorphisms of the plane satisfying
  $\Phi_0 = \mathrm{Id}$ and
  $\left. {d \over dt} \Phi_t \right|_{t=0} = \zeta$. Then the normal
  $\frac 12$-derivative $\partial^{1/2}_\nu v_\Om$ of the potential
  $v_\Omega$ from \eqref{vOmdef} is well-defined and belongs to
  $C^\beta(\partial\Omega)$ for any $\beta\in (0,1/2)$.  Moreover, we
  have
	\begin{equation}
          \left. \frac{d}{dt}{\mathcal
              I}_1(\Phi_t(\Omega)) \right|_{t=0} = 
          -\frac{1}{8}\,\int_{\partial\Omega}(\partial^{1/2}_\nu
          v_\Omega(x))^2\zeta(x)\cdot \nu(x)\,d\mathcal H^{1}(x). 
	\end{equation}
	As a consequence, the Euler--Lagrange equation for a
          critical point $\Omega \in \mathcal A_m$ of $E_\lambda$
          satisfying the above smoothness conditions is
	\begin{equation}\label{eqEL}
\kappa - \frac{\lambda}{8} \, (\partial^{1/2}_\nu v_\Omega)^2 + g= p
\qquad {\rm on}\ \partial\Om,
\end{equation}
where $\kappa$ is the curvature of $\partial \Om$ (positive if
  $\Omega$ is convex) and $p\in\R$ is a Lagrange multiplier due to
the mass constraint.
\end{theorem}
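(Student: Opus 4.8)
The plan is to establish the shape-derivative formula for $\mathcal I_1$ first, and then read off both the regularity of $\partial^{1/2}_\nu v_\Omega$ and the Euler--Lagrange equation. Write $\Omega_t:=\Phi_t(\Omega)$. Transporting the minimization defining $\mathcal I_1$ by $\mu\mapsto(\Phi_t)_\#\mu$ gives
\[
  \mathcal I_1(\Omega_t)=\min_{\nu\in\mathcal P(\Omega)}\iint_{\Omega\times\Omega}\frac{d\nu(x)\,d\nu(y)}{|\Phi_t(x)-\Phi_t(y)|}.
\]
Using $\mu_\Omega$ as a competitor bounds $\mathcal I_1(\Omega_t)-\mathcal I_1(\Omega)$ from above, and using $(\Phi_t^{-1})_\#\mu_{\Omega_t}\in\mathcal P(\Omega)$ as a competitor for $\mathcal I_1(\Omega)=\mathcal I_1(\Phi_t^{-1}(\Omega_t))$ bounds it from below. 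Dividing by $t$, letting $t\to0$, and passing to the limit in the resulting double integrals — legitimate by the standard stability of the Riesz equilibrium problem, since the difference quotients are dominated by $C/|x-y|$ and the equilibrium measures $\mu_{\Omega_t}$ converge to $\mu_\Omega$ with uniformly bounded energy — yields
\[
  \left.\frac{d}{dt}\mathcal I_1(\Omega_t)\right|_{t=0}=-\iint_{\Omega\times\Omega}\frac{(x-y)\cdot(\zeta(x)-\zeta(y))}{|x-y|^3}\,d\mu_\Omega(x)\,d\mu_\Omega(y),
\]
an \emph{absolutely} convergent integral, since $|\zeta(x)-\zeta(y)|\lesssim|x-y|$ makes the kernel $|x-y|^{-1}$-dominated. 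It is essential not to split this into two single integrals of $\zeta\cdot\nabla v_\Omega$: each piece diverges, reflecting the fact that $v_\Omega$ is constant (hence smooth) in the interior of $\Omega$ but only $1/2$-H\"older up to $\partial\Omega$ from outside, so the entire variation is concentrated at the boundary.

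The second ingredient is the boundary behaviour of the equilibrium data. Since $\partial\Omega\in C^2$, the equilibrium measure is absolutely continuous, $d\mu_\Omega=\rho_\Omega\,dx$, with $\rho_\Omega(x)=a(\pi(x))\,d(x)^{-1/2}+o(d(x)^{-1/2})$ as $d(x):=\mathrm{dist}(x,\partial\Omega)\to0$ inside $\Omega$ (with $\pi(x)$ the nearest boundary point), and correspondingly
\[
  v_\Omega(x_0+s\nu(x_0))=\mathcal I_1(\Omega)-b(x_0)\sqrt s+o(\sqrt s),\qquad s\to0^+,
\]
so that $\partial^{1/2}_\nu v_\Omega(x_0)=-b(x_0)$ is well-defined and $b=c\,a$ for a universal constant $c>0$. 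I would obtain this expansion — and the H\"older regularity $b\in C^\beta(\partial\Omega)$, $\beta\in(0,1/2)$, which is the first assertion of the theorem — from the Caffarelli--Silvestre extension $V(x,z):=\int_\Omega(|x-y|^2+z^2)^{-1/2}\,d\mu_\Omega(y)$, which is harmonic in $\mathbb R^3_+$, equals $\mathcal I_1(\Omega)$ on $\overline\Omega\times\{0\}$, has vanishing normal derivative on $(\mathbb R^2\setminus\overline\Omega)\times\{0\}$, and satisfies $\mathcal I_1(\Omega)=c_0^{-1}\int_{\mathbb R^3_+}|\nabla V|^2$. Near the $C^2$ interface $\partial\Omega\times\{0\}$ this is a mixed Dirichlet--Neumann problem; in the plane perpendicular to the interface the homogeneous-degree-$1/2$ harmonic profile $r^{1/2}\cos(\theta/2)$ forces the $\sqrt{\cdot}$ expansion above, with leading coefficient inheriting H\"older regularity from $\partial\Omega$ by standard Schauder estimates — equivalently, by the regularity theory for the thin obstacle problem.

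With these asymptotics in hand, the main work is to convert the double integral into the boundary integral $\tfrac18\int_{\partial\Omega}(\partial^{1/2}_\nu v_\Omega)^2\,\zeta\cdot\nu\,d\mathcal H^1$, and this is the step I expect to be the real obstacle. I would insert a diagonal cut-off, replacing the kernel by $\frac{(x-y)\cdot(\zeta(x)-\zeta(y))}{|x-y|^3}\mathbf 1_{\{|x-y|>\varepsilon\}}$; for the truncated form the splitting into single integrals is legitimate, and by absolute integrability the correction as $\varepsilon\to0$ localizes to the set where $d(x)\lesssim\varepsilon$. Flattening $\partial\Omega$ with a partition of unity and substituting the leading profiles $\rho_\Omega\sim a\,d^{-1/2}$ and $v_\Omega\sim\mathcal I_1(\Omega)-b\sqrt s$ reduces everything to a single model computation on a half-plane — the Riesz potential of $a\,d^{-1/2}$ near its flat boundary — which fixes both the relation $b=c\,a$ and the numerical constant $\tfrac18$. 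The same constant can be produced without the cut-off by transporting the identity $\mathcal I_1(\Omega)=c_0^{-1}\int_{\mathbb R^3_+}|\nabla V|^2$ to the fixed half-space along the flow of the horizontal field $(x,z)\mapsto(\zeta(x),0)$ and invoking the Hadamard variation formula for the Dirichlet energy of the screen $\overline\Omega\times\{0\}$, whose edge contribution is governed by the square of the coefficient of the $\sqrt{\cdot}$-singularity, i.e.\ by $b^2=(\partial^{1/2}_\nu v_\Omega)^2$. Either route requires this delicate local analysis; everything else is soft.

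Finally, the Euler--Lagrange equation is immediate once the variation of $\mathcal I_1$ is known. Combining it with the classical first variations $\frac{d}{dt}P(\Phi_t\Omega)\big|_{t=0}=\int_{\partial\Omega}\kappa\,\zeta\cdot\nu\,d\mathcal H^1$, $\frac{d}{dt}\int_{\Phi_t\Omega}g\,dx\big|_{t=0}=\int_{\partial\Omega}g\,\zeta\cdot\nu\,d\mathcal H^1$ and $\frac{d}{dt}|\Phi_t\Omega|\big|_{t=0}=\int_{\partial\Omega}\zeta\cdot\nu\,d\mathcal H^1$, the stationarity $\frac{d}{dt}E_\lambda(\Phi_t\Omega)\big|_{t=0}=0$ for all $\zeta$ with $\int_{\partial\Omega}\zeta\cdot\nu\,d\mathcal H^1=0$ forces $\kappa-\tfrac\lambda8(\partial^{1/2}_\nu v_\Omega)^2+g$ to be constant on $\partial\Omega$, which is \eqref{eqEL} with $p$ the Lagrange multiplier of the area constraint.
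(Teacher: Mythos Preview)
Your proposal is a reasonable plan and reaches the right answer, but it follows a genuinely different route from the paper's. The paper never works with the equilibrium measure $\mu_\Omega$ or the double integral you derive. Instead it passes through the duality $\mathcal I_1(\Omega)=2\pi/\mathrm{cap}_1(\Omega)$ and reduces the computation to the shape derivative of an auxiliary \emph{exterior} Dirichlet energy: choosing a fixed cut-off $\varphi\in C^\infty_c(\R^2)$ with $\varphi\equiv1$ near $\Omega$ and setting $f:=-(-\Delta)^{1/2}\varphi$, one has $\tfrac12\,\mathrm{cap}_1(\Omega)=\tfrac12\|\varphi\|^2_{\mathring H^{1/2}}+J_f(\Omega^c)$, where $J_f(U)=\min\{\tfrac12\|v\|^2_{\mathring H^{1/2}}-\int fv:\ v|_{U^c}=0\}$. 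The minimizer solves a fractional Dirichlet problem on $\Omega^c$, so the Ros-Oton--Serra boundary regularity applies directly and yields the $\sqrt{\mathrm{dist}}$ expansion and the $C^\beta$ regularity of $\partial^{1/2}_\nu u_\Omega$ without invoking the extension or thin-obstacle theory. The derivative of $J_f$ is then obtained by writing $J_f(\Omega_t^c)-J_f(\Omega^c)$ via the equation, isolating an integral over the thin strip $\Omega\setminus\Omega_t$, and performing an explicit near-field computation in normal coordinates that produces the constant $\pi/8$ as a concrete one-dimensional integral; this is close in spirit to your cut-off/local-analysis route, but carried out on the potential side rather than the measure side.

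What each approach buys: the paper's detour through $\mathrm{cap}_1$ and the exterior Dirichlet problem sidesteps entirely the step you flag as the ``real obstacle'' --- converting your absolutely convergent double integral into a boundary integral --- and replaces it with a computation that is already localized at $\partial\Omega$ from the start, since the energy difference lives on the strip $\Omega\setminus\Omega_t$. Your route is conceptually more direct (it differentiates the object $\mathcal I_1$ itself) and your extension picture gives a transparent reason for the $\sqrt{s}$ asymptotics, but the price is exactly the delocalization you identify: the double integral is supported on all of $\Omega\times\Omega$, yet its value is a pure boundary quantity, and extracting this requires either the half-plane model computation or the Hadamard argument you sketch (the latter being essentially the Dalibard--G\'erard-Varet approach that the paper cites but aims to streamline). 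Both routes are valid; the paper's is shorter because it chooses variables in which the variation is born localized. The final Euler--Lagrange step is identical in both.
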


We note that the result in Theorem \ref{teocinque} relies on recent
regularity estimates for fractional elliptic PDEs obtained in
\cite{ro,ro-s,DS}. It is also closely related to the result of
Dalibard and G\'erard-Varet \cite{DGV13} on the shape derivative of a
fractional shape optimization problem.

\section{Preliminaries: capacitary estimates, perimeters and connected components}\label{preliminari}

In this section we give some preliminary definitions and results
about the functionals $\I_1$ and $P$ that define $\E_\lambda$. We
begin with an important remark about the necessity of introducing the
classes $\K_m$ and $\S_m$.

\begin{remark}{\rm As mentioned in the Introduction, we have to choose
    carefully the admissible class for the minimization of
    $E_\lambda$. A natural choice would be minimizing $E_\lambda$ in
    the class of finite perimeter sets.  However, in this class the
    functional $E_\lambda$ is never lower semicontinuous.  Indeed,
    given a set $\Omega\subset\R^2$ and $\eps>0$ it is possible to
    find another set $\Om_\eps$, with $|\Om\Delta\Om_\eps|=0$ and
    $P(\Om)=P(\Om_\eps)$, but with $\I_1(\Om_\eps)<\eps$ (see the
    Introduction in \cite{MurNovRuf}).  Such a construction cannot be
    accomplished in $\K_m$. In this sense, $\K_m$ is the largest class
    in which it is meaningful to consider the minimization of
    $E_\lambda$.  }
\end{remark}

In \cite[Theorems 1 and 2]{MurNovRuf} uniform bounds on
$E_\lambda(\Om)$ were proved for $g=0$, which are attained on balls.
These estimates will play a crucial role in the proof of Theorem
$\ref{teorel}$, and we recall them in the following lemma.

\begin{lemma}\label{lemtom}
	For any $\Om\in\K_m$ there holds
	\begin{equation}\label{universal}
	\mathcal H^1(\partial\Om)+\lambda\I_1(\Om)\ge 2\pi\sqrt\lambda.
	\end{equation}
	Moreover, if $\lambda\le\lambda_c(m)$ there also holds
        \begin{align}
          \label{eq:HIHIB}
          \mathcal H^1(\partial\Om)+\lambda\I_1(\Om)\ge \mathcal
          H^1(\partial
          B_r(x_0))+\lambda\I_1(B_r(x_0)),          
        \end{align}
	where $r = \sqrt{m/\pi}$ and $x_0 \in \R^2$, i.e.,
          $B_r(x_0)$ is a ball of measure $m$, and the equality holds
        if and only of $\Om=B_r(x_0)$ for some $x_0 \in \R^2$.
\end{lemma}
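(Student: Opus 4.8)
The plan is to reduce everything to a sharp lower bound on $\mathcal H^1(\partial\Om)+\lambda\I_1(\Om)$ in terms of $\I_1(\Om)$ alone, and then optimize. First I would use the known isodiametric-type control on the $1$-Riesz energy: for a compact set $\Om\subset\R^2$ one has $\I_1(\Om)\ge \I_1(B)$ where $B$ is a disc of the same diameter, and more quantitatively $\I_1(\Om)\ge c/\diam(\Om)$ for an explicit $c$; combined with the elementary bound $\mathcal H^1(\partial\Om)\ge 2\diam(\Om)$ (since the boundary of a compact connected-hull set must ``go around'' a diameter), this already gives $\mathcal H^1(\partial\Om)+\lambda\I_1(\Om)\ge 2\diam(\Om)+\lambda c/\diam(\Om)$. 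Minimizing the right-hand side over $\diam(\Om)>0$ by AM--GM yields exactly $2\sqrt{2\lambda c}$, and with the correct constant $c=\pi^2/2$ for the disc (so that $\I_1(B_\rho)=\pi/(2\rho)\cdot\ldots$, i.e. $\I_1$ of a disc of diameter $d$ equals $\pi/d$ up to the right normalization) this is precisely $2\pi\sqrt\lambda$. This proves \eqref{universal}. The one subtlety is that $\Om$ need not be connected; but since $\I_1(\Om)$ is the capacity of the whole set, replacing $\Om$ by any one of its components only increases $\I_1$ and does not increase the perimeter contribution we are lower-bounding, so the reduction to the connected (indeed, to the convex-hull) case is legitimate — this is where the two-dimensional convex-body facts from \cite{MurNovRuf} enter.

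For the second, refined inequality \eqref{eq:HIHIB} under the subcriticality hypothesis $\lambda\le\lambda_c(m)$, I would quote directly \cite[Theorems 1 and 2]{MurNovRuf}: those results assert exactly that, among all $\Om\in\K_m$, the ball of measure $m$ minimizes $\mathcal H^1(\partial\Om)+\lambda\I_1(\Om)$ precisely in the regime $\lambda\le\lambda_c(m)$, with equality characterizing balls. So the content of \eqref{eq:HIHIB} is a verbatim restatement of the main theorem of the companion paper, and the only thing to verify is the bookkeeping that $\lambda_c(m)=4m/\pi$ matches the threshold there (it does, by the explicit computation $P(B_r)=2\pi r$, $\I_1(B_r)=\pi/(2r)$ with $r=\sqrt{m/\pi}$: one checks $\tfrac{d}{d\lambda}$ of the ball energy crosses that of competitors exactly at $\lambda_c$).

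The step I expect to be the genuine obstacle is not \eqref{eq:HIHIB} — which is imported wholesale — but making \eqref{universal} fully rigorous in the non-smooth, possibly disconnected class $\K_m$, in particular justifying $\I_1(\Om)\ge\I_1(\mathrm{conv}(\Om))$-type monotonicity cleanly and pinning down the extremal constant so that the AM--GM optimum lands exactly on $2\pi\sqrt\lambda$ with the stated rigidity. Since the lemma is explicitly credited to \cite{MurNovRuf}, the cleanest route is simply to state that \eqref{universal} is \cite[Theorem 1]{MurNovRuf} and \eqref{eq:HIHIB} is \cite[Theorem 2]{MurNovRuf}, and to spend the proof only on translating their notation (balls of prescribed measure, the normalization of $\I_1$, the identification $\lambda_c(m)=4m/\pi$) into the present one; the AM--GM heuristic above is then just the sanity check that the constants are consistent.
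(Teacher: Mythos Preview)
Your final recommendation --- simply cite \cite[Theorems 1 and 2]{MurNovRuf} and do the bookkeeping on $\lambda_c(m)$ --- is exactly what the paper does: the lemma is stated there with no proof, only the reference. So on that count you match the paper.

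However, your ``sanity check'' heuristic for \eqref{universal} is actually wrong, not merely informal. With $d:=\diam(\Om)$ you claim $\I_1(\Om)\ge c/d$ with $c=\pi^2/2$, but this fails already for the ball: $\I_1(B_r)=\pi/(2r)$ and $\diam(B_r)=2r$ give $c=\pi$, not $\pi^2/2$. Worse, the isodiametric-type statement ``$\I_1(\Om)\ge\I_1(B)$ for $B$ a disc of the same diameter'' is not true in general (a set of diameter $d$ need not lie in any disc of diameter $d$; think of an equilateral triangle of side $d$), so even $c=\pi$ is unjustified. The safe containment $\Om\subset B_d(x)$ for $x\in\Om$ yields only $\I_1(\Om)\ge\pi/(2d)$, and then AM--GM on $2d+\lambda\pi/(2d)$ gives $2\sqrt{\pi\lambda}$, short of $2\pi\sqrt\lambda$ by a factor $\sqrt\pi$.

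The sharp ingredient that makes the AM--GM land on $2\pi\sqrt\lambda$ is not a diameter bound but the product inequality $P(\Om)\,\I_1(\Om)\ge\pi^2$ (balls are extremal), which is \cite[Corollary 3.2]{NovRuf} and is in fact invoked elsewhere in the present paper. From $\mathcal H^1(\partial\Om)\ge P(\Om)$ and AM--GM one gets
\[
\mathcal H^1(\partial\Om)+\lambda\I_1(\Om)\ \ge\ 2\sqrt{\lambda\,P(\Om)\I_1(\Om)}\ \ge\ 2\pi\sqrt\lambda,
\]
with the correct equality case. So if you want to include an actual argument rather than a bare citation, replace the diameter heuristic by this one-line use of $P\cdot\I_1\ge\pi^2$.
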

We now recall some basic facts about the functional $\I_1$.

\begin{lemma}{\cite[Lemma 1]{MurNovRuf}}
	\label{l:optm}
	Let $\Omega\subset\R^2$ be a compact set such that
          $|\Omega| > 0$ and $\mathcal H^1(\partial \Omega)<+\infty$.
        Then there exists a unique probability measure $\mu$ {over
          $\R^2$} supported on $\Omega$ such that
	\begin{align}
	\I_1(\Omega) = \int_\Omega \int_\Omega {d \mu(x)  d \mu(y) \over
		|x - y|}.
	\end{align}
	Furthermore, $\mu(\partial \Omega) = 0$, and we have
	$d \mu(x) = \rho(x) dx$ for some
	$\rho \in L^1(\Omega)$ satisfying
	$0 < \rho(x) \leq C / \mathrm{dist} \, (x, \partial \Omega)$ for
	some constant $C > 0$ and all $x \in \mathrm{int}(\Omega)$.
\end{lemma}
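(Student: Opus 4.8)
The plan is to obtain existence and uniqueness from the convexity of the Riesz energy, to rule out charge on $\partial\Om$ by a capacity estimate, and then to read off absolute continuity and the density bound from the harmonic extension of the equilibrium potential; the last point will be the main obstacle.

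\textbf{Existence, finiteness, uniqueness.} Since $|x|^{-1}\in L^1_{\mathrm{loc}}(\R^2)$ and $\Om$ is compact, the uniform probability measure $dx/|\Om|$ on $\Om$ has finite energy, so $\I_1(\Om)<+\infty$. As $\mathcal P(\Om)$ is sequentially weak-$*$ compact and $\mu\mapsto\iint|x-y|^{-1}\,d\mu(x)\,d\mu(y)$ is weak-$*$ lower semicontinuous (the kernel being lower semicontinuous and bounded below, and $\mu_n\rightharpoonup\mu$ forcing $\mu_n\otimes\mu_n\rightharpoonup\mu\otimes\mu$), a minimiser $\mu$ exists. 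For uniqueness I would use the strict positive definiteness of the $1$-Riesz kernel on compactly supported finite-energy signed measures, $\iint|x-y|^{-1}\,d\sigma(x)\,d\sigma(y)\ge0$ with equality only for $\sigma=0$ \cite{landkof}: if $\mu_1,\mu_2$ both minimise, then with $E(\cdot,\cdot)$ the bilinear energy and $E(\mu_1,\mu_1)=E(\mu_2,\mu_2)=\I_1(\Om)$ the parallelogram identity gives $\I_1(\Om)\le E\big(\tfrac{\mu_1+\mu_2}{2},\tfrac{\mu_1+\mu_2}{2}\big)=\I_1(\Om)-\tfrac14 E(\mu_1-\mu_2,\mu_1-\mu_2)$, whence $E(\mu_1-\mu_2,\mu_1-\mu_2)\le0$ and $\mu_1=\mu_2$.

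\textbf{No charge on $\partial\Om$, and the Frostman conditions.} Since $\mathcal H^1(\partial\Om)<+\infty$ and the relevant Riesz exponent is $\alpha=1=N-1$ with $N=2$, the set $\partial\Om$ has zero $1$-Riesz capacity: a finite-energy probability measure $\nu$ satisfies $\nu(B_r(x))=o(r)$ for $\nu$-a.e.\ $x$, and with $\mathcal H^1(\partial\Om)<+\infty$ the density comparison theorem forces $\nu$ restricted to $\partial\Om$ to vanish \cite{landkof}. If $\mu(\partial\Om)>0$, the normalisation of $\mu$ restricted to $\partial\Om$ would be such a measure, of energy at most $\I_1(\Om)\mu(\partial\Om)^{-2}<+\infty$, a contradiction; hence $\mu(\partial\Om)=0$ and $\mu$ is concentrated on $\mathrm{int}(\Om)$. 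Writing $u_\mu(x):=\int|x-y|^{-1}\,d\mu(y)$, testing minimality against competitors $(1-\eps)\mu+\eps\nu$ shows that $\{x\in\Om:u_\mu(x)<\I_1(\Om)\}$ carries no finite-energy probability measure, i.e.\ $u_\mu\ge\I_1(\Om)$ quasi-everywhere on $\Om$; then $\int u_\mu\,d\mu=\I_1(\Om)$ gives $u_\mu=\I_1(\Om)$ $\mu$-a.e., lower semicontinuity of $u_\mu$ gives $u_\mu\le\I_1(\Om)$ on $\mathrm{supp}\,\mu$, and the maximum principle for $\alpha$-Riesz potentials with $\alpha\le2$ \cite{landkof} gives $u_\mu\le\I_1(\Om)$ on all of $\R^2$. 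Thus $0\le u_\mu\le\I_1(\Om)$ on $\R^2$ and $u_\mu\equiv\I_1(\Om)$ a.e.\ on $\mathrm{int}(\Om)$.

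\textbf{Absolute continuity and the density bound --- the main obstacle.} Up to a positive constant $u_\mu=(-\Delta)^{-1/2}\mu$, and since $\mu$ has finite energy $u_\mu\in\dot H^{1/2}(\R^2)$; for $s=\tfrac12$ its Caffarelli--Silvestre extension is simply the bounded harmonic function $U$ on $\R^3_+=\R^2\times(0,+\infty)$ with $U(\cdot,0)=u_\mu$ and $-\partial_t U(\cdot,0)=c_*\mu$ for some $c_*>0$. Using $0\le u_\mu\le\I_1(\Om)$ and $u_\mu\equiv\I_1(\Om)$ a.e.\ on the open set $\mathrm{int}(\Om)$, the Poisson integral defining $U$ extends it continuously up to $\mathrm{int}(\Om)\times\{0\}$ with value $\I_1(\Om)$ there, so $V:=U-\I_1(\Om)$ is harmonic and nonpositive in $\R^3_+$ (by the maximum principle, since $u_\mu\le\I_1(\Om)$ on $\{t=0\}$ and $u_\mu\to0$ at infinity), $V\equiv0$ on $\mathrm{int}(\Om)\times\{0\}$, and $V<0$ in $\R^3_+$ by the strong maximum principle. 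Schwarz reflection across $\mathrm{int}(\Om)\times\{0\}$ then extends $V$ harmonically to $\mathrm{int}(\Om)\times\R$, so $\partial_t V$ is harmonic there and $\mu=-c_*^{-1}\partial_t V(\cdot,0)$ is absolutely continuous on $\mathrm{int}(\Om)$ with real-analytic density $\rho$, strictly positive by the Hopf lemma; combined with $\mu(\partial\Om)=0$ this yields $d\mu=\rho\,dx$ with $\rho\in L^1(\Om)$ since $\int_\Om\rho=1$. For the quantitative bound, fixing $x\in\mathrm{int}(\Om)$ with $d:=\mathrm{dist}(x,\partial\Om)$, the reflected $V$ is harmonic on the cylinder $B_d(x)\times(-d,d)$ where $|V|\le\I_1(\Om)$, so the interior gradient estimate for harmonic functions gives $|\partial_t V(x,0)|\le C_0 d^{-1}\I_1(\Om)$, i.e.\ $\rho(x)\le C/\mathrm{dist}(x,\partial\Om)$ with $C=c_*^{-1}C_0\,\I_1(\Om)$. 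The genuinely delicate ingredients are the identification of the Neumann trace of $U$ with a multiple of $\mu$, the boundary behaviour of $U$ on $\mathrm{int}(\Om)\times\{0\}$, and --- throughout --- the careful bookkeeping of ``quasi-everywhere'' versus ``almost everywhere'' versus ``everywhere'' for $u_\mu$, above all the maximum principle that makes $u_\mu$ bounded and hence makes the reflection and gradient estimates available; all of these are classical potential theory \cite{landkof}.
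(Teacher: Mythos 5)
This paper does not actually prove Lemma \ref{l:optm}; it is imported verbatim from \cite[Lemma 1]{MurNovRuf}, so there is no internal proof to compare against. Your argument is correct and proceeds along exactly the classical potential-theoretic lines that the cited source relies on: Landkof's theory for existence, uniqueness and the Frostman conditions, zero $\tfrac12$-capacity of sets with finite $\mathcal H^1$ measure to get $\mu(\partial\Omega)=0$, and then the harmonic extension to $\R^3_+$ with odd reflection across $\mathrm{int}(\Omega)\times\{0\}$, the Hopf lemma and interior gradient estimates to obtain $0<\rho(x)\le C/\mathrm{dist}(x,\partial\Omega)$. The only steps you leave implicit are routine and correctly flagged: that zero-capacity sets are Lebesgue-null (to upgrade ``quasi-everywhere'' to ``a.e.'' on $\mathrm{int}(\Omega)$ before taking the Poisson extension), and the approximate-identity computation identifying the distributional Neumann trace $-\partial_t U(\cdot,0)$ with $2\pi\mu$.
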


Another useful estimate is the following:

\begin{lemma}{\cite[Lemma 2]{MurNovRuf}}\label{lemdue}
  Let $\Omega_1,\Omega_2\subset\R^2$ be compact sets with positive
  measure such that $\mathcal H^1(\partial \Omega_i)<+\infty$ for
  $i\in \{1,2\}$, and $|\Omega_1\cap \Omega_2|=0$.  Then, for all
  $t\in [0,1]$ there holds
	\begin{align}
	\I_1(\Omega_1\cup\Omega_2) \le t^2 \I_1
	(\Omega_1) + (1-t)^2 \I_1(\Omega_2)
	+ \frac{2\,t\,(1-t)}{{\rm dist}(\Om_1,\Om_2)}\,,
	\end{align}
	and there exists $\bar t\in (0,1)$ such that 
	\begin{align}
	\I_1(\Omega_1\cup\Omega_2) > \bar t^2 \I_1(\Omega_1) + (1-\bar t)^2
	\I_1(\Omega_2).
	\end{align}
\end{lemma}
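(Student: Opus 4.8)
The plan is to obtain the upper bound by using the convex combination $t\mu_1+(1-t)\mu_2$ of the two equilibrium measures as a competitor in the problem defining $\I_1(\Omega_1\cup\Omega_2)$, and to obtain the strict lower bound by splitting the equilibrium measure of the union $\Omega:=\Omega_1\cup\Omega_2$ according to its restrictions to $\Omega_1$ and $\Omega_2$. Throughout write $I(\nu,\sigma):=\iint|x-y|^{-1}\,d\nu(x)\,d\sigma(y)$, so that $\I_1(A)=\inf_{\mu\in\mathcal P(A)}I(\mu,\mu)$. Two preliminary remarks: first, $\I_1(\Omega_i)\in(0,\infty)$ — finiteness because the normalized restriction of Lebesgue measure to $\Omega_i$ has finite energy, positivity because $I(\sigma,\sigma)\ge\diam(\Omega_i)^{-1}$ for every $\sigma\in\mathcal P(\Omega_i)$; second, Lemma \ref{l:optm} applies to $\Omega_1$, $\Omega_2$ and to $\Omega$ (which is compact, of positive measure, with $\mathcal H^1(\partial\Omega)\le\mathcal H^1(\partial\Omega_1)+\mathcal H^1(\partial\Omega_2)<\infty$), so each carries a unique equilibrium measure, absolutely continuous with respect to Lebesgue measure.

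For the upper bound, let $\mu_i$ denote the equilibrium measure of $\Omega_i$ and put $\mu:=t\mu_1+(1-t)\mu_2\in\mathcal P(\Omega_1\cup\Omega_2)$. Bilinearity of $I$ gives $I(\mu,\mu)=t^2\I_1(\Omega_1)+(1-t)^2\I_1(\Omega_2)+2t(1-t)I(\mu_1,\mu_2)$, and since $|x-y|\ge\mathrm{dist}(\Omega_1,\Omega_2)$ for $x\in\Omega_1$, $y\in\Omega_2$, we have $I(\mu_1,\mu_2)\le\mathrm{dist}(\Omega_1,\Omega_2)^{-1}$. As $\I_1(\Omega_1\cup\Omega_2)\le I(\mu,\mu)$, this is the claimed inequality; when $\mathrm{dist}(\Omega_1,\Omega_2)=0$ and $t\in(0,1)$ the right-hand side is $+\infty$, while for $t\in\{0,1\}$ the bound reduces to $\I_1(\Omega_1\cup\Omega_2)\le\min\{\I_1(\Omega_1),\I_1(\Omega_2)\}$, a consequence of the inclusions $\mathcal P(\Omega_i)\subset\mathcal P(\Omega_1\cup\Omega_2)$.

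For the lower bound, let $\mu$ be the equilibrium measure of $\Omega$. Since $\mu$ is absolutely continuous and $|\Omega_1\cap\Omega_2|=0$, the masses split: with $a:=\mu(\Omega_1)$ we get $\mu(\Omega_2)=1-a$ and $\mu=\mu(\cdot\cap\Omega_1)+\mu(\cdot\cap\Omega_2)$. If $a\in(0,1)$, set $\mu_i':=\mu(\cdot\cap\Omega_i)/\mu(\Omega_i)\in\mathcal P(\Omega_i)$, so that $\mu=a\mu_1'+(1-a)\mu_2'$; expanding and using that $\mu_i'$ is admissible for $\I_1(\Omega_i)$,
\begin{align*}
\I_1(\Omega_1\cup\Omega_2)=I(\mu,\mu)&=a^2 I(\mu_1',\mu_1')+(1-a)^2 I(\mu_2',\mu_2')+2a(1-a)I(\mu_1',\mu_2')\\
&\ge a^2\,\I_1(\Omega_1)+(1-a)^2\,\I_1(\Omega_2)+2a(1-a)I(\mu_1',\mu_2'),
\end{align*}
and the cross term is strictly positive since $\int|x-y|^{-1}\,d\mu_2'(y)\ge\diam(\Omega)^{-1}>0$ for $x\in\Omega_1$; hence $\bar t:=a$ works. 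If $a\in\{0,1\}$, say $a=1$, then $\mu\in\mathcal P(\Omega_1)$, whence $\I_1(\Omega_1)\le I(\mu,\mu)=\I_1(\Omega_1\cup\Omega_2)\le\I_1(\Omega_1)$, so $\I_1(\Omega_1\cup\Omega_2)=\I_1(\Omega_1)$; since $\I_1(\Omega_1),\I_1(\Omega_2)\in(0,\infty)$, the desired inequality $\I_1(\Omega_1)>\bar t^2\,\I_1(\Omega_1)+(1-\bar t)^2\,\I_1(\Omega_2)$, i.e.\ $(1+\bar t)\,\I_1(\Omega_1)>(1-\bar t)\,\I_1(\Omega_2)$, holds for any $\bar t<1$ sufficiently close to $1$ (and symmetrically if $a=0$).

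I expect the only genuinely delicate point to be in the lower bound: the admissible $\bar t$ is not free, it is forced to equal the mass $\mu(\Omega_1)$ that the equilibrium measure of the union places on $\Omega_1$, and one must check that restricting and renormalizing this measure produces legitimate competitors for the individual capacitary energies — which is precisely where the absolute continuity from Lemma \ref{l:optm} (to split the masses) and the finiteness of $\I_1(\Omega_i)$ (to avoid $\infty-\infty$ in the expansion) enter. Everything else is routine bookkeeping with the positive kernel $|x-y|^{-1}$ and the monotonicity and subadditivity of $\I_1$ under inclusions and finite unions.
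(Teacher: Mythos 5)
Your proof is correct, and it is essentially the argument behind this lemma: the paper does not prove it but cites \cite[Lemma 2]{MurNovRuf}, where the upper bound is likewise obtained by testing with the convex combination $t\mu_1+(1-t)\mu_2$ of the equilibrium measures, and the strict lower bound by splitting the equilibrium measure of $\Omega_1\cup\Omega_2$ into its (renormalized) restrictions to $\Omega_1$ and $\Omega_2$ and using the strictly positive cross term. Your extra care about the degenerate case $\mu(\Omega_1)\in\{0,1\}$ is harmless but in fact unnecessary, since by Lemma \ref{l:optm} the equilibrium measure of the union has positive density on the interior, so it gives positive mass to each $\Omega_i$.
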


	From \cite[Section 2]{gnrI} (see also \cite{landkof}) we have that
	\begin{equation}\label{capuno}
	\I_1(\Om)=\frac{2\pi}{{\rm cap}_1(\Om)}\,,
	\end{equation}
	whenever $\Om$ is a compact set, where 
	${\rm cap}_1(\Om)$ is the $\frac 12$-capacity of $\Om$ defined as
	\begin{equation}\label{capcap}
          {\rm
            cap}_1(\Omega):=\inf\left\{{\|u\|^2_{\mathring{H}^{\frac12}(\R^2)}}
            :u\in  C^1_c(\R^2),\quad u\ge\chi_\Omega\right\}, 
	\end{equation}
	and
	\begin{align}
          {\|u\|^2_{\mathring{H}^{\frac12}(\R^2)} } :=\frac{1}{4\pi}
          \int_{\R^2}\int_{\R^2} \frac{|u(x)-u(y)|^2}{|x-y|^3}\,dxdy
	\end{align}
        {is the Gagliardo norm of the homogeneous fractional Sobolev
          space obtained via completion of $C^\infty_c(\R^2)$ with
          respect to that norm \cite{DPV,LMM}}.  For the sake of
        completeness, we provide a short justification of this fact:
        Let $v_\Om:=\mu_\Omega*|\cdot|^{-1}$ be the potential of
        $\Omega$, where $\mu_\Om$ is the equilibrium measure for
        $\Omega$. Then $v_\Om$ satisfies (see, for instance,
        \cite[Lemma 2.11]{gnrI})
	\begin{align}
	\begin{cases}
	(-\Delta)^\frac12 v_\Om=0\qquad&\text{on }\Omega^c\\
	v_\Om=\I_1(\Omega)&\text{a.e. on }\Omega\\
	\lim_{|x|\to+\infty}v_\Om(x)=0\,.
	\end{cases}
	\end{align}
	Furthermore,
        \begin{align}
          \label{uOm}
          u_\Om:=\I_1^{-1}(\Omega) \, v_\Om
        \end{align}
        is the $\frac12$-capacitary potential of $\Omega$ attaining
        the infimum in \eqref{capcap}.  In particular, by
        \cite[Proposition 3.4]{DPV} we have
	\begin{align}
	\I_1(\Omega)=\int_{\R^2} v_\Om\,d\mu_\Om =
        \frac{1}{2\pi}\,{\|
          v_\Om\|^2_{{\mathring{H}}^\frac12(\R^2)}} \,,
	\end{align} 
	so that
	\begin{equation}\label{1su1}
          {\rm
            cap}_1(\Omega)={
            \|u_\Om\|^2_{{\mathring{H}}^\frac12(\R^2)}} = 
          \frac{{\|v_\Om\|^2_{{\mathring{H}}^\frac12(\R^2)}}
          }{\I_1(\Omega)^2}=   
          \frac{2\pi}{\I_1(\Omega)}. 
	\end{equation}  
	The link with the classical Newtonian capacity ${\rm
          cap}(\Om)$, defined for $\Om\subset\R^3$ as
	\begin{align}
          {\rm cap}(\Om) := \inf\left\{\|{\nabla u}\|^2_{L^2(\R^3)}
          \,:\,u\in C^1_c(\R^3),\quad u\ge\chi_{\Om} \right\},
	\end{align}  
	is given by the equality
        \begin{equation}\label{capcapo}
        {\rm cap}_1(\Omega)= 2\,{\rm cap}(\Omega\times\{0\}),
        \end{equation}
        for any compact set $\Omega\subset\R^2$ \cite[Theorem
        11.16]{liebloss}.  Finally, we recall that 
          ${\rm cap}_1(\Omega) = 0$ if $\mathcal H^1(\Omega) < \infty$
          (see \cite[Theorem 3.14]{landkof}).

        We note that a priori the functional $\I_1$ is not lower
        semicontinuous with respect to $L^1$ convergence.  However,
        given a compact set $\Omega$, $\I_1$ is semicontinuous along a
        specific family of sets, namely sets of the form
        \begin{align}
          \label{Omdelta}
          \Omega^\delta := \{x\in\R^2:\ {\rm dist}(x,\Om)\le\delta\},          
        \end{align}        
        for $\delta\to0$.  This is formalized in the next lemma, and
        then exploited in Proposition \ref{teorelpro}.

\begin{lemma}\label{1maggio}
  Let $\Omega$ be a compact subset of $\R^2$ and let
  $(\delta_n)_{n\in\mathbb N}\subset[0,+\infty)$ and
  $\bar \delta\in[0,+\infty)$ be such that
  $\delta_n\to\overline\delta$ {as $n \to \infty$}.  
   Then
	\begin{align}
	\I_1(\Omega^{\overline{\delta}})\le \liminf_{n\to+\infty}\I_1(\Omega^{\delta_n}).
	\end{align}
	Moreover, if $\delta_n\searrow\overline{\delta}$ there holds
	\begin{align}
	\I_1(\Omega^{\overline{\delta}})= \lim_{n\to+\infty}\I_1(\Omega^{\delta_n}).
	\end{align}
\end{lemma}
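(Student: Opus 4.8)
The plan is to combine the monotonicity of $\I_1$ under set inclusion with a standard weak-$*$ compactness argument for near-equilibrium measures on the sets $\Om^{\delta_n}$. The easy half of the equality statement comes essentially for free: if $\delta_n\searrow\overline\delta$ then $\Om^{\overline\delta}\subset\Om^{\delta_n}$, so every $\mu\in\mathcal P(\Om^{\overline\delta})$ competes in the infimum defining $\I_1(\Om^{\delta_n})$, whence $\I_1(\Om^{\delta_n})\le\I_1(\Om^{\overline\delta})$ for all $n$ and $\limsup_n\I_1(\Om^{\delta_n})\le\I_1(\Om^{\overline\delta})$. Consequently, the second assertion of the lemma will follow once the first (general) inequality is established, so the work is entirely in proving $\I_1(\Om^{\overline\delta})\le\liminf_n\I_1(\Om^{\delta_n})$.

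To prove that inequality I would first assume $\liminf_n\I_1(\Om^{\delta_n})<+\infty$ (otherwise there is nothing to prove) and, passing to a non-relabeled subsequence, arrange that $\I_1(\Om^{\delta_n})\to\liminf_n\I_1(\Om^{\delta_n})$ with all terms finite. For each $n$ pick $\mu_n\in\mathcal P(\Om^{\delta_n})$ with
\[
\int\!\!\int\frac{d\mu_n(x)\,d\mu_n(y)}{|x-y|}\le\I_1(\Om^{\delta_n})+\tfrac1n.
\]
Since $\delta_n\to\overline\delta$, for $n$ large all $\mu_n$ are supported in the fixed compact set $\Om^{\overline\delta+1}$, so weak-$*$ compactness of probability measures lets me extract a further subsequence with $\mu_n\rightharpoonup\mu$ for some $\mu\in\mathcal P(\R^2)$.

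Two facts then close the argument. First, $\mathrm{supp}\,\mu\subset\Om^{\overline\delta}$: if $x\notin\Om^{\overline\delta}$, set $\eta:=({\rm dist}(x,\Om)-\overline\delta)/2>0$; for $n$ large one has $\delta_n<\overline\delta+\eta<{\rm dist}(x,\Om)-\eta$, so $B_\eta(x)\cap\Om^{\delta_n}=\emptyset$, hence $\mu_n(B_\eta(x))=0$, and by the Portmanteau theorem $\mu(B_\eta(x))\le\liminf_n\mu_n(B_\eta(x))=0$; thus $\mu$ vanishes on the open set $(\Om^{\overline\delta})^c$. Second, the $1$-Riesz energy is lower semicontinuous under weak-$*$ convergence of measures: since $|x-y|^{-1}$ is nonnegative and lower semicontinuous, the truncations $K_M:=\min(|x-y|^{-1},M)$ are bounded continuous functions, one has $\mu_n\otimes\mu_n\rightharpoonup\mu\otimes\mu$ (a consequence of $\mu_n\rightharpoonup\mu$, via Stone--Weierstrass), and letting $M\to\infty$ by monotone convergence gives
\[
\int\!\!\int\frac{d\mu\,d\mu}{|x-y|}=\sup_M\int\!\!\int K_M\,d\mu\,d\mu=\sup_M\lim_n\int\!\!\int K_M\,d\mu_n\,d\mu_n\le\liminf_n\int\!\!\int\frac{d\mu_n\,d\mu_n}{|x-y|};
\]
this lower semicontinuity is classical (see \cite{landkof}). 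Combining the two facts with $\mu\in\mathcal P(\Om^{\overline\delta})$ yields $\I_1(\Om^{\overline\delta})\le\int\!\!\int|x-y|^{-1}\,d\mu\,d\mu\le\liminf_n\bigl(\I_1(\Om^{\delta_n})+\tfrac1n\bigr)=\liminf_n\I_1(\Om^{\delta_n})$, as desired. The step that will require the most care — although it is standard potential theory — is precisely this lower semicontinuity of the energy along weakly-$*$ convergent probability measures together with the verification that the limit measure carries no mass outside $\Om^{\overline\delta}$; everything else is elementary bookkeeping.
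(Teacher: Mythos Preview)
Your proof is correct, but it takes a genuinely different route from the paper's. The paper first reduces to monotone sequences and then splits into two cases: for $\delta_n\nearrow\overline\delta$ the liminf inequality is immediate from monotonicity of $\I_1$ under inclusion, while for $\delta_n\searrow\overline\delta$ the paper invokes the identity $\I_1(\Om)=2\pi/{\rm cap}_1(\Om)$ and argues on the capacity side --- a test function $\varphi\in C^1_c(\R^2)$ with $\varphi>\chi_{\Om^{\overline\delta}}$ and near-optimal $\mathring H^{1/2}$ norm satisfies $\{\varphi>1\}\supset\Om^{\delta_n}$ for $n$ large, so it is admissible for ${\rm cap}_1(\Om^{\delta_n})$, yielding ${\rm cap}_1(\Om^{\delta_n})\to{\rm cap}_1(\Om^{\overline\delta})$. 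Your approach instead works directly with the defining infimum over probability measures, extracting weak-$*$ limits of near-equilibrium measures and using the classical lower semicontinuity of the Riesz energy. Your argument is more self-contained (it does not need the capacity duality that the paper sets up just before the lemma) and handles the liminf inequality uniformly without the monotone-case split; the paper's argument is shorter given that the identity \eqref{capuno} is already available, and has the mild advantage of giving the decreasing-case continuity in one line without any compactness extraction.
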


\begin{proof}
  We can suppose that $(\delta_n)_{n\in\mathbb N}$ is a monotone
  sequence. We have two cases: If $\delta_n\nearrow\overline{\delta}$,
  then $\delta_n\le\overline{\delta}$ for any $n$ and thus by the
  monotonicity of $\I_1$ with respect to {set inclusions}, we have
  that $\I_1(\Omega^{\delta_n})\ge\I_1(\Omega^{\overline{\delta}})$
  and the lower semicontinuity is proven.
	
  We deal now with the case $\delta_n\searrow\overline{\delta}$.  Let
  us fix $\varepsilon>0$ and let $\varphi\in C^1_c(\R^2)$ be such that
  $\varphi>\chi_{\Omega^{\overline{\delta}}}$, and
  $ {\|\varphi\|^2_{{\mathring{H}}^{\frac12}(\R^2)}}\le {\rm
    cap}_1(\Omega^{\overline{\delta}})+\varepsilon$.  Then, since
  $\{ \varphi>1\}$ is an open set which contains
  $\Omega^{\overline{\delta}}$, for $n$ big enough (depending on
  $\varepsilon$) $ \varphi$ is also a test function for
  ${\rm cap}_1(\Omega^{\delta_n})$, and we get
	\begin{align}
	{\rm cap}_1(\Omega^{\overline\delta})\le
	{\rm cap}_1(\Omega^{\delta_n})
	\le 
	{\|\varphi\|^2_{{\mathring{H}}^{\frac12}(\R^2)}}
	\le
	{\rm cap}_1(\Omega^{\overline{\delta}})+\varepsilon.
	\end{align}
	Letting $\varepsilon\searrow 0$, we get the continuity of
        ${\rm cap}_1$ and hence of $\I_1$ by \eqref{1su1}.
\end{proof}

We prove now a result which turns out to be very useful in the proof
of the semicontinuity result in Theorem \ref{teorel}, as well as of
the existence and regularity results in Theorems \ref{teoexistence}
and \ref{teodensity}.

\begin{lemma}\label{distributionofcharges}
    Let $\Omega=U\cup V$ with $U$ and $V$
  {compact} sets of finite positive measure and such that
  $|U\cap V|=0$. Then we have
	\begin{align}
	\I_1(\Omega)\,\ge\, \I_1(U) -\frac{\pi}{4{|U|}{}}\,P(V).
	\end{align}
\end{lemma}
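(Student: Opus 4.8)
The plan is to work throughout with the $\tfrac12$-capacity, using $\I_1(\cdot)=2\pi/{\rm cap}_1(\cdot)$ from \eqref{capuno}. Note ${\rm cap}_1(U)>0$ since $|U|>0$, and if $P(V)=+\infty$ there is nothing to prove, so assume $P(V)<+\infty$. The estimate will be assembled from three ingredients: (i) subadditivity, ${\rm cap}_1(\Om)\le{\rm cap}_1(U)+{\rm cap}_1(V)$; (ii) the isoperimetric bound ${\rm cap}_1(U)\ge{\rm cap}_1(B_{r_U})$, where $B_{r_U}$ is a disk with $\pi r_U^2=|U|$; and (iii) the bound ${\rm cap}_1(V)\le\tfrac2\pi P(V)$. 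Granting these, and recalling that $\I_1(B_r)=\pi/(2r)$ for a disk of radius $r$ (a classical computation, consistent with \eqref{eq:lamc}--\eqref{lambda}), so that ${\rm cap}_1(B_{r_U})=2\pi/\I_1(B_{r_U})=4r_U=4\sqrt{|U|/\pi}$, the elementary inequality $\tfrac1{x+y}\ge\tfrac1x-\tfrac{y}{x^2}$ (for $x,y>0$) yields
\begin{align*}
  \I_1(\Om)=\frac{2\pi}{{\rm cap}_1(\Om)}\ \ge\ \frac{2\pi}{{\rm cap}_1(U)+{\rm cap}_1(V)}\ \ge\ \I_1(U)-\frac{2\pi\,{\rm cap}_1(V)}{{\rm cap}_1(U)^2}\ \ge\ \I_1(U)-\frac{2\pi\cdot\tfrac2\pi P(V)}{16|U|/\pi}\ =\ \I_1(U)-\frac{\pi\,P(V)}{4|U|},
\end{align*}
which is the claim.

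For (i) --- the redistribution of the charge, whence the lemma's name --- let $\mu_\Om$ be the equilibrium measure of $\Om$ from Lemma \ref{l:optm}, so that $\mu_\Om=\rho\,dx$ with $\rho>0$ on $\mathrm{int}(\Om)$, $\mu_\Om(\partial\Om)=0$, and the potential $v_{\mu_\Om}(x):=\int|x-y|^{-1}\,d\mu_\Om(y)$ equals $\I_1(\Om)$ for a.e.\ $x\in\Om$; the case $\mathcal H^1(\partial\Om)=+\infty$ is reduced to this one by a routine approximation of $U,V$ by finite-perimeter sets. Set $\mu_U(\cdot):=\mu_\Om(\,\cdot\cap U)$, $\mu_V(\cdot):=\mu_\Om(\,\cdot\cap V)$, $a:=\mu_U(U)$ and $b:=\mu_V(V)$. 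Since $|U\cap V|=0$ and $\mu_\Om\ll\mathcal L^2$ we have $a+b=1$, and $a,b>0$ because $|U\cap\mathrm{int}\,\Om|=|U|>0$ and $|V\cap\mathrm{int}\,\Om|=|V|>0$. As $a^{-1}\mu_U\in\mathcal P(U)$, using $v_{\mu_U}\le v_{\mu_\Om}$ and $v_{\mu_\Om}=\I_1(\Om)$ $\mu_\Om$-a.e.\ on $U$,
\begin{align*}
  \I_1(U)\ \le\ \frac1{a^2}\int_U\!\int_U\frac{d\mu_U(x)\,d\mu_U(y)}{|x-y|}\ =\ \frac1{a^2}\int_U v_{\mu_U}\,d\mu_U\ \le\ \frac1{a^2}\int_U v_{\mu_\Om}\,d\mu_U\ =\ \frac{\I_1(\Om)}{a}.
\end{align*}
The same argument with $V$ in place of $U$ gives $\I_1(V)\le\I_1(\Om)/b$, and adding the reciprocals yields $1=a+b\le\I_1(\Om)\big(\I_1(U)^{-1}+\I_1(V)^{-1}\big)=\I_1(\Om)\big({\rm cap}_1(U)+{\rm cap}_1(V)\big)/(2\pi)$, which is (i).

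Step (ii) is the statement that the disk minimizes ${\rm cap}_1$ at fixed area: given $u\in C^1_c(\R^2)$ with $u\ge\chi_U$ we may assume $u\ge0$, and its symmetric decreasing rearrangement $u^*$ satisfies $u^*\ge(\chi_U)^*=\chi_{B_{r_U}}$ and, by the fractional P\'olya--Szeg\H{o} inequality, has Gagliardo $\tfrac12$-seminorm (the one in \eqref{capcap}) no larger than that of $u$; taking the infimum over such $u$ gives ${\rm cap}_1(B_{r_U})\le{\rm cap}_1(U)$. Step (iii) is the perimeter form of the universal bound \eqref{universal} of Lemma \ref{lemtom}: applying \eqref{universal} to $V$ and minimizing the left-hand side over $\lambda>0$ gives $\I_1(V)\,P(V)\ge\pi^2$ (for $V$ with $\mathcal H^1(\partial V)=P(V)$ this is \eqref{universal} itself, and the De Giorgi form follows by approximation), i.e.\ ${\rm cap}_1(V)=2\pi/\I_1(V)\le\tfrac2\pi P(V)$.

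The main point to watch is that the displayed chain is constant-critical: every inequality in it is an equality when $U$ and $V$ are disks, so steps (ii) and (iii) must both be used in their sharp form. In particular the cruder bound ${\rm cap}_1(U)\ge\sqrt{\pi|U|}$ obtained by testing $\I_1(U)$ with the normalized Lebesgue measure on $U$, or any variant of (iii) featuring $\mathcal H^1(\partial V)$ rather than $P(V)$, already destroys the exact constant $\pi/4$; so the real work is locating these optimal auxiliary inequalities rather than manipulating them. The remaining issues --- the nondegeneracy $a,b>0$ in step (i) and the reduction of the case $\mathcal H^1(\partial\Om)=+\infty$ --- are purely technical.
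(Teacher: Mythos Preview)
Your proof is correct and follows essentially the same route as the paper's: both arguments combine the splitting inequality $\I_1(U\cup V)\ge \I_1(U)\I_1(V)/(\I_1(U)+\I_1(V))$ (equivalently, subadditivity of ${\rm cap}_1$) with the two sharp bounds $\I_1(U)\le \pi^{3/2}/(2\sqrt{|U|})$ and $\I_1(V)\,P(V)\ge\pi^2$, and then perform the same algebraic simplification. The differences are only in packaging: you work in the ${\rm cap}_1$ language throughout, you supply a direct potential-theoretic proof of the splitting (the paper cites Lemma~\ref{lemdue} from \cite{MurNovRuf} instead), you obtain (ii) via fractional P\'olya--Szeg\H{o} rearrangement (the paper cites \cite{polya2}), and you derive (iii) by optimizing \eqref{universal} of Lemma~\ref{lemtom} over $\lambda$ (the paper cites \cite{NovRuf} directly for $\I_1(\cdot)P(\cdot)\ge\pi^2$).
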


\begin{proof}
	By Lemma \ref{lemdue} we have
	\begin{align}
	\I_1(\Omega)\ge \min_{t\in[0,1]}\left\{t^2\I_1(U)+(1-t)^2\I_1(V)\right\}.
	\end{align}
	By computing 
	the minimum on the right-hand side, we get
	\begin{equation}\label{fe}
	\I_1(\Omega)\ge \I_1(U)-\frac{\I_1^2(U)}{\I_1(U)+\I_1(V)}
	\ge \I_1(U)-\frac{\I_1^2(U)}{\I_1(V)}.
	\end{equation}
	We recall that $\I_1$ is maximized by the ball among sets of
        fixed volume.  Letting $B:={B_{\sqrt{|U|/\pi}}(0)}$, we then
        get that $|B|=|U|$ and {\cite[Lemma 3.3]{MurNovRuf}}
	\begin{equation}\label{bm}
	\I_1(U)\le\I_1(B)=\frac{\pi^{\frac32}}{2\sqrt{|U|}}.
	\end{equation}
	Moreover, by \cite[Corollary 3.2]{NovRuf} the dilation
        invariant functional ${\mathcal F := \I_1(\cdot) P(\cdot)}$ is
        minimized by balls,
	and on a ball $B_r$ it takes the value
        $\I_1(B_r)P(B_r)=\pi^2$, so that 
	\begin{align}
	\I_1(V)\ge \frac{\pi^2}{P(V)}.
	\end{align}
	We plug these two estimates into  \eqref{fe} to get
	\begin{align}
          \I_1(\Omega)\ge
          \I_1(U)-\left(\frac{\pi^\frac32}{2{\sqrt{|U|}}{}}\right)^2\frac{P(V)}{\pi^2}
          = \I_1(U) -\frac{\pi}{4{|U|}{}}\,P(V),
	\end{align}
        which is the desired estimate.
\end{proof}

In the proof of Theorem \ref{teoexistence} we shall use some
topological features of sets of finite perimeter in dimension
two. Since these sets are defined in the $L^1-$sense ({as equivalence
  classes}), it is not a priori immediate how to define what a
connected component for a set of finite perimeter is. A suitable
notion of connected components for sets of finite perimeter was
introduced in \cite{ACMM}. Below we recall some of their main features
that we shall use in the sequel.

Given $\Omega \in \mathcal K_m$, let $\mathring \Omega^M$ be its
measure theoretic interior, namely:
\begin{align}
\label{eq:OmMbar}
  \mathring \Omega^M := \left\{ x \in \R^2 \ : \ \lim_{r \to 0}
  {|\Omega \cap B_r(x)| \over \pi r^2} =1 \right\}.
\end{align}
Since
$P(\mathring{\Omega}^M)= P(\Omega) = \mathcal H^1(\partial^M \Omega)
\leq \mathcal H^1(\partial \Omega) < +\infty$, where
$\partial^M \Omega$ is the essential boundary of $\Omega$
\cite{maggi}, the set $\mathring \Omega^M$ is a set of finite
perimeter.  Therefore, following \cite{ACMM}, there exists an at most
countable family of sets of finite perimeter $\Omega_i$ such that
$\mathring \Omega^M = \left( \bigcup_{{i}}\mathring\Omega_i^M\right)
\cup \Sigma$, with $\mathcal H^1(\Sigma) = 0$, where the sets
$\mathring\Omega_i^M$ are the so-called {\it indecomposable
  components} of $\mathring\Omega^M$. In particular, the sets
$\Omega_i$ admit unique representatives that are connected and satisfy
the following properties:
\begin{enumerate}[ \ (i)]
	\item $\mathcal H^1(\Omega_i\cap \Omega_j)=0$ for $i\ne j$, 
	\item $|\Omega|=\sum_{{i}} |\Omega_i|$,
	\item $P(\Omega) =\sum_{{i}}P(\Omega_i)$,
        \item $\Omega_i=\overline{\mathring\Omega_i^M}$.
\end{enumerate} 
Moreover, each set $\Omega_i$ is indecomposable in the sense that it
cannot be further decomposed as above. We refer to these
representatives of $\Omega_i$ as the {\em connected components} of
$\Omega$.  We point out that this notion coincides with the standard
notion of connected components in the following sense: if $\Omega$ has
a regular boundary (Lipschitz continuous being enough) then the
components $\Omega_i$ are the closures of the usual connected
components of the interior of $\Omega$.

Such a representation of $\Omega$ as a union of connected components
allows us to convexify the components in order to decrease the
energy. Indeed, for every $i \in \mathbb N$ there holds
$\mathcal I_1(\Omega_i) \leq \mathcal I_1(co(\Omega_i))$ and
$\mathcal H^1(\partial \, co(\Omega_i))\le \mathcal H^1(\partial
\Omega_i)$, where $co(\Omega_i)$ denotes the convex envelope of the
component $\Omega_i$. This follows from the fact that
$\Omega_i \subseteq co(\Omega_i)$, and that the outer boundary of a
connected component can be parametrized by a Jordan curve of finite
length (see \cite[Section 8]{ACMM}). In addition, since
$\partial \Omega$ is negligible with respect to the equilibrium
measure for $\I_1(\Omega)$ by Lemma \ref{l:optm}, we have
$\I_1(\Omega) = \I_1(\mathring \Omega^M)$.

The next result shows that the relaxations of $E_\lambda$ in  $\S_m$ and in $\K_m$ coincide.

\begin{proposition}\label{p:schmidt}
	Given $\Omega \in \K_m$, there exists a sequence of sets
	$\Omega_n \in \S_m$ such that
	\begin{align}
	\label{limsupschmidt}
	\lim_{n \to \infty}|\Omega_n \Delta \Omega|=0\qquad \text{ and }\qquad
	\limsup_{n \to \infty} E_\lambda(\Omega_n) \leq
	E_\lambda(\Omega). 
	\end{align}
\end{proposition}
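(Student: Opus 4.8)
The plan is to approximate $\Omega \in \K_m$ by smooth sets in two stages: first replace $\Omega$ by a slightly enlarged set $\Omega^\delta$ as in \eqref{Omdelta}, which has good capacitary behavior by Lemma \ref{1maggio}, and then mollify $\partial \Omega^\delta$ to obtain a smooth set. The area constraint $|\Omega_n| = m$ must be restored by a rescaling. More precisely, I would proceed as follows. Fix $\eps > 0$. By the coarea/approximation theory for sets of finite perimeter together with Sard's theorem applied to the signed distance function $d_\Omega(x) = \mathrm{dist}(x,\Omega) - \mathrm{dist}(x,\Omega^c)$, for a.e.\ small $\delta > 0$ the superlevel set $\Omega^\delta = \{d_\Omega \le \delta\}$ (or a slight variant using a mollified distance so that the boundary is smooth — here one can invoke the results of Schmidt, which is presumably the reference alluded to in the name of the proposition) has smooth boundary and satisfies $\mathcal H^1(\partial \Omega^\delta) \le P(\Omega) + \eps$ for $\delta$ small. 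This uses that $P(\Omega^\delta) \to P(\Omega)$ as $\delta \to 0^+$, a standard fact for the closed $\delta$-neighborhoods of a set of finite perimeter in the plane (monotone convergence of perimeters of outer parallel bodies), which holds here because $\mathcal H^1(\partial \Omega) < \infty$.

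Next, by Lemma \ref{1maggio} (the $\delta_n \searrow \bar\delta = 0$ case), we have $\I_1(\Omega^\delta) \to \I_1(\Omega)$ as $\delta \to 0^+$, so $\lambda\,\I_1(\Omega^\delta) \le \lambda\,\I_1(\Omega) + \eps$ for $\delta$ small. Since $g$ is continuous and $|\Omega^\delta \Delta \Omega| = |\Omega^\delta| - m \to 0$ (the boundary $\partial\Omega$ being $\mathcal H^1$-finite hence Lebesgue-null, and $\Omega^\delta \searrow \Omega$), dominated convergence on a fixed large ball containing all $\Omega^\delta$ gives $\int_{\Omega^\delta} g\,dx \to \int_\Omega g\,dx$. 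Combining, $E_\lambda(\Omega^\delta) \le E_\lambda(\Omega) + 3\eps$ for $\delta$ small, and $\partial\Omega^\delta$ is smooth — but $|\Omega^\delta| = m + o(1) \ne m$. To fix the volume I rescale: set $\widetilde\Omega_\delta := t_\delta \,\Omega^\delta$ with $t_\delta := (m/|\Omega^\delta|)^{1/2} \to 1$, possibly composed with a translation; then $|\widetilde\Omega_\delta| = m$, $\widetilde\Omega_\delta \in \S_m$, and by the scaling behavior $\mathcal H^1(\partial (t\Omega)) = t\,\mathcal H^1(\partial\Omega)$, $\I_1(t\Omega) = t^{-1}\I_1(\Omega)$ together with the (uniform, since all sets lie in a fixed ball) continuity of $\int g$, the energy changes by at most $\eps$ more. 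Taking $\delta = \delta_n \to 0$ along a sequence of admissible (smooth-boundary) values and a diagonal argument over $\eps = 1/n$ yields the sequence $\Omega_n \in \S_m$ with $|\Omega_n \Delta \Omega| \to 0$ and $\limsup_n E_\lambda(\Omega_n) \le E_\lambda(\Omega)$.

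The main obstacle is the smoothness of the boundary of the approximating sets: the closed $\delta$-neighborhood $\Omega^\delta$ of a general finite-perimeter set need not have smooth, or even Lipschitz, boundary. This is where one must work a little harder — either mollify the distance function and apply Sard to pick a regular value (controlling the perimeter of the resulting smooth superlevel set via the convergence of mollifications in $BV$ and a careful choice of the mollification scale relative to $\delta$), or directly invoke the smooth approximation theorem for sets of finite perimeter that preserves convergence of perimeter, and then check that the capacitary energy $\I_1$ and the potential term pass to the limit along that specific approximating sequence. The delicate point in the latter route is that $\I_1$ is \emph{not} $L^1$-lower (or upper) semicontinuous in general, so the approximation must be arranged so that the approximating sets \emph{contain} (up to negligible sets) shrinking neighborhoods of $\Omega$, putting us back in the monotone setting covered by Lemma \ref{1maggio}; reconciling this containment requirement with the perimeter control and the smoothness is the crux of the argument.
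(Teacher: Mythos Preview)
Your approach via $\delta$-neighbourhoods correctly handles the capacitary term (Lemma~\ref{1maggio} gives exactly $\I_1(\Omega^\delta)\to\I_1(\Omega)$ as $\delta\searrow 0$) and the potential term, but the perimeter step is a genuine gap. The claim $\mathcal H^1(\partial\Omega^\delta)\le P(\Omega)+\eps$ for small (or a.e.\ small) $\delta$ is not standard and is not implied by $\Omega\in\K_m$: the map $\delta\mapsto P(\Omega^\delta)$ is certainly not monotone (think of two nearby components merging), and bounding $\limsup_{\delta\to 0}\mathcal H^1(\partial\Omega^\delta)$ by $P(\Omega)$ is equivalent, via coarea, to bounding the outer Minkowski content $\lim_{\delta\to 0}|\Omega^\delta\setminus\Omega|/\delta$ by $P(\Omega)$. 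That equality can fail for compact sets with $\mathcal H^1(\partial\Omega)<\infty$, since $\partial\Omega$ may contain purely unrectifiable or lower-dimensional pieces whose Minkowski content exceeds their $\mathcal H^1$-measure. Your last paragraph diagnoses the dilemma correctly---one needs \emph{both} containment $\Omega\subset\Omega_n$ (to control $\I_1$) \emph{and} $P(\Omega_n)\to P(\Omega)$---but does not resolve it.

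The paper resolves it differently and in two stages. First, when $P(\Omega)=\mathcal H^1(\partial\Omega)$, it applies Schmidt's strict interior approximation theorem \cite{schmidt} to the open set $B_R(0)\setminus\Omega$, obtaining in one stroke smooth compact sets $\widetilde\Omega_n\supset\Omega$ with $|\widetilde\Omega_n\Delta\Omega|\to 0$ and $P(\widetilde\Omega_n)\to P(\Omega)$; then the bare monotonicity $\I_1(\widetilde\Omega_n)\le\I_1(\Omega)$ (no Lemma~\ref{1maggio} needed) and a rescaling finish the job. Second, for the general case $P(\Omega)<\mathcal H^1(\partial\Omega)$---a distinction you do not make---the paper first approximates $\Omega$ in $\K_m$ by sets $\Omega_n$ whose boundaries are finite unions of Jordan curves (so $P(\Omega_n)=\mathcal H^1(\partial\Omega_n)$), using the decomposition theory of \cite{ACMM}, arranged so that additionally $P(\Omega\setminus\Omega_n)\to 0$. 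Along this sequence the sets do \emph{not} contain $\Omega$, so the capacitary term is controlled not by Lemma~\ref{1maggio} but by Lemma~\ref{distributionofcharges}, which bounds $\I_1(\Omega_n\cap\Omega)-\I_1(\Omega)$ by a multiple of $P(\Omega\setminus\Omega_n)$. A diagonal argument then reduces to the first case.
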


\begin{proof}
  Assume first that $P(\Omega)=\mathcal H^1(\partial\Omega)$. Then by
  \cite[Theorem 1.1]{schmidt} applied to $B_R(0) \setminus \Omega$,
  for $R > 0$ big enough there exists a sequence of compact sets
  $\widetilde \Omega_n$ with smooth boundaries such that
  $\widetilde \Omega_n \supset \Omega$,
  $| \widetilde \Omega_n \Delta \Omega| \to 0$ and
  $P(\widetilde \Omega_n) \to P(\Omega)$ as $n \to
  \infty$. Furthermore, by monotonicity of $\mathcal I_1$ with respect
  to set inclusions we have
  $\mathcal I_1(\widetilde \Omega_n) \leq \mathcal I_1(\Omega)$. Now,
  we define
  $\Omega_n := (m / |\widetilde \Omega_n|)^{1/2} \widetilde \Omega_n
  \in \S_m$, and in view of the fact that
  $ |\widetilde \Omega_n| \to m$ as $n \to \infty$ we obtain the
  result.
	
  Let us now consider {the} general case.  By \cite[Corollary
  1]{ACMM}, there exists a sequence of sets $\Omega_n\in \K_m$ such
  that { $\partial \Omega_n$ is a finite union of Jordan curves, and
    as $n\to \infty$ we have}
	\begin{align}
	| \Omega_n \Delta \Omega| \to 0, \qquad P(\Omega_n)\to
        P(\Omega),\qquad P(\Omega\setminus\Omega_n)\to 0.
	\end{align}
	{In particular,} $P(\Omega_n)=\mathcal H^1(\partial
        \Omega_n)$ for every $n\in\mathbb N$.  {Then b}y Lemma
        \ref{distributionofcharges} it follows that
	\begin{align}
	\I_1(\Omega_n)\le \I_1(\Omega_n\cap\Omega)\le \I_1(\Omega)
        + \omega_n, 
	\end{align} 
	with 
	\begin{align}
	\omega_n:= \frac{\pi}{4\,|\Omega_n\cap\Omega|}\,P(\Omega\setminus\Omega_n)
	\to0\qquad \text{ as $n\to \infty$,}
	\end{align}
	so that 
	\begin{align}
	\limsup_{n\to+\infty} E_\lambda(\Omega_n)\le E_\lambda(\Om).
	\end{align} 
	Applying now the approximation with regular sets  to each set $\Omega_n$,  we conclude by a diagonal argument.
\end{proof}

Proposition \ref{p:schmidt} yields the following characterization of the relaxed energy $\oE$.

\begin{corollary}\label{cor:schmidt}
  For every $\Omega\in \A_m$ there holds
	\begin{equation}
	\oE(\Om) = \inf_{\Omega_n\in\K_m,\,|\Om_n\Delta \Om|\to 0} \
	\liminf_{n\to \infty} E_\lambda(\Omega_n). 
	\end{equation}
\end{corollary}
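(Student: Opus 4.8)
The plan is to prove the two inequalities separately; the whole argument is soft, relying only on Proposition \ref{p:schmidt} and a diagonalization, so there is no genuine obstacle to overcome. One inequality is immediate: since $\S_m \subseteq \K_m$, every sequence $(\Omega_n)\subset\S_m$ admissible in the definition \eqref{oE} of $\oE(\Om)$ is also admissible in the infimum on the right-hand side, whence
\[
  \inf_{\Omega_n\in\K_m,\,|\Om_n\Delta \Om|\to 0}\
  \liminf_{n\to \infty} E_\lambda(\Omega_n)\ \le\ \oE(\Om).
\]

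For the reverse inequality it suffices to show that $\oE(\Om)\le \liminf_{n\to\infty}E_\lambda(\Omega_n)$ for an arbitrary fixed sequence $(\Omega_n)_n\subset\K_m$ with $|\Omega_n\Delta\Om|\to 0$, and then take the infimum over all such sequences. If $\liminf_{n\to\infty}E_\lambda(\Omega_n)=+\infty$ there is nothing to prove, so assume it is finite. Apply Proposition \ref{p:schmidt} to each $\Omega_n\in\K_m$: this yields, for every $n$, a sequence $(\Omega_{n,k})_k\subset\S_m$ with $|\Omega_{n,k}\Delta\Omega_n|\to 0$ and $\limsup_{k\to\infty}E_\lambda(\Omega_{n,k})\le E_\lambda(\Omega_n)$ as $k\to\infty$. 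Consequently, for each $n$ we may select an index $k(n)$ such that, writing $\widehat\Omega_n:=\Omega_{n,k(n)}\in\S_m$, we have both $|\widehat\Omega_n\Delta\Omega_n|\le 1/n$ and $E_\lambda(\widehat\Omega_n)\le E_\lambda(\Omega_n)+1/n$.

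The diagonal sequence $(\widehat\Omega_n)_n\subset\S_m$ then satisfies
\[
  |\widehat\Omega_n\Delta\Om|\ \le\ |\widehat\Omega_n\Delta\Omega_n|+|\Omega_n\Delta\Om|\ \longrightarrow\ 0,
\]
so it is an admissible competitor in \eqref{oE}, and therefore
\[
  \oE(\Om)\ \le\ \liminf_{n\to\infty}E_\lambda(\widehat\Omega_n)\ \le\ \liminf_{n\to\infty}\Bigl(E_\lambda(\Omega_n)+\tfrac1n\Bigr)\ =\ \liminf_{n\to\infty}E_\lambda(\Omega_n).
\]
Taking the infimum over all sequences $(\Omega_n)_n\subset\K_m$ with $|\Omega_n\Delta\Om|\to 0$ gives the missing inequality and completes the proof. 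The only points requiring any care are the bookkeeping in the diagonal extraction and the (harmless) separate treatment of the case $\liminf_{n\to\infty}E_\lambda(\Omega_n)=+\infty$; neither is a real difficulty.
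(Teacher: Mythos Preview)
Your argument is correct and is exactly the standard diagonalization that the paper has in mind: the corollary is stated in the paper as a direct consequence of Proposition~\ref{p:schmidt} without an explicit proof, and your two-inequality argument (the inclusion $\S_m\subset\K_m$ for one direction, and Proposition~\ref{p:schmidt} plus a diagonal extraction for the other) is precisely how one unpacks it.
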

 
\section{The relaxed energy: Proof of Theorem \ref{teorel}}\label{teo1}

In this section we prove Theorem \ref{teorel}. We divide the
proof into first characterizing the relaxation of $E_\lambda$ in
Proposition \ref{teorelpro} and then showing the semicontinuity of
$E_\lambda$ for $\lambda \leq \lambda_c(m)$ in Proposition
\ref{correl}.

\begin{proposition}\label{teorelpro}
  For any $\Om\in \K_m$, there holds
\begin{equation}
\oE(\Om) =\begin{cases}
E_\lambda(\Om) &  \text{if }\lambda\le\lambda_\Om,
\\
E_{\lambda_\Om}(\Om) + 2\pi \big(\sqrt\lambda - \sqrt\lambda_\Om\big) 
& \text{if }\lambda>\lambda_\Om,
\end{cases}
\end{equation}
where $\lambda_\Om$ is defined in \eqref{lambda}.
\end{proposition}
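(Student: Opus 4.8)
Throughout write $\E_\lambda(\Om)$ for the right-hand side of the claimed identity; the statement amounts to the two bounds $\oE(\Om)\le\E_\lambda(\Om)$ and $\oE(\Om)\ge\E_\lambda(\Om)$, and by Corollary \ref{cor:schmidt} we may take all competitor sequences in $\K_m$. The whole argument is organized around one scalar minimization: using $\sqrt{\lambda_\Om}=\pi/\I_1(\Om)$ and the elementary fact that $\min_{r>0}(2\pi r+\lambda\I_1(B_r))=2\pi\sqrt\lambda$, one checks that for every $\lambda>0$
\[
\E_\lambda(\Om)=P(\Om)+\int_\Om g\,dx+\min_{t\in[0,1]}\Big(\lambda\,t^2\I_1(\Om)+2\pi\sqrt\lambda\,(1-t)\Big),
\]
with minimizer $t=1$ if $\lambda\le\lambda_\Om$ (value $\lambda\I_1(\Om)$, i.e.\ $E_\lambda(\Om)$) and $t=\sqrt{\lambda_\Om/\lambda}$ if $\lambda>\lambda_\Om$ (value $\lambda_\Om\I_1(\Om)+2\pi\sqrt\lambda-2\pi\sqrt{\lambda_\Om}$). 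Here $t$ is the fraction of charge that stays on $\Om$ and $1-t$ the fraction sent off to infinity.

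\emph{Upper bound.} For $\lambda\le\lambda_\Om$ this is just Proposition \ref{p:schmidt}: $\oE(\Om)\le E_\lambda(\Om)=\E_\lambda(\Om)$. For $\lambda>\lambda_\Om$, set $s^*:=\tfrac12(\sqrt\lambda-\sqrt{\lambda_\Om})>0$ and take $\Om_n:=((m-\eps_n)/m)^{1/2}\,\Om\cup C_n\in\K_m$ with $\eps_n:=|C_n|\to0$, where $C_n$ is a ``ghost ball'': a disjoint union of small balls with $|C_n|\to0$, $P(C_n)\to 2\pi s^*=P(B_{s^*})$, and $\I_1(C_n)\to\pi/(2s^*)=\I_1(B_{s^*})$. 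Such $C_n$ can be obtained by placing many well-separated small balls, so that iterating Lemma \ref{lemdue} together with $\I_1P\ge\pi^2$ forces both limits; for an arbitrary $g$ bounded below one instead confines the small balls to a ball $B_{s^*}(x_n)$ with $|x_n|\to\infty$ \emph{slowly}, on a lattice of spacing $\ell_n\to0$ \emph{fast} and radius $\ell_n^2/s^*$, a Cioranescu--Murat homogenization estimate giving $\I_1(C_n)\to\I_1(B_{s^*})$ while $\int_{C_n}g\le|C_n|\sup_{B_{s^*}(x_n)}g\to0$. In both cases the rescaled $\Om$ and $C_n$ lie at distance $\to\infty$, so Lemma \ref{lemdue} gives $\I_1(\Om_n)\le\min_{t\in[0,1]}\{t^2\I_1(\Om)+(1-t)^2\I_1(B_{s^*})\}+o(1)$; since also $P(\Om_n)\to P(\Om)+2\pi s^*$ and $\int_{\Om_n}g\to\int_\Om g$, and since $\min_{s>0}\big(2\pi s+\lambda\I_1(\Om)\I_1(B_s)/(\I_1(\Om)+\I_1(B_s))\big)=2\pi\sqrt\lambda-\pi\sqrt{\lambda_\Om}$ is attained exactly at $s=s^*$, we get $\limsup_nE_\lambda(\Om_n)\le\E_\lambda(\Om)$.

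\emph{Lower bound.} Let $\Om_n\in\K_m$, $|\Om_n\Delta\Om|\to0$, with $\liminf_nE_\lambda(\Om_n)<\infty$, so $\sup_nP(\Om_n)<\infty$. Decompose $\Om_n=B_n\sqcup R_n$: $B_n$ is a ``bulk'', a compact set with uniform lower density bounds at all scales $\le r_0$ and $|B_n\Delta\Om|\to0$, obtained by discarding from $\Om_n$ both its part outside a large ball $B_R\supset\Om$ (with $R$ chosen by coarea so that $\mathcal H^1(\partial B_R\cap\Om_n^{(1)})$ is negligible) and the thin low-density appendages, using the decomposition of \cite{ACMM}; then $|R_n|\to0$ and $P(\Om_n)\ge P(B_n)+P(R_n)-o(1)$. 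Restricting the equilibrium measure $\mu_{\Om_n}$ to $B_n$ and to $R_n$ and dropping the nonnegative cross term,
\[
\I_1(\Om_n)\ \ge\ t_n^2\,\I_1(B_n)+(1-t_n)^2\,\I_1(R_n),\qquad t_n:=\mu_{\Om_n}(B_n)\in[0,1].
\]
On $R_n$, \eqref{universal} (i.e.\ $\I_1P\ge\pi^2$) with parameter $\lambda(1-t_n)^2$ gives $P(R_n)+\lambda(1-t_n)^2\I_1(R_n)\ge 2\pi\sqrt\lambda(1-t_n)$. On $B_n$: $\liminf_nP(B_n)\ge P(\Om)$ by lower semicontinuity of the perimeter; $\liminf_n\int_{B_n}g\ge\int_\Om g$ by Fatou (using $|B_n|\to m$ and $g$ bounded below); and $\liminf_n\I_1(B_n)\ge\I_1(\Om)$, because the equilibrium measures of the $B_n$'s, supported in $\overline{B_R}$, converge weakly-$*$ along a subsequence to a probability measure which is supported on $\Om$ by the density regularity of the $B_n$'s (they eventually avoid every ball disjoint from $\Om$), and the $1$-Riesz energy is weakly-$*$ lower semicontinuous. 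Passing to a subsequence with $t_n\to t$ and summing,
\[
\liminf_nE_\lambda(\Om_n)\ \ge\ P(\Om)+\int_\Om g\,dx+\lambda\,t^2\I_1(\Om)+2\pi\sqrt\lambda(1-t)\ \ge\ \E_\lambda(\Om),
\]
by the minimization recorded at the outset.

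\emph{Main difficulty.} The crux is the lower bound: isolating the bulk $B_n$ so that $\liminf_n\I_1(B_n)\ge\I_1(\Om)$ holds. Since $\I_1$ is not $L^1$-lower semicontinuous, one must strip off exactly the low-dimensional appendages into which the capacity (hence the room to lower $\I_1$) could escape, while keeping $|R_n|\to0$ and $P(\Om_n)\ge P(B_n)+P(R_n)-o(1)$; this is where the component theory of \cite{ACMM} and the density bound of Lemma \ref{l:optm} are needed. On the construction side the analogous point is that a union of balls of vanishing total area can carry the full capacity — not just the perimeter — of a genuine ball.
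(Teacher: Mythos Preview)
Your overall strategy matches the paper's: split each $\Omega_n$ into a bulk near $\Omega$ and a residual, apply the universal bound \eqref{universal} to the residual, and recombine through the scalar minimization in $t$ that you record at the outset. The upper bound is essentially the paper's construction (many small balls of fixed total perimeter and vanishing area), though your appeal to a Cioranescu--Murat homogenization for the $\tfrac12$-Laplacian is nonstandard and unnecessary: the paper places $n$ balls of radius $r/n$ in an annulus $B_{2R}\setminus B_R$, bounds $\I_1$ from above via the test measure $n^{-1}\sum_i\mu_i$ with an $O(1/R)$ cross term, sends $n\to\infty$ first (so $\int_{C_n}g\le |C_n|\,\|g\|_{L^\infty(B_{2R})}\to0$ with $R$ fixed), and only then $R\to\infty$.

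The genuine gap is in the lower bound, at the construction of the bulk $B_n$. You assert that $B_n$ can be taken with \emph{uniform lower density bounds at all scales $\le r_0$} while still having $P(\Omega_n)\ge P(B_n)+P(R_n)-o(1)$, and you attribute this to the decomposition of \cite{ACMM}. But \cite{ACMM} decomposes a set of finite perimeter into indecomposable components; it gives no pointwise density bound, and an indecomposable component (a dumbbell, say) may have arbitrarily thin necks. Lemma~\ref{l:optm} does not help either---it bounds the equilibrium \emph{measure}, not the set. Without the density bound your weak-$*$ argument collapses: the limit measure need not be supported on $\Omega$, since $B_n\setminus\Omega$ can have vanishing area yet nontrivial $\tfrac12$-capacity---exactly the phenomenon you exploit in the upper bound.

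The paper avoids this entirely by fattening $\Omega$ rather than regularizing $\Omega_n$. With $\Omega^\delta=\{\mathrm{dist}(\cdot,\Omega)\le\delta\}$, choose $\delta_n\in(\delta/2,\delta)$ by coarea so that $\mathcal H^1(\Omega_n\cap\partial\Omega^{\delta_n})\le 2\delta^{-1}|\Omega_n\Delta\Omega|$, and set $B_n:=\Omega_n\cap\Omega^{\delta_n}$, $R_n:=\overline{\Omega_n\setminus\Omega^{\delta_n}}$. Then $B_n\subset\Omega^{\delta_n}$, so monotonicity gives $\I_1(B_n)\ge\I_1(\Omega^{\delta_n})$ with no density hypothesis whatsoever; Lemma~\ref{1maggio} yields $\I_1(\Omega^{\delta_n})\to\I_1(\Omega^{\bar\delta})$ as $n\to\infty$ and then $\I_1(\Omega^{\bar\delta})\to\I_1(\Omega)$ as $\delta\to0$. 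This is the missing idea: cut $B_n$ out by a neighborhood of the \emph{target} $\Omega$, not by density-stripping the \emph{sequence} $\Omega_n$.
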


\begin{proof}
  Let $\Om_n$ be a sequence of sets in $\S_m$ such that
  $|\Om_n\Delta\Om|\to 0$ as $n\to \infty$.  For any $\delta>0$ we let
  $\Om^\delta$ as in \eqref{Omdelta}.  Notice that there exists
  $\delta_0>0$ such that $\Omega^\delta\in\K_{m+\omega(\delta)}$, for
  any $\delta\le \delta_0$, where $\omega(\delta)\to0$ as $\delta\to0$
  by the monotone convergence theorem.

  For any $n\in\mathbb N$ we let
  $\Om_n(\delta) {:=} \Om_n\cap \Om^\delta$ and
  $\widetilde\Om_n(\delta) {:=}
  \overline{\Omega_n\setminus\Om^\delta}$.  By \cite[Section II.7.1]{maggi}, we have
  \begin{equation}\label{pom}
    P(\Omega_n) \ge P(\Omega_n; {\rm int}( \Om^\delta)) +  P(\Omega_n; \R^2\setminus\Om^\delta)
    = P(\Om_n(\delta))+P(\widetilde\Om_n(\delta))
    -2\mathcal H^1(\Om_n \cap \partial \Om^\delta).    
\end{equation}
Notice that for any fixed $\delta\in (0,\delta_0)$, by Coarea Formula
\cite[Theorem 18.1]{maggi} we also have
\begin{align}
\begin{aligned}
  \int_0^{\delta}\mathcal H^1(\Om_n \cap \partial \Om^t)\,dt =
  |\Omega_n\cap (\Om^{\delta}\setminus \Omega)|\le |\Om_n\Delta\Om|,
\end{aligned}
\end{align}
Therefore we can choose $\delta_n\in (\delta/2,\delta)$ such that
\begin{align}
\mathcal H^1(\Om_n \cap \partial \Om^{\delta_n})\le \frac{2\,|\Om_n\Delta \Om|}{\delta}.
\end{align}  
Recalling \eqref{pom} this gives
\begin{equation}\label{tom}
  P(\Omega_n) \ge
  P(\Om_n({\delta_n}))+P(\widetilde\Om_n({\delta_n}))-\omega^\delta_n, 
\end{equation}
where $\omega^\delta_n \le \frac{4}{\delta}|\Om_n\Delta \Om|$.  Up to a
subsequence, we can assume that $\delta_n\to \bar \delta$ as
$n\to\infty$ for some $\bar\delta \in [\delta/2,\delta]$. Moreover, we
can choose $\delta_n$ such that
$P(\widetilde\Om_n({\delta_n}))= \mathcal H^1(\partial
\widetilde\Om_n({\delta_n}))$ (see \cite[Equation (68)]{mennucci}).

We now estimate the nonlocal term. Since
$\Omega_n\subset \Om_n(\delta_n) \cup \widetilde\Omega_n({\delta_n})$,
we have
\begin{equation}\label{tim}
\begin{aligned}
\I_1(\Omega_n) &\ge \I_1(\Om_n(\delta_n) \cup \widetilde\Omega_n({\delta_n}))\\
&\ge \min_{t\in [0,1]}\left(t^2\I_1(\Om_n(\delta_n)) + (1-t)^2\I_1(\widetilde\Omega_n(\delta_n))\right)\\
&\ge \min_{t\in [0,1]}\left(t^2\I_1(\Om^{\delta_n}) + (1-t)^2\I_1(\widetilde\Omega_n(\delta_n))\right),
\end{aligned}
\end{equation}
where the second inequality follows from Lemma \ref{lemdue}, while the
third is due to the fact that $\Omega_n(\delta_n)$ is contained in
$\Omega^{\delta_n}$ and that $\I_1$ is decreasing with respect to
{set inclusions}.

By Lemma \ref{lemtom} we have that
\begin{align}
  \mathcal H^1(\partial \widetilde\Om_n({\delta_n})) +\lambda
  (1-t)^2\I_1(\widetilde\Omega_n({\delta_n}))\ge
  2\pi(1-t)\sqrt{\lambda}\,.
\end{align}
Thus, by combining \eqref{tim} with \eqref{tom}, recalling that
$P(\widetilde\Om_n({\delta_n}))= \mathcal H^1(\partial
\widetilde\Om_n({\delta_n}))$, we obtain
\begin{eqnarray}\nonumber
  E_\lambda(\Om_n)&\ge& P(\Om_n({\delta_n})) +\int_{\Om_n} g \,dx
                        -\omega^\delta_n  + \mathcal H^1(\partial
                        \widetilde\Om_n({\delta_n})) 
  \\ \label{trump}
                  && + 
                     \lambda \min_{t\in [0,1]}\left(
                     t^2\I_1(\Om^{\delta_n}) +
                     (1-t)^2\I_1(\widetilde\Omega_n({\delta_n}))\right) 
  \\
  \nonumber
                  &\ge& P(\Om_n({\delta_n})) +\int_{\Om_n} g\, dx
                        -\omega_n^\delta +  
                        \min_{t\in [0,1]}\left(\lambda
                        t^2\I_1(\Om^{\delta_n}) +
                        2\pi(1-t)\sqrt\lambda\right)
  \\
  \nonumber
                  &=&       P(\Om_n({\delta_n})) +\int_{\Om_n} g\, dx
                      -\omega_n^\delta +  \begin{cases}
                          2\pi\sqrt\lambda - \dfrac{\pi^2}{\I_1(\Om^{\delta_n})} & \text{if
                          } \I_1(\Om^{\delta_n}) > {\pi \over \sqrt{\lambda}} 
                          \\
                          \lambda\,\I_1(\Om^{\delta_n})& \text{if }
                          \I_1(\Om^{\delta_n}) \le {\pi \over
                            \sqrt{\lambda}}  
\end{cases}.                                    
\end{eqnarray}
Therefore, thanks to the lower semicontinuity of the perimeter
  with respect to the $L^1$ convergence (notice that
  $|\Omega_n(\delta_n)\Delta\Omega^{\overline{\delta}}|\to0$ as
  $n\to+\infty$) and thanks to the semicontinuity of $\I_1$ in Lemma
  \ref{1maggio}, in the limit as $n\to \infty$ we obtain
\begin{eqnarray}\label{ricerca}
  \liminf_{n\to \infty}E_\lambda(\Om_n)\ge P(\Om^{\bar \delta}) +
  \int_\Om g\, dx +  \begin{cases}
      2\pi\sqrt\lambda - \dfrac{\pi^2}{\I_1(\Om^{\bar \delta})} & \text{if
      } \I_1(\Om^{\bar \delta}) > {\pi \over \sqrt{\lambda}} 
      \\
      \lambda\,\I_1(\Om^{\bar \delta})& \text{if } \I_1(\Om^{\bar
        \delta}) \le {\pi \over \sqrt{\lambda}} 
\end{cases},
\end{eqnarray}

Letting now $\delta \to 0$ in \eqref{ricerca}, and again using
that $\I_1(\Om^{\bar \delta})\to\I_1(\Om)$ {by Lemma \ref{1maggio}},
we finally get
\begin{eqnarray}\nonumber
  \liminf_{n\to \infty}E_\lambda(\Om_n)&\ge& P(\Om) + \int_\Om g\, dx
                                             + \begin{cases} 
    2\pi\sqrt\lambda - \dfrac{\pi^2}{\I_1(\Om)} & \text{if } \lambda> \lambda_{\Omega}
    \\
    \lambda\,\I_1(\Om)& \text{if } \lambda \le  \lambda_{\Omega}
\end{cases}
\\\label{ricercatore}
&=&{\E_\lambda}(\Om),
\end{eqnarray}
{where $\E_\lambda(\Om)$ is defined in \eqref{Erelaxeddef}.}

\smallskip

We now have to show that there exists a sequence $\Om_n$ in $\S_m$
such that $|\Omega_n\Delta \Om|\to 0$ as $n\to +\infty$ and
\begin{equation}\label{minny}
  \limsup_{n\to \infty}E_\lambda(\Om_n)\le{\E_\lambda}(\Om).
\end{equation}
Recalling Corollary \ref{cor:schmidt}, it is enough to find a sequence
$\Om_n$ in $\K_m$ with the desired properties.

If $\lambda\le \lambda_{\Omega}$ we can take $\Om_n:=\Om$ and there is
nothing to prove.  If $\lambda>\lambda_\Om$ we let $R>0$ such that
$\Om\subset B_{R/2}(0)$. Notice that, for all $n$ large enough
(depending on $R$) there exist $n$ points $x_1,\ldots,x_n$ in
$B_{2R}(0)\setminus B_R(0)$ such that $|x_i-x_j|\ge R/\sqrt n$ for all
$i\ne j$.  We then take
$\Om_n:= \rho_n\,\Om \cup\left(\cup_{i=1}^n B_{r/n}(x_i)\right)$,
where
\begin{equation}\label{rho}
r := \frac{\sqrt\lambda - \sqrt{\lambda_\Om}}{2} \qquad \text{and} \qquad \rho_n:=\sqrt{1-\frac{\pi r^2}{m\,n}}\,.
\end{equation}
Notice that with th{ese choices of $r$ and} $\rho_n$ we have that the
sets $\rho_n \Omega$ and $B_{r/n}(x_i)$ are disjoint, $|\Om_n|=m$ and
\begin{align}
{\rm dist}(\rho_n\Om, \cup_{i=1}^n B_{r/n}(x_i))\ge \frac R2 - \frac rn\ge \frac R4
\end{align}
for all $n$ large enough.  Letting $t=\sqrt{\lambda_\Om/\lambda}$,
by Lemma \ref{lemdue} we estimate  
\begin{eqnarray}\nonumber
  E_\lambda(\Om_n)
  &\le& 
        E_{\lambda t^2}(\rho_n\Om) + E_{\lambda(1-t)^2}\Big(\cup_{i=1}^n B_{r/n}(x_i)\Big)
        + \frac{2t(1-t)\lambda}{{\rm dist}(\rho_n\Om, \cup_{i=1}^n B_{r/n}(x_i))}
  \\
  &\le& E_{\lambda t^2}(\rho_n\Om) + E_{\lambda(1-t)^2}\Big(\cup_{i=1}^n B_{r/n}(x_i)\Big)
        + \frac{2\lambda}{R} \label{Elballs} \\ \nonumber
  &=& E_{\lambda_\Omega}(\rho_n\Om) + E_{(\sqrt{\lambda}-\sqrt{\lambda_\Omega})^2}\Big(\cup_{i=1}^n B_{r/n}(x_i)\Big)
      + \frac{2\lambda}{R}.
\end{eqnarray}
Let now $\mu_i$ be the equilibrium measure for $B_{r/n}(x_i)$. Then $\frac1n \sum_{i=1}^n\mu_i$ 
is an admissible measure in the definition of $\I_1(\cup_i B_{r/n}(x_i))$, so that
\begin{align}
\begin{aligned}
\I_1(\cup_{i=1}^n B_{r/n}(x_i))&\le n\cdot\frac{1}{n^2}\I_1(B_{r/n}(x_1))+\frac{1}{n^2}\sum_{i\ne j}
\int_{B_{r/n}(x_i)}\int_{B_{r/n}(x_j)}\frac{d\mu_i(x)\,d\mu_j(y)}{|x-y|}\\
&\le \frac{1}{n}\I_1(B_{r/n}(x_1))+\frac{1}{n^2}\sum_{i\ne j}\frac{1}{|x_i-x_j|-\frac{2r}{n}}\\
&\le \frac{1}{n}\I_1(B_{r/n}(x_1))+\frac{2}{n^2}\sum_{i\ne j}\frac{1}{|x_i-x_j|},
\end{aligned}
\end{align}
for $n$ large enough. Since for any $i=1,\dots,n $ we have  
\begin{align}
  \int_{B_{r/n}(x_i)}g{(y)\, dy} \le {\frac{\pi
      r^2}{n^2}\|g\|_{L^\infty(B_{2R}(0))}},
\end{align}
from \eqref{Elballs} we obtain
\begin{equation}\label{eqeqeq}
\begin{aligned}
  E_\lambda(\Omega_n)&\le E_{\lambda_\Omega}(\rho_n\Omega)+ n
    P(B_{r/n}(x_1))
  +{\frac{(\sqrt{\lambda}-\sqrt{\lambda_\Omega})^2}{n}}\I_1(B_{r/n}(x_1))
  +\frac{\pi r^2}{n}\|g\|_{L^\infty(B_{2R}(0))}\\
  &\quad+\frac{2(\sqrt{\lambda}-\sqrt{\lambda_\Omega})^2}{n^2}\sum_{i\ne
    j}\frac{1}{|x_i-x_j|}+\frac{2\lambda}{R}\\
  & =
  E_{\lambda_\Omega}(\rho_n\Omega)+2\pi r
  +\frac\pi{2r}(\sqrt{\lambda}-\sqrt{\lambda_\Omega})^2
  +\frac{\pi r^2}{n}\|g\|_{L^\infty(B_{2R}(0))}\\
  &\quad+\frac{2(\sqrt{\lambda}-\sqrt{\lambda_\Omega})^2}{n^2}\sum_{i\ne
    j}\frac{1}{|x_i-x_j|} +\frac{2\lambda}{R}\\
    & = E_{\lambda_\Omega}(\rho_n\Omega)+2\pi (\sqrt{\lambda}-\sqrt{\lambda_\Omega})
  +\frac{\pi (\sqrt{\lambda}-\sqrt{\lambda_\Omega})^2}{4n}\|g\|_{L^\infty(B_{2R}(0))}\\
  &\quad+\frac{2(\sqrt{\lambda}-\sqrt{\lambda_\Omega})^2}{n^2}\sum_{i\ne
    j}\frac{1}{|x_i-x_j|} +\frac{2\lambda}{R},
\end{aligned}
\end{equation}
where we used \eqref{rho} and the fact
that $\I_1(B_r) = {\pi \over 2r}$ (see \cite[Equation
(2.5)]{MurNovRuf}).  Notice that, since $|x_i-x_j|\ge R/\sqrt n$, we have 
\begin{eqnarray*}
  \sum_{i\ne
  j}\frac{1}{|x_i-x_j|}
  \le  \frac {C n^2}{R}\,,
\end{eqnarray*}
for some universal constant $C>0$ and $n$ large enough depending only
on $R$.  Notice also that $\rho_n\to 1$ as $n\to \infty$, so that
\begin{align}
\lim_{n\to\infty}E_{\lambda_\Omega}(\rho_n\Omega) = \lim_{n \to
  \infty} \left( \rho_n P(\Omega) + \lambda_\Omega \rho_n^{-1}
  \mathcal I_1(\Omega) + \int_{\rho_n \Omega} g(x) dx \right) =
E_{\lambda_\Omega}(\Omega),
\end{align}
where in the last term we passed to the limit using $g \in \mathcal G$
and the Dominated Convergence Theorem. {Sending} $n\to\infty$ in
\eqref{eqeqeq} we then get
\begin{eqnarray*}
\limsup_{n\to\infty}E_\lambda(\Omega_n)
&\le& E_{\lambda_\Omega}(\Omega)+
2\pi (\sqrt{\lambda}-\sqrt{\lambda_\Omega})
+\frac{2\lambda + C(\sqrt{\lambda}-\sqrt{\lambda_\Omega})^2}{R}\,.
\end{eqnarray*}
{Sending} now $R\to +\infty$, we eventually obtain \eqref{minny}
and this concludes the proof.
\end{proof}

From Proposition \ref{teorelpro} we get the following result:

\begin{proposition}\label{correl}
  The functional $E_\lambda$ is lower semicontinuous in $\K_m$ if and
  only if $\lambda\le \lambda_c(m)$.
\end{proposition}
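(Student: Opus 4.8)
The plan is to read the statement off from Proposition \ref{teorelpro} together with Corollary \ref{cor:schmidt}, after recording one elementary fact: for every $\Om\in\K_m$ one has $\lambda_\Om\ge\lambda_c(m)$, with equality when $\Om$ is a ball. Indeed, for the ball $B_r(x_0)$ of area $m$ (so $r=\sqrt{m/\pi}$) we have $\I_1(B_r(x_0))=\tfrac{\pi}{2r}$, hence $\lambda_{B_r(x_0)}=(\pi/\I_1(B_r(x_0)))^2=4r^2=\tfrac{4m}{\pi}=\lambda_c(m)$; and since the ball maximizes $\I_1$ among sets of given area (see \eqref{bm}), for general $\Om\in\K_m$ we get $\I_1(\Om)\le\I_1(B_r)$ and therefore $\lambda_\Om=(\pi/\I_1(\Om))^2\ge\lambda_c(m)$. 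I would also note that the inequality $E_\lambda(\Om)\ge\E_\lambda(\Om)$ (already recorded after Theorem \ref{teorel}) is an equality precisely when $\lambda\le\lambda_\Om$: writing $\I_1(\Om)=\pi/\sqrt{\lambda_\Om}$, for $\lambda>\lambda_\Om$ one has $E_\lambda(\Om)-E_{\lambda_\Om}(\Om)=(\lambda-\lambda_\Om)\I_1(\Om)=\pi(\sqrt\lambda-\sqrt{\lambda_\Om})(\sqrt\lambda+\sqrt{\lambda_\Om})/\sqrt{\lambda_\Om}>2\pi(\sqrt\lambda-\sqrt{\lambda_\Om})$, so $E_\lambda(\Om)>\E_\lambda(\Om)$ strictly.

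For the direction ``$\lambda\le\lambda_c(m)\Rightarrow$ lower semicontinuity'': in this case $\lambda\le\lambda_c(m)\le\lambda_\Om$ for every $\Om\in\K_m$, so by Proposition \ref{teorelpro} we have $\oE(\Om)=\E_\lambda(\Om)=E_\lambda(\Om)$ for all $\Om\in\K_m$. Now take any $\Om_n,\Om\in\K_m$ with $|\Om_n\Delta\Om|\to0$. Since $\oE(\Om)$ is, by Corollary \ref{cor:schmidt}, the infimum of $\liminf_n E_\lambda(\Om_n)$ over all such sequences in $\K_m$, the particular sequence $(\Om_n)$ satisfies $\liminf_n E_\lambda(\Om_n)\ge\oE(\Om)=E_\lambda(\Om)$, which is exactly lower semicontinuity of $E_\lambda$ at $\Om$.

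For the converse, suppose $\lambda>\lambda_c(m)$ and let $B=B_r(0)$ be the ball of area $m$, so $B\in\S_m\subset\K_m$ and $\lambda_B=\lambda_c(m)<\lambda$. By Proposition \ref{teorelpro} and the strict inequality above, $\oE(B)=\E_\lambda(B)=E_{\lambda_B}(B)+2\pi(\sqrt\lambda-\sqrt{\lambda_B})<E_\lambda(B)$. By the definition \eqref{oE} of $\oE$ there is a sequence $\Om_n\in\S_m$ with $|\Om_n\Delta B|\to0$ and $\liminf_n E_\lambda(\Om_n)<E_\lambda(B)$; passing to a subsequence realizing this $\liminf$ as a limit gives a sequence in $\K_m$ converging to $B$ in $L^1$ along which $E_\lambda$ tends to a value strictly below $E_\lambda(B)$, so $E_\lambda$ is not lower semicontinuous. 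The whole argument is essentially bookkeeping on top of Proposition \ref{teorelpro}; the only mild point is the converse direction, where one must exhibit a concrete set at which semicontinuity fails — the ball works precisely because it is the maximizer of $\I_1$ at fixed area, forcing $\lambda_B=\lambda_c(m)$, so that $\lambda>\lambda_c(m)$ puts us in the ``excess charge escapes to infinity'' regime at $\Om=B$.
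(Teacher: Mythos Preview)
Your proof is correct and follows essentially the same approach as the paper's: both use that the ball maximizes $\I_1$ at fixed area to get $\lambda_\Om\ge\lambda_c(m)$, invoke Proposition~\ref{teorelpro} to conclude $E_\lambda=\oE$ on $\K_m$ when $\lambda\le\lambda_c(m)$, and for the converse exhibit the ball as the set where $\oE(B)<E_\lambda(B)$ via the same strict inequality (your computation $(\lambda-\lambda_\Om)\I_1(\Om)>2\pi(\sqrt\lambda-\sqrt{\lambda_\Om})$ is just a rearrangement of the paper's). The only cosmetic difference is that you make the role of Corollary~\ref{cor:schmidt} in the forward direction more explicit, whereas the paper simply appeals to $E_\lambda$ coinciding with its envelope.
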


\begin{proof}
  Since $\I_1(\Om)\le \I_1(B_m) $ for any $\Om\in \K_m$ (see
  \cite[VII.7.3, p.157]{polya2}), where $B_m$ is a ball of measure
  $m$, we have that $\lambda_\Omega\ge \lambda_{B_m}=4m/\pi$, with
  equality if and only if $\Omega=B_m$. Thus, if $\lambda\le 4m/\pi$,
  the energy $E_\lambda$ coincides with its lower semicontinuous
  envelope $\overline{E_\lambda}$ by Proposition \ref{teorelpro}.  On
  the other hand, if $\lambda>\lambda_{B_m}$ then
  $\overline{E_{\lambda}}(B_m)<E_\lambda(B_m)$. Indeed, recalling
  \eqref{lambda} we have
  \begin{eqnarray*}
  \frac{E_\lambda(B_m)-\overline{E_{\lambda}}(B_m)}{\sqrt\lambda-\sqrt{\lambda_{B_m}}} &=&
  \left(\sqrt\lambda + \sqrt{\lambda_{B_m}}\right)\I_1(B_m)-2\pi \\
 &=& 
 \left(\sqrt\lambda + \frac{\pi}{\I_1(B_m)}\right)\I_1(B_m)-2\pi
 \\
 &=& \sqrt\lambda\,\I_1(B_m)-\pi =  \left(\sqrt\lambda - \sqrt{\lambda_{B_m}}\right)\I_1(B_m) >0.
  \end{eqnarray*}
  In particular, $E_\lambda$ is not lower semicontinuous for $\lambda>\lambda_{B_m}$.
\end{proof}

{Lastly,} Theorem \ref{teorel} directly follows from Proposition
\ref{teorelpro} and Proposition \ref{correl}.

\section{Existence of minimizers: Proof of Theorem \ref{teoexistence} }\label{teo2}

In this section we show existence of minimizers of $E_\lambda$ under
suitable assumptions on $\lambda$ and on the function $g$. We start
with a simple existence result for minimizers of the relaxed energy
$\overline{E}_\lambda$.

\begin{proposition}\label{existence}
  Let $g\in\mathcal G$. Then $\overline{E}_\lambda$
  admits a minimizer $\Omega_\lambda$ over $\mathcal A_m$ for every
  $\lambda > 0$.
\end{proposition}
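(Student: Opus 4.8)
The plan is to run the direct method, exploiting two facts: that $\overline{E}_\lambda$, being a relaxation, is automatically $L^1$-lower semicontinuous on $\A_m$, and that the energy controls the perimeter while the coercivity of $g$ prevents loss of mass at infinity. First I would record that $\inf_{\A_m}\overline{E}_\lambda=\inf_{\S_m}E_\lambda=:\mu$. The inequality $\le$ follows by using constant sequences in \eqref{oE}; for the reverse, for every $\Omega\in\A_m$ and $\eps>0$ there is $\Omega_k\in\S_m$ with $|\Omega_k\Delta\Omega|\to0$ and $\liminf_k E_\lambda(\Omega_k)\le\overline{E}_\lambda(\Omega)+\eps$, so $\inf_{\S_m}E_\lambda\le\overline{E}_\lambda(\Omega)+\eps$ for all $\Omega$. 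Since a ball $B_m\in\S_m$ of measure $m$ has $E_\lambda(B_m)<+\infty$ (as $g\in\mathcal G$ is locally bounded, hence integrable on $B_m$), this common value $\mu$ is finite. Then I pick a minimizing sequence $\Omega_n\in\S_m$ with $E_\lambda(\Omega_n)\to\mu$, and assume without loss of generality $E_\lambda(\Omega_n)\le\mu+1=:C$ for all $n$.

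Next I would extract a convergent subsequence. Since $g\ge0$ and $\I_1\ge0$, we have $P(\Omega_n)+\int_{\Omega_n}g(x)\,dx\le E_\lambda(\Omega_n)\le C$, so the perimeters $P(\Omega_n)$ and the quantities $\int_{\Omega_n}g$ are uniformly bounded. By coercivity of $g$, for every $\eps>0$ there is $R_\eps>0$ with $g\ge C/\eps$ on $\R^2\setminus B_{R_\eps}(0)$, whence $|\Omega_n\setminus B_{R_\eps}(0)|\le\eps$ for all $n$; thus $(\chi_{\Omega_n})$ is tight. Combining this with the compact embedding $BV_{\mathrm{loc}}\hookrightarrow L^1_{\mathrm{loc}}$ (applicable by the uniform perimeter bound), along a subsequence $\chi_{\Omega_n}\to\chi_{\Omega_\lambda}$ in $L^1_{\mathrm{loc}}(\R^2)$ for some measurable set $\Omega_\lambda$; tightness upgrades this to convergence in $L^1(\R^2)$, and together with $|\Omega_n|=m$ it yields $|\Omega_\lambda|=m$, i.e. $\Omega_\lambda\in\A_m$.

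Finally I would conclude by lower semicontinuity. The functional $\overline{E}_\lambda$ is $L^1$-lower semicontinuous on $\A_m$: given $\Omega^{(j)}\to\Omega$ in $L^1$ with $\Omega^{(j)}\in\A_m$, a diagonal argument over the sequences defining $\overline{E}_\lambda(\Omega^{(j)})$ in \eqref{oE} produces $\widetilde\Omega_j\in\S_m$ with $|\widetilde\Omega_j\Delta\Omega|\to0$ and $\liminf_j E_\lambda(\widetilde\Omega_j)\le\liminf_j\overline{E}_\lambda(\Omega^{(j)})$, hence $\overline{E}_\lambda(\Omega)\le\liminf_j\overline{E}_\lambda(\Omega^{(j)})$. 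Applying this along $\Omega_n\to\Omega_\lambda$, and using $\overline{E}_\lambda(\Omega_n)\le E_\lambda(\Omega_n)$ (constant sequence in \eqref{oE}), we obtain $\overline{E}_\lambda(\Omega_\lambda)\le\liminf_n\overline{E}_\lambda(\Omega_n)\le\lim_n E_\lambda(\Omega_n)=\mu=\inf_{\A_m}\overline{E}_\lambda$, so $\Omega_\lambda$ is the desired minimizer.

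The only genuinely delicate point is the control of mass at infinity in the extraction step; this is precisely what the coercivity assumption \eqref{coerc} on $g$ delivers, and everything else (the infimum identity, the perimeter bound, the lower semicontinuity of the relaxation) is routine. Note that local Lipschitz continuity of $g$ is not needed here — positivity (or boundedness below) together with coercivity suffices — that regularity being used only later for the existence of minimizers within $\K_m$ and for the density estimates.
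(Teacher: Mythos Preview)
Your proof is correct and follows essentially the same direct-method argument as the paper: bound the perimeter and the potential term along a minimizing sequence, use $BV$-compactness to extract $L^1_{\mathrm{loc}}$ convergence, upgrade to $L^1$ via the coercivity of $g$, and conclude by lower semicontinuity of the relaxation. The only cosmetic differences are that the paper takes its minimizing sequence directly in $\A_m$ (using $P(\Omega)\le\overline{E}_\lambda(\Omega)$ and $\int_\Omega g\le\overline{E}_\lambda(\Omega)$ rather than your preliminary identity $\inf_{\A_m}\overline{E}_\lambda=\inf_{\S_m}E_\lambda$), and simply asserts lower semicontinuity of $\overline{E}_\lambda$ ``by definition'' where you spell out the diagonal argument; your closing remark that local Lipschitz continuity of $g$ is not needed here is also accurate.
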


\begin{proof}
  Let $\Omega_k$ be a minimizing sequence for
  $ \overline E_{\lambda}(\Omega) $.  Notice that $P(\Omega_k)<c$ for
  some positive constant $c$ independent of $k$. Letting
  $\Omega_k^R:=\Omega_k\cap B_{R}(0)$, we have
  $P(\Omega_k^R)\le P(\Omega_k)+P(B_R(0) )\le c+{2 \pi R}$. Thus, by
  the compactness of the immersion of $BV(B_R(0))$ into $L^1(B_R(0))$,
  applied to the sequence $\chi_{\Omega_{k}^R}$, we get that there
  exists a set $\Omega^R {\subset B_R(0)}$ such that
  $\chi_{\Omega_k^R}\to\chi_{\Omega^R}$ in $L^1$, up to a (not
  relabeled) subsequence, as $k\to +\infty$.  {Sending}
  $R\to +\infty$, by a diagonal argument we get that there exists
  $\Omega_\lambda \subset \R^2$ such that, up to extracting a further
  subsequence, the functions $\chi_{\Omega_k}$ converge to
  $\chi_{\Omega_\lambda}$ in $L^1_{\rm loc}(\R^{2})$.
  
    Now we observe that,
  since $\Omega_k$ is a minimizing sequence for $\overline E_\lambda$,
 there exists $C>0$ such that, for all
    $R > 0$ large enough, we have
    \begin{align}
  |\Omega_k\setminus B_R(0) |\inf_{{x \in B_R^c(0)} } g(x)\le
    \int_{\Omega_k\setminus B_R(0) }g(x)\,{dx} \le \int_{\Omega_k}g(x)\, {dx}\le C,
\end{align}
so that 
\begin{equation}\label{revolver}
  |\Omega_k\setminus B_R(0) |\le \frac{C}{\inf_{{x \in B_R^c(0)} }g(x)}\,. 
\end{equation}
In particular, by \eqref{coerc} 
for any $\eps>0$ there exists $R_\eps>0$ such that  $|\Omega_k\setminus B_{R_\eps}(0)|\le\eps$ for all $k$.
Thus, recalling the convergence of $\chi_{\Omega_k}$ to $\chi_{\Omega_\lambda}$ in $L^1_{\rm loc}(\R^{2})$ as $k\to \infty$,
there also exists $k_\eps\in\mathbb N$ such that 
\begin{align}
  |\Omega_k\Delta\Omega_\lambda|=|(\Omega_k\Delta\Omega_\lambda)\cap B_{R_\eps}(0)|
  +|(\Omega_k\Delta\Omega_\lambda)\setminus B_{R_\eps}(0)|\le 2\eps,
\end{align}
for all $k\ge k_\eps$, that is, the sequence $\chi_{\Omega_k}$ converges to 
$\chi_{\Omega_\lambda}$ in $L^1(\R^{2})$ as $k\to \infty$.

Since, by definition, $\overline E_\lambda$ is
lower semicontinuous in $L^1(\R^{2})$, we eventually get that $\Omega_\lambda$ is a
minimizer of $\overline E_\lambda$.
\end{proof}

The main difficulty in proving Theorem \ref{teoexistence} is to show
that the minimizer $\Omega_\lambda$ is indeed an element of $\K_m$, so
that it is also a minimizer of $E_\lambda$ by Proposition
\ref{teorelpro}.

We first show that ${\rm cap}_1(\Omega)$ depends continuously on
smooth perturbations of $\Omega$, where $\Om\subset\R^2$ is a compact
set with Lipschitz boundary.

\begin{lemma}\label{l4missing}
  Let $\Om\subset\R^2$ be a compact set with positive measure and Lipschitz boundary.
  Let $\eta\in W^{1,\infty}(\R^2, \R^2)$, let
  $\Phi_t(x):=x+t\eta(x)$ be the corresponding family of (Lipschitz) diffeomorphims, defined for $t\in (-t_0,t_0)$
  and $t_0$ sufficiently small,
  and let $\Om_t:=\Phi_t(\Om)$. 
  
  Then, for $t\in (-t_0,t_0)$ there holds
  \begin{eqnarray}\label{capdue}
   {\rm  cap}_1(\Omega)\le {\rm  cap}_1(\Omega_t) (1+Ct),
  \end{eqnarray}
 where the constant $C>0$ depends only on the $W^{1,\infty}$-norm of $\eta$.
\end{lemma}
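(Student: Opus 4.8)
The plan is to use the variational characterization \eqref{capcap} of ${\rm cap}_1$ and to pull back, through $\Phi_t$, a near-optimal test function for ${\rm cap}_1(\Omega_t)$, thereby producing a competitor for ${\rm cap}_1(\Omega)$. Fix $t\in(-t_0,t_0)$ and $\varepsilon>0$, and pick a nonnegative, compactly supported Lipschitz function $u$ with $u\ge\chi_{\Omega_t}$ and $\|u\|^2_{\mathring{H}^{\frac12}(\R^2)}\le{\rm cap}_1(\Omega_t)+\varepsilon$ (such $u$ exist, since the $C^1_c$ competitors in \eqref{capcap} may be truncated at $0$ without increasing the Gagliardo seminorm). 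Set $\tilde u:=u\circ\Phi_t$. Since $\Phi_t$ is a homeomorphism of $\R^2$ with $\Phi_t(\Omega)=\Omega_t$, for every $x\in\R^2$ we have $\tilde u(x)=u(\Phi_t(x))\ge\chi_{\Omega_t}(\Phi_t(x))=\chi_{\Omega}(x)$, and $\tilde u$ is again nonnegative, Lipschitz and compactly supported. Granting that such $\tilde u$ may be used to bound ${\rm cap}_1(\Omega)$ from above (see the last paragraph), we get ${\rm cap}_1(\Omega)\le\|\tilde u\|^2_{\mathring{H}^{\frac12}(\R^2)}$, so that the lemma reduces to the inequality $\|\tilde u\|^2_{\mathring{H}^{\frac12}(\R^2)}\le(1+C|t|)\,\|u\|^2_{\mathring{H}^{\frac12}(\R^2)}$.

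To prove this inequality, I perform in the Gagliardo double integral the bi-Lipschitz change of variables $x=\Phi_t^{-1}(\xi)$, $y=\Phi_t^{-1}(\zeta)$, obtaining
\[
\|\tilde u\|^2_{\mathring{H}^{\frac12}(\R^2)}=\frac1{4\pi}\int_{\R^2}\int_{\R^2}\frac{|u(\xi)-u(\zeta)|^2}{|\Phi_t^{-1}(\xi)-\Phi_t^{-1}(\zeta)|^3}\,|\det D\Phi_t^{-1}(\xi)|\,|\det D\Phi_t^{-1}(\zeta)|\,d\xi\,d\zeta.
\]
Because $\eta\in W^{1,\infty}(\R^2,\R^2)$ and $\Phi_t=\mathrm{Id}+t\eta$, for $t_0$ small $\Phi_t$ is bi-Lipschitz with $|\det D\Phi_t^{-1}|\le 1+C|t|$ a.e.\ and $|\Phi_t^{-1}(\xi)-\Phi_t^{-1}(\zeta)|\ge(1-C|t|)\,|\xi-\zeta|$ for all $\xi,\zeta\in\R^2$, the constant $C$ depending only on $\|\eta\|_{W^{1,\infty}}$. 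Inserting these three bounds into the integral yields
\[
\|\tilde u\|^2_{\mathring{H}^{\frac12}(\R^2)}\le\frac{(1+C|t|)^2}{(1-C|t|)^3}\cdot\frac1{4\pi}\int_{\R^2}\int_{\R^2}\frac{|u(\xi)-u(\zeta)|^2}{|\xi-\zeta|^3}\,d\xi\,d\zeta\le(1+C'|t|)\,\|u\|^2_{\mathring{H}^{\frac12}(\R^2)},
\]
for $t_0$ small enough, with $C'$ still depending only on $\|\eta\|_{W^{1,\infty}}$. Combining with the first paragraph and letting $\varepsilon\to0^+$ gives ${\rm cap}_1(\Omega)\le(1+C'|t|)\,{\rm cap}_1(\Omega_t)$, which is \eqref{capdue} up to renaming the constant (and the sign of $t$ plays no role).

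The only point that is not bookkeeping is that $\tilde u$ is merely Lipschitz, not $C^1$, hence a priori not an admissible competitor in \eqref{capcap}. I would handle this by observing that any nonnegative, compactly supported Lipschitz function $w$ with $w\ge\chi_\Omega$ satisfies ${\rm cap}_1(\Omega)\le\|w\|^2_{\mathring{H}^{\frac12}(\R^2)}$: indeed, for $\delta,\sigma>0$ the mollification $w_{\delta,\sigma}:=(1+\delta)\,(w*\rho_\sigma)$, with $\rho_\sigma$ a standard mollifier, belongs to $C^\infty_c(\R^2)$, satisfies $w_{\delta,\sigma}\ge\chi_\Omega$ once $\sigma$ is small enough (because $w*\rho_\sigma\to w\ge1$ uniformly on the compact set $\Omega$), and has $\|w_{\delta,\sigma}\|_{\mathring{H}^{\frac12}(\R^2)}\le(1+\delta)\,\|w\|_{\mathring{H}^{\frac12}(\R^2)}$ by Jensen's inequality applied to the convolution and translation invariance of the seminorm; letting $\sigma\to0^+$ and then $\delta\to0^+$ gives the claim (see also \cite{DPV,landkof}). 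Finally, that $\Phi_t$ is a bi-Lipschitz diffeomorphism of $\R^2$ for $|t|<t_0$ with the quantitative bounds used above is immediate from $D\Phi_t=I+tD\eta$ with $D\eta\in L^\infty$, and is in fact part of the hypotheses.
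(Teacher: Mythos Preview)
Your proof is correct and follows essentially the same approach as the paper: compose a (near-)optimal test function for ${\rm cap}_1(\Omega_t)$ with $\Phi_t$ to obtain a competitor for ${\rm cap}_1(\Omega)$, then change variables in the Gagliardo double integral and use the bi-Lipschitz estimates $|\det D\Phi_t^{-1}|\le 1+C|t|$ and $|\Phi_t^{-1}(\xi)-\Phi_t^{-1}(\zeta)|\ge(1-C|t|)|\xi-\zeta|$. The only difference is that the paper works directly with the capacitary potential $u_t$ of $\Omega_t$ rather than an $\varepsilon$-near minimizer, and glosses over the regularity issue you handle carefully via mollification.
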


\begin{proof}
  Let $u_t$ be the $\frac 12$-capacitary potential of $\Om_t$
  minimizing \eqref{capcap} with $\Om$ replaced by $\Om_t$, and let
  $u:=u_t\circ \Phi_t$. Notice that $u$ is an admissible function for
  the minimum problem \eqref{capcap}. In particular, we have
\begin{equation}\label{tt}
{\rm  cap}_1(\Omega)\le \frac{1}{4\pi}\int_{\R^2}\int_{\R^2}\frac{|u(x)-u(y)|^2}{|x-y|^3}\,dxdy\,.
\end{equation}
We now compute
\begin{eqnarray}\label{tata}
\int_{\R^2}\int_{\R^2}\frac{|u(x)-u(y)|^2}{|x-y|^3}\,dxdy &=& 
\int_{\R^2}\int_{\R^2}\frac{|u_t(\Phi_t(x))-u_t(\Phi_t(y))|^2}{|x-y|^3}\,dxdy
\\ \nonumber
&=& \int_{\R^2}\int_{\R^2}\frac{|u_t(X)-u_t(Y)|^2}{|\Phi_t^{-1}(X)-\Phi_t^{-1}(Y)|^3}
\,\left|{\rm det}\nabla\Phi_t^{-1}(X)\right| \left|{\rm det}\nabla\Phi_t^{-1}(Y)\right|\,dXdY,
\end{eqnarray} 
where we performed the change of variables $X=\Phi_t(x)$, $Y=\Phi_t(y)$.
Observing that 
\begin{align}
|{\rm det}\nabla\Phi_t^{-1}(X) - 1| \le Ct\qquad \text{and}\qquad 
|\Phi_t^{-1}(X)-\Phi_t^{-1}(Y)| \ge (1-Ct)|X-Y|, 
\end{align}
where $C>0$ depends only on the $W^{1,\infty}$-norm of $\eta$,
from \eqref{tt} and \eqref{tata} we readily obtain \eqref{capdue}.
\end{proof}

From Lemma \ref{l4missing} and \eqref{capuno}, we immediately get the following result:

\begin{corollary}\label{cor4missing}
Under the assumptions of Lemma \ref{l4missing}, there holds
  \begin{equation}\label{capiuno}
  \I_1(\Omega_t)\le \I_1(\Omega) (1+Ct),
  \end{equation}
   where the constant $C>0$ depends only on the $W^{1,\infty}$-norm of $\eta$.
\end{corollary}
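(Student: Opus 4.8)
The plan is to deduce the corollary directly from the identity \eqref{capuno}, which rewrites $\I_1$ as $2\pi$ divided by the $\frac12$-capacity, together with the one-sided comparison \eqref{capdue} established in Lemma \ref{l4missing}. First I would check that all the quantities involved are legitimate: under the hypotheses of Lemma \ref{l4missing} both $\Omega$ and $\Omega_t$ (for $t\in(-t_0,t_0)$, with $t_0$ small) are compact sets with Lipschitz boundary and positive Lebesgue measure, so $\mathcal H^1(\partial\Omega)+\mathcal H^1(\partial\Omega_t)<+\infty$ and, by Lemma \ref{l:optm}, $\I_1(\Omega),\I_1(\Omega_t)<+\infty$; moreover positivity of the two-dimensional measure forces $\mathcal H^1(\Omega)=\mathcal H^1(\Omega_t)=+\infty$, hence ${\rm cap}_1(\Omega),{\rm cap}_1(\Omega_t)>0$ (recall ${\rm cap}_1$ vanishes only on sets with $\mathcal H^1$ finite). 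Consequently \eqref{capuno} applies to both sets and the reciprocals below make sense.

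The remaining step is a one-line manipulation. Substituting ${\rm cap}_1(\Omega)=2\pi/\I_1(\Omega)$ and ${\rm cap}_1(\Omega_t)=2\pi/\I_1(\Omega_t)$ into \eqref{capdue} gives
\[
\frac{2\pi}{\I_1(\Omega)}\ \le\ \frac{2\pi}{\I_1(\Omega_t)}\,(1+Ct),
\]
and multiplying through by $\I_1(\Omega)\,\I_1(\Omega_t)/(2\pi)$ yields $\I_1(\Omega_t)\le\I_1(\Omega)(1+Ct)$, which is precisely \eqref{capiuno}; the constant $C$ is literally the one produced in Lemma \ref{l4missing}, depending only on $\|\eta\|_{W^{1,\infty}(\R^2)}$. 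Equivalently, one may observe that $s\mapsto 2\pi/s$ is decreasing, so \eqref{capdue} is the same as ${\rm cap}_1(\Omega_t)\ge{\rm cap}_1(\Omega)/(1+Ct)$, and applying this decreasing map to both sides gives the claim. There is no genuine obstacle here: the analytic content — the behaviour of the Gagliardo seminorm under the Lipschitz change of variables $\Phi_t$ — has already been absorbed into Lemma \ref{l4missing}, and all that remains is the algebraic passage through \eqref{capuno}.
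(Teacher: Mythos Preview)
Your proof is correct and follows exactly the route indicated in the paper: the corollary is stated as an immediate consequence of Lemma \ref{l4missing} together with \eqref{capuno}, and you have spelled out precisely that one-line substitution. The additional care you take in checking that ${\rm cap}_1$ is strictly positive (so that the reciprocals make sense) is a welcome bit of rigor that the paper leaves implicit.
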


We now show that, if $\lambda<\lambda_c(m)$, we can decrease the
energy of a set $\Om\in \K_m$ by reducing the number of its connected
components and holes.

\begin{proposition}\label{probounded}
  Let $\lambda<\lambda_c(m)$ and $g\in\mathcal G$. Then, for any
  $\Om\in \K_m$ we can find $\widetilde\Omega\in \K_m$ such that
  $E_\lambda(\widetilde\Omega)\le E_\lambda(\Omega)$,
  $\widetilde\Omega\subset B_R(0)$, and the numbers of connected
  components of both $\widetilde \Omega$ and of $\widetilde \Omega^c$
  are bounded above by $N$, where the numbers $R,\,N$ depend only on
  $\lambda, m, g$ and $E_\lambda(\Omega)$.
\end{proposition}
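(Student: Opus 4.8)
The plan is to reduce, in a controlled way, both the number of connected components of $\Omega$ and the number of connected components of $\Omega^c$ (i.e.\ its holes), and then to bound the diameter of the resulting set, each time without increasing $E_\lambda$. First I would invoke the decomposition of $\mathring\Omega^M$ into indecomposable components $\Omega_i$ recalled in Section~\ref{preliminari}, with $|\Omega| = \sum_i |\Omega_i|$, $P(\Omega) = \sum_i P(\Omega_i)$, and $\I_1(\Omega) = \I_1(\mathring\Omega^M)$. Since $P(\Omega) \le E_\lambda(\Omega)$ is bounded, only finitely many components $\Omega_i$ can have $P(\Omega_i) \ge \eps_0$ for any fixed $\eps_0 > 0$; the issue is the tail of small components. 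The key mechanism is Lemma~\ref{distributionofcharges}: if we discard a component $V$ of measure $|V|$ and rescale the remaining set $U = \Omega \setminus V$ to restore mass $m$, the perimeter change is $\sim -P(V) + P(U)\big(\sqrt{m/(m-|V|)}-1\big) \approx -P(V) + \tfrac{P(U)}{2m}|V|$, the potential term changes by a comparably small amount (using $g \in \mathcal G$ and that $\Omega$, once we have an a priori diameter bound or a provisional localization, lies in a fixed ball where $\|g\|_\infty$ is controlled), and by Lemma~\ref{distributionofcharges} the Coulombic term satisfies $\lambda\,\I_1(U) \le \lambda\,\I_1(\Omega) + \tfrac{\pi\lambda}{4|U|}P(V)$. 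Since $\lambda < \lambda_c(m) = 4m/\pi$, we have $\tfrac{\pi\lambda}{4|U|} < \tfrac{m}{|U|}$, which for $|V|$ small (so $|U|$ close to $m$) is strictly less than $1$, and this strict gap is exactly what lets the gain $-P(V)$ in the perimeter dominate both the capacitary loss $+\tfrac{\pi\lambda}{4|U|}P(V)$ and the lower-order perimeter and potential corrections. Hence removing any sufficiently small component strictly decreases $E_\lambda$, or at least does not increase it; iterating and taking the infimum over this procedure, one arrives (via a limiting/compactness argument, using lower semicontinuity of $\overline E_\lambda$ as in Proposition~\ref{existence}) at a configuration $\widetilde\Omega$ with a bounded number $N_1$ of components, where $N_1$ depends only on $\lambda, m, g, E_\lambda(\Omega)$ through the total perimeter budget and the threshold on $|V|$.

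Second, I would treat the holes, i.e.\ the bounded connected components of $\widetilde\Omega^c$. Filling a hole $H$ can only decrease the perimeter (by $P(H)$, or more precisely does not increase $P$), does not increase $\I_1$ since $\I_1$ is monotone under set inclusion, but increases the mass by $|H|$ and changes the potential term by $\int_H g$. After filling we rescale down to mass $m$; the rescaling increases $\I_1$ by a factor controlled by $\sqrt{m/(m+|H|)}$ close to $1$, and decreases $P$. Again the strict subcriticality $\lambda < \lambda_c(m)$ gives room: the argument is symmetric to the component-removal step, with the roles of $P(V)$ replaced by $P(H)$ and a Lemma~\ref{distributionofcharges}-type estimate applied after the monotonicity bound. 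Only finitely many holes can have $P(H) \ge \eps_1$, and small holes can be filled with a net energy decrease, bounding the number of holes by $N_2$. Set $N := \max(N_1, N_2)$.

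Third, for the diameter bound I would argue that once $\widetilde\Omega$ has at most $N$ components each of which can be taken to be indecomposable with rectifiable Jordan outer boundary, each component has diameter at most $\tfrac12 P(\widetilde\Omega) \le \tfrac12 E_\lambda(\widetilde\Omega) \le \tfrac12 E_\lambda(\Omega)$, so each component is individually bounded; and the components must be located near a minimum region of $g$, since translating a far-away component toward the bulk lowers $\int g$ (this uses coercivity of $g$ exactly as in the estimate \eqref{revolver}), while leaving $P$ and $\I_1$ of that component unchanged (translation invariance) — and moving a component to within a bounded distance of the others does not increase the cross Coulombic interaction beyond a controlled amount, by Lemma~\ref{lemdue} applied with the distance appearing in the denominator only improving. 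This pins $\widetilde\Omega$ inside a ball $B_R(0)$ with $R$ depending only on $m, \lambda, g, E_\lambda(\Omega)$.

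The main obstacle is making the iterative removal/filling procedure rigorous as a genuine minimization rather than an infinite sequence of strict decreases that might not terminate: one must either show each step decreases the energy by a fixed amount (not possible, since the gains shrink with the component size), or — the approach I favor — pass directly to an infimum of $E_\lambda$ over the class of sets obtained from $\Omega$ by the allowed operations, extract a minimizing sequence, apply the compactness and lower-semicontinuity of $\overline E_\lambda$ from Proposition~\ref{existence} to get a limit $\widetilde\Omega$, and then verify a posteriori that the limit cannot have infinitely many components or holes, because the structural estimates above (each small component/hole being strictly removable) would contradict minimality. Care is needed because the limit is a priori only in $\mathcal A_m$, not $\mathcal K_m$; but the relaxation formula from Theorem~\ref{teorel} together with $\lambda < \lambda_c(m)$ forces $\overline E_\lambda = E_\lambda$, so the limit does land back in a set where $E_\lambda(\widetilde\Omega) \le E_\lambda(\Omega)$ with the desired finiteness and boundedness, and the bookkeeping of how $N$ and $R$ depend on the data must be kept uniform throughout.
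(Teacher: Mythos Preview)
Your core mechanism---use Lemma~\ref{distributionofcharges} together with the strict subcriticality $\lambda<\lambda_c(m)$ so that the perimeter saved by discarding small pieces beats the capacitary loss---is the same as the paper's, and your treatment of holes is essentially right. The gaps are in the execution.

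First, the compactness fallback is circular. Proposition~\ref{probounded} is precisely the ingredient the paper needs in the proof of Theorem~\ref{teoexistence} to upgrade an $L^1$-limit from $\A_m$ back into $\K_m$ (via Golab's theorem, which requires a uniform bound on the number of boundary components); you cannot invoke that conclusion inside the proof of Proposition~\ref{probounded}. And Theorem~\ref{teorel} only identifies $\oE=E_\lambda$ on $\K_m$, so it does not by itself place your limit there. The paper avoids iteration altogether: ordering the components by decreasing measure, the isoperimetric inequality gives $m_i\le C/i^2$, so all components beyond a fixed index $N_\eps$ carry total mass $\le\eps$, and one removes them in a \emph{single} step, choosing $\eps$ once so that the comparison closes.

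Second, your diameter argument contains a genuine error: $\I_1(\Omega)$ is a functional of the whole set, not a sum over components, so translating one component does \emph{not} leave $\I_1$ unchanged---moving pieces closer typically increases $\I_1$, and the upper bound in Lemma~\ref{lemdue} deteriorates as the distance shrinks, contrary to what you write. The paper obtains boundedness \emph{first}, directly from coercivity of $g$ as in~\eqref{revolver}: most of the mass already lies in a fixed ball $B_{\bar R}(0)$, so after selecting the $N_\eps$ largest components one also truncates by intersecting with a slightly larger ball (at a radius with negligible slice). Mass is then restored not by a global rescaling but by a \emph{localized} dilation $\Phi_t(x)=(1+t\varphi(|x|))x$ with a cutoff supported in $B_{2\bar R}(0)$; this is why Corollary~\ref{cor4missing} (the Lipschitz-deformation estimate for $\I_1$) enters, and why the $g$-term stays under control---only the portion of the set inside $B_{2\bar R}(0)$ moves, where $g$ is Lipschitz. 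Your global rescaling would displace far-away pieces by arbitrarily large amounts before any diameter bound is available, and you would lose control of $\int_\Omega g$.
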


\begin{proof}
We divide the proof into two steps.

\smallskip

\noindent\emph{Step 1: Construction of a uniformly bounded set with a uniformly bounded number of connected components.}

\noindent Let $\Omega_i$ be the connected components of $\Omega$, and
up to a relabeling we can suppose that if $m_{i}:=|\Omega_{i}|$, then
$m_{i}\ge m_{i+1}$.
Let $\eps\in (0,m/2)$. We claim that there
exists $N_\eps\in\mathbb N$ depending only on $\eps$ and $m$ such that
\begin{equation}\label{esclaim}
  \left| \Omega\setminus \bigcup_{i>N_\eps} \Omega_i \right| \ge
  m-\frac\eps 2>\frac 34 m. 
\end{equation}
Indeed, we have ${\sum_{i=1}^\infty} m_{i}= m$,
and by the isoperimetric inequality we get
\begin{align}
  {\sum_{i=1}^\infty} \sqrt{{4 \pi} m_{i}} \le \sum_{i=1}^\infty
  P(\Omega_i)\le E_\lambda(\Om).
\end{align}
Recalling that the sequence $i\mapsto m_{i}$ is decreasing, it
follows that 
\begin{align}
  m_{i}\le \frac{E_\lambda ^2 (\Om)}{4\pi i^2}.
\end{align}
Hence there exists $C>0$ depending only on $m, g$ and $E_\lambda(\Om)$ such that
\begin{align}
\sum_{i\ge k} m_{i}\le\frac{C}{k},
\end{align}
so that \eqref{esclaim} holds for $N_\eps\ge 2C/\eps$.

Let us set
\begin{align}
U_\eps:=\bigcup_{i=1}^{N_\eps} \Omega_i.
\end{align}
We claim that there exists $\bar R\ge 1$, depending only on $m, g$ and $E_\lambda(\Om)$ such that 
\begin{equation}\label{A}
|U_\eps \cap B_{\bar R}(0)|\ge\frac 23 m.
\end{equation}
Notice that the previous equation implies in particular that 
\begin{equation}\label{B}
|U_\eps \setminus B_{\bar R}(0)|\le\frac 13 m.
\end{equation}  
Indeed, reasoning as in the proof of  Proposition \ref{existence}, for any $R>0$ we can write
\begin{align}
E_\lambda(\Om)\ge\int_{\Omega\setminus B_R(0) }g\, {dx} \ge |\Omega\setminus
B_R(0) | \,  {\inf_{{x \in B_R^c(0)} }g(x)}, 
\end{align}
so that  
\begin{align}
  |\Omega\setminus B_R(0) |\le \frac{E_\lambda(\Om)}{{\inf_{{x \in
          B_R^c(0)} }g(x)}}.
\end{align}
Take now $\bar R\ge 1$ such that
$\frac{E_\lambda(\Om)}{{\inf_{{x \in B_{\bar R}^c(0)} }g(x)}}\le\frac{m}{12}$.
Such a radius exists in view of
the coercivity of $g$.  Then we have
\begin{align}
  |U_\eps\cap B_{\bar R}(0)|\ge |U_\eps| - \frac{m}{12} >
  \frac{3}{4}m- \frac{m}{12} = \frac{2}{3}m,
\end{align}
which gives \eqref{A}.

By the same argument, there exists $R_\eps\ge 2\bar R$ such that
$|U_\eps \cap B_{R_\eps}(0)|\ge m-\eps$. Moreover, since
$P(U_\eps)\le E_\lambda(\Om)$, we can also find a radius
$R_\eps^n\in [R_\eps,R_\eps']$, with
  $R_\eps' := R_\eps+2E_\lambda(\Om)$, such that
$\mathcal H^1(U_\eps \cap \partial B_{R_\eps^n}(0))=0$.

Let now $\varphi:\R\to\R$ be {a cutoff function} defined as
\begin{align}
\varphi(s) := \left\{\begin{array}{ll}
1 & \text{if }|s|\le \bar R
\\
2 - \dfrac{|s|}{\bar R} & \text{if }\bar R<|s|\le 2\bar R
\\
0 & \text{if }|s|\ge 2\bar R.
\end{array}\right.
\end{align}
For $t\ge 0$, we let $\Phi_t(x):= (1 + t\varphi(|x|)) x$ and we
notice that 
\begin{align}
{\rm det}\,\nabla \Phi_t(x) = (1 + t\varphi(|x|))^2 + 
t|x|\varphi'(|x|) {+ t^2 |x| \varphi(|x|) \varphi'(|x|)} \,.
\end{align} 
In particular, the map
  $t\mapsto |\Phi_t(A)| = \int_A {\rm det}\,\nabla \Phi_t(x) \,dx $ is
  continuous for every set $A \subset \R^2$ of finite
  measure. Recalling \eqref{A}, \eqref{B} and letting
$\widetilde U_\eps := U_\eps\cap B_{R^n_\eps}(0)$, we have 
\begin{eqnarray*}
  |\Phi_t(\widetilde U_\eps)|
  &=& \int_{\widetilde U_\eps} {\rm
      det}\,\nabla \Phi_t(x) \,dx \,
      = \int_{\widetilde U_\eps} \left[(1 + t\varphi(|x|))^2 +
      t|x|\varphi'(|x|)+ t^2 |x| \varphi(|x|) \varphi'(|x|)\right]\,dx  
  \\
  &\geq&
              |\widetilde U_\eps| + (2t + t^2) |U_\eps\cap B_{\bar R}(0)|
              - \frac{t+t^2}{\bar R} \int_{U_\eps\cap B_{2\bar R}(0)\setminus B_{\bar R}(0)} |x|\,dx
  \\
  &\ge& |\widetilde U_\eps| + \frac 23 m (2t+t^2) - \frac 23 m
        (t+t^2)\, =\, |\widetilde U_\eps|  + \frac 23 m t\,, 
\end{eqnarray*}
which implies that 
\begin{align}
\left|\Phi_\frac{3(m-|\widetilde U_\eps|)}{2m}(\widetilde U_\eps)\right|\ge m\,.
\end{align}
Noting that
$|\Phi_0(\widetilde U_\eps)|= |\widetilde U_\eps|= |U_\eps\cap
B_{R^n_\eps}(0)|\le m$, we obtain that there exists $t_\eps\ge 0$ such
that $|\Phi_{t_\eps}(\widetilde U_\eps)| = m$ and
\begin{equation}\label{teps}
t_\eps\le \frac{3(m-|\widetilde U_\eps|)}{2m}\le \frac{3\eps}{2m}\,.
\end{equation}

Let now $W_\eps:=\Phi_{t_\eps}(\widetilde U_\eps)$. Recalling
Corollary \ref{cor4missing} and \cite[Proposition 3.1]{hof} (see also
\cite[Proposition 17.1]{maggi}), the following properties hold:
\begin{itemize}
\item[$(i)$] $W_\eps\subset B_{R_\eps'}(0)$ and $W_\eps$ has at most
  $N_\eps $ connected components;
\item[$(ii)$] $|W_\eps|=m$; 
\item[$(iii)$] $P(W_\eps)\le {\rm Lip}(\Phi_{t_\eps})\, P(\widetilde U_\eps)\le 
(1+t_\eps)P(\widetilde U_\eps)\le P(\widetilde U_\eps)+Ct_\eps$;
\item[$(iv)$] $\I_1(W_\eps)\le \I_1(\widetilde U_\eps)+Ct_\eps$;
\item[$(v)$] $\int_{W_\eps} g(x)\le \int_{\widetilde U_\eps} g(x)\,dx + C t_\eps$;
\end{itemize}
where the constant $C>0$ depends only on $g$, $m$ and $E_\lambda(\Om)$.
Indeed, the first two assertions follow by construction. Assertion
$(iii)$ holds true since $\|\varphi\|_{L^\infty(\R)}\le1$. 
Assertion $(iv)$ follows by Corollary \ref{cor4missing}, while $(v)$
holds true since 
\begin{eqnarray*}
  \int_{W_\eps} g(x)\,dx
  &=& \int_{\widetilde U_\eps}
      g(\Phi_{t_\eps}(x)) {\rm det}\,\nabla
      \Phi_{t_\eps}(x)  \,dx
  \\
  &\le& (1+t_\eps)^2 \left( \int_{\widetilde U_\eps} g(x)\,dx +
        C \|\nabla g\|_{L^\infty(B_{2\bar R}(0))} m t_\eps\right) 
  \\ &\le& \int_{\widetilde U_\eps} g(x)\,dx + C'  t_\eps\,,
\end{eqnarray*}
for some $C, C'>0$ depending only on $g$, $m$ and  $E_\lambda(\Om)$.

We claim that $E_\lambda(W_\eps)\le E_\lambda(\Omega)$  for $\varepsilon$ small enough.  
Letting $V_\eps := \Omega\setminus \widetilde U_\eps$, we compute
\begin{eqnarray}\label{ilcalcolazzo}
  \delta_\eps
  &:=& E_\lambda(\Om) -E_\lambda(W_\eps) 
       =
       P(\widetilde U_\eps)+P(V_\eps)+\lambda\I_1(\widetilde
       U_\eps\cup V_\eps)+\int_{\widetilde U_\eps}g\, {dx} 
       +\int_{V_\eps}g\, {dx}
       \notag \\
  && -P(W_\eps)-\lambda\I_1(W_\eps)-\int_{W_\eps}g\, dx
     \notag \\
  &\ge& P(V_\eps)+\lambda\I_1(\widetilde U_\eps\cup
        V_\eps)-\lambda \I_1(\widetilde U_\eps)-C t_\eps\,, 
\end{eqnarray}
where the constant $C>0$ depends only on $g$, $m$, and
$E_\lambda(\Omega)$. We also have $|V_\eps| < \eps$ by
  construction.  Using Lemma \ref{distributionofcharges} with
$U=\widetilde U_\eps$ and $V=V_\eps$, we obtain
  \begin{equation}\label{calc4}
  P(V_\eps)+\lambda\I_1(\widetilde U_\eps\cup V_\eps)-\lambda I_1(\widetilde U_\eps)
  \ge P(V_\eps)\left(1-\frac{\lambda\pi}{{4|\widetilde U_\eps|}}\right)
  \ge P(V_\eps)\left(1-\frac{\lambda\pi}{{4(m-\eps)}}\right).
  \end{equation}
  Recalling that $\lambda<4m/\pi$, we can choose $\eps$ small enough so that
  \begin{align}
  \left(1-\frac{\lambda\pi}{{4(m-\eps)}}\right)\ge \frac 12 \left(1-\frac{\lambda\pi}{{4m}}\right).
  \end{align}
  Recalling also \eqref{teps}, with the help of the isoperimetric
    inequality we then get
  \begin{align}
  \delta_\eps\ge \frac 12 \left(1-\frac{\lambda\pi}{{4m}}\right) P(V_\eps)
  - \frac{3C}{2m}\,|V_\eps|
  \ge \sqrt\pi \left(1-\frac{\lambda\pi}{{4m}}\right) |V_\eps|^\frac 12 - \frac{3C}{2m}\,|V_\eps| \ge 0,
  \end{align}
  provided we choose $\eps$ small enough depending only on $g$, $m$
  and $E_\lambda(\Om)$.  We thus proved that $\delta_\eps\ge 0$, that
  is, $E_\lambda(W_\eps)\le E_\lambda(\Omega)$.

\smallskip

\noindent\emph{Step 2: Construction of a set with a uniformly bounded number of holes.}

\noindent In Step 1 we built a set $W\in\K_m$, with a uniformly
bounded number of connected components and such that
$E_\lambda(W)\le E_\lambda(\Omega)$.  In particular, there exists a
uniform radius $R>0$ such that $W\subset B_R(0)$.  Starting from this,
we construct another set with a uniformly bounded number of holes,
where a hole is a bounded connected component of the complement set.

Let us denote by $\{H_{i}\}_{i\in\mathbb N}$ the connected components
of $W^c$ which are bounded.  As in Step 1, for $\eps\in (0,m/2)$ we
can find $N_\eps$ such that $ \sum_{i> N_\eps}|H_{i}|\le \eps.$ Let us
set $H_\eps:=\bigcup_{i>N_\eps}H_{i}$ and
\begin{align}
  \Omega_\eps:= \sqrt\frac{m}{m+|H_\eps|}\, \left(W\cup
    H_\eps\right)\,\in\,\K_m.
\end{align}
Notice that 
\begin{eqnarray}\label{eqP}
  P(\Om_\eps) &\le& P(W\cup H_\eps) = P(W) - P(H_\eps),
  \\ \label{eqI}
  \I_1(\Om_\eps) &=& \sqrt\frac{m+|H_\eps|}{m}\,\I_1(W\cup H_\eps)\le
                     {\left( 1 +\frac{|H_\eps|}{2m} \right)} \,
                     \I_1(W), 
  \\ \label{eqgg}
  \int_{\Om_\eps} g(x)\,dx &=& \frac{m}{m+|H_\eps|}\int_W g\left(
                               \sqrt\frac{m}{m+|H_\eps|}\,x\right)\,dx \nonumber 
  \\ &\le& \int_{W} g(x)\,dx + C |H_\eps|,
\end{eqnarray}
where the constant $C > 0$ depends on $m,\,g$ and $R$, and in
  obtaining \eqref{eqI}, we used concavity of the square root and
  monotonicity of the capacitary term with respect to filling the
  holes.  Putting together \eqref{eqP}, \eqref{eqI} and \eqref{eqgg},
we then get
\begin{align}
  E_\lambda(\Omega_\eps)\le E_\lambda(W) - P(H_\eps) +
  \left(\frac{E_\lambda(\Omega)}{2m} + C\right) |H_\eps|\,,
\end{align}
which yields the claim for $\eps$ small enough by the isoperimetric
inequality.
\end{proof}

We now prove Theorem \ref{teoexistence}.

\begin{proof}[Proof of Theorem \ref{teoexistence}]
  Let $\Omega_n\in\K_m$ be a minimizing sequence for $E_\lambda$. In
  particular $E_\lambda(\Omega_n)\le c$, for some
  $c = c(\lambda,g,m)>0$ depending only on $g$ and $m$.

Thanks to Proposition \ref{probounded}, we can assume that the sets
$\Omega_n$ are uniformly bounded and the number of connected
components both of $\Omega_n$ and of $(\Omega_n)^c$ is uniformly
bounded.  In particular, the number of connected components of
$\partial{\Omega}_n$ is also uniformly bounded.

Since $\mathcal H^1(\partial{\Omega}_n)\le c$, it follows by Blaschke Theorem (see \cite[Theorem 4.4.15]{AmTi}) that 
$\partial{\Omega}_n\to\Gamma$ in the Hausdorff distance, as $n\to +\infty$ up to passing to a subsequence, 
for some compact set $\Gamma\subset\R^2$ with $\mathcal H^1(\Gamma)<+\infty$.  

Up to passing to a further subsequence, we also have that the sets 
${\Omega}_n$ converge to some compact set  $\Omega$, again in the Hausdorff distance.   
We notice that 
\begin{equation}\label{inside}
\partial\Omega\subset\Gamma.
\end{equation}
Indeed if $x\in\partial\Omega\setminus\Gamma$, then there exists $x_n\in \Omega_n$ such that $x_n\to x$. On the other hand, there exists $N\in\mathbb N$ and $\varepsilon_0>0$ such that $d_H(x_n,\partial\Omega_n)\ge\varepsilon_0$ for $n\ge N$. Otherwise there would exists $y_n\in\partial\Omega_n$ such that $|y_n-x_n|=d(x_n,\partial \Omega_n)\to 0$ and thus 
\begin{align}
|y_n-x|\le |y_n-x_n|+|x_n-x|\to0,
\end{align}
which is impossible, since $x\notin\Gamma$. But then the ball
$B_{\varepsilon_0}(x_n)$ is contained, for $n\ge N$, in $\Omega_n$ and
converges in Hausdorff distance to
$B_{\varepsilon_0}(x)\subset\Omega$. In particular we get
$x\notin\partial\Omega$, which gives a contradiction.  Thanks to Golab
Theorem \cite[Theorem 4.4.17]{AmTi}, we then obtain
\begin{equation}\label{sc}
\mathcal H^1(\partial\Omega)\le \mathcal H^1(\Gamma)\le \liminf_{n\to+\infty} \mathcal H^1(\partial\Omega_n)\le c.
\end{equation}

Let now
$x\in\mathbb R^N\setminus\Gamma$. Then there exist $\varepsilon>0$
and $N\in\mathbb N$ such that for $n\ge N$, we have that
$B_\varepsilon(x)\subset \Omega_n$ or
$B_\varepsilon(x)\subset (\mathbb R^N\setminus\Omega_n)$. Thus
$\chi_{\Omega_n}(x)=1$ or $\chi_{\Omega_n}(x)=0$ for $n$ large enough.
In particular $\chi_{\Omega_n}\to \chi_\Omega$ almost everywhere 
and, by the Dominated Convergence Theorem, we
obtain that $\chi_{\Omega_n}\to \chi_\Omega$ in $L^1(\R^2)$.  

We can now conclude that $\Omega$ is a minimizer in $\K_m$. Indeed,
the minimality is granted by the lower semicontinuity of $E_\lambda$
w.r.t. the $L^1$-convergence, since
$\lambda<\lambda_c(m)$. Moreover, $\Omega\in \K_m$ since it is
compact, it has measure $m$ and $\mathcal H^1(\partial\Omega)<+\infty$
by \eqref{sc}.  The proof is concluded.
\end{proof}

\section{Partial regularity of minimizers: Proof of Theorem \ref{teodensity}}\label{teo3}

In this section we show that all minimizers of our problem are
bounded, contain finitely many connected components and holes, and
satisfy suitable density estimates. For the latter, the argument
essentially follows the classical proof of the density estimates for
quasi-minimizers of the perimeter (see \cite{maggi}). Note, however,
that the standard regularity theory of quasi-minimizers of the
perimeter cannot be applied directly, as the nonlocal term $\I_1$
presents a crititcal perturbation to the perimeter. This additional
complication may be overcome with the help of Lemma
\ref{distributionofcharges} for subcritical values of
$\lambda < \lambda_c(m)$, yielding partial regularity of the
minimizers.

\begin{proof}[Proof of Theorem \ref{teodensity}] Throughout the
    proof, we identify the minimizer $\Omega_\lambda$ with its regular
    representative $\Omega_\lambda^+$, and drop the superscript ``+''
    for ease of notation.

  First of all, the assertion about the number of connected components
  of $\Omega_\lambda$ and $\Omega_\lambda^c$ follows from the argument
  in the proof of Proposition \ref{probounded}, observing that the
  inequality in that proposition becomes strict otherwise,
  contradicting the minimality of $\Omega_\lambda$. Therefore,
    the rest of the proof focuses on the density estimates
  \eqref{eqdens}, whose proof uses the estimates similar to those
  in the proof of Proposition \ref{probounded}. It is enough to show
  the first assertion, since the second one can be proved
  analogously, taking into account that $\Omega_\lambda^+$ is a
    closed set.

  For $r\in (0,\sqrt{m/(2\pi)})$, so that
  $|\Omega_\lambda\setminus B_r(x)|\ge m/2$, we set
\begin{align}
  v(0):=0,\qquad v(r):=|\Omega_\lambda\cap {B_r(x)}|,\qquad
  \Omega_{\lambda,r}:=
  \sqrt\frac{m}{m-v(r)}\,\big(\Omega_\lambda\setminus
  B_r(x)\big)\in\K_m.
\end{align}
Since $x \in \partial \Omega^+_\lambda$, we have that $v(r)>0$ for all
$r>0$.  Moreover, since $\Omega_\lambda$ has finite perimeter, for
almost every $r>0$ there holds (see \cite{maggi})
\begin{align}
P(\Omega_\lambda)=P(\Omega_\lambda;B_r(x))+P(\Omega_\lambda;B_r ^c (x))\qquad \text{ and }
\qquad \frac{dv}{dr}(r)=\mathcal H^1(\partial B_r(x) \cap\Omega_\lambda)\,.
\end{align} 
Recalling the Lipschitz continuity of $g$, for almost every $r\in (0,\sqrt{m/(2\pi)})$ we then get
\begin{equation}\label{form}
\begin{aligned}
  &P(\Omega_\lambda;B_r(x))+P(\Omega_\lambda;B_r ^c
  (x))+\lambda\I_1(\Omega_\lambda)+\int_{\Omega_\lambda}g(y)\, {dy} =
  E_\lambda(\Om_\lambda)
  \\
  &\le E_\lambda(\Om_{\lambda,r}) = \sqrt\frac{m}{m-v(r)}\,
  \left(P(\Omega_\lambda;B_r ^c (x))+\mathcal H^1(\partial B_r(x)
    \cap\Omega_\lambda)\right)  \\
  &+\lambda\sqrt\frac{m-v(r)}m\, \I_1(\Omega_\lambda\setminus
  B_r(x))+\frac{m}{m-v(r)}\,\int_{\Omega_\lambda\setminus
    B_r(x)}g\left(\sqrt\frac{m}{m-v(r)}\,y\right)\, dy
  \\
  &\le P(\Omega_\lambda;B_r ^c (x))+ \frac{dv}{dr}(r) +\lambda
  \I_1(\Omega_\lambda\setminus B_r(x)) + \int_{\Omega_\lambda}g(y)\,
  {dy} + Cv(r),
\end{aligned}
\end{equation}
where the constant $C>0$ depends only on $m$, $\lambda$ and $g$. 

After some simplifications, \eqref{form} reads
\begin{align}
  P(\Omega_\lambda;B_r(x))\le \frac{dv}{dr}(r)+Cv(r)+\lambda \I_1(\Omega_\lambda\setminus B_r(x)) -\lambda \I_1(\Om_\lambda).
\end{align}
Applying Lemma \ref{distributionofcharges} with $U=\Omega_\lambda\setminus  B_r(x)$ and $V=\Omega_\lambda\cap B_r(x) $, 
we then obtain
\begin{eqnarray}\nonumber
  P(\Omega_\lambda; B_r(x))&\le& \frac{dv}{dr}(r)
  +Cv(r)+\frac{\lambda\pi}{4|\Omega_\lambda\setminus  B_r(x)|}P(\Omega_\lambda\cap B_r(x) )
  \\ \label{covid}
  &\le& \frac{dv}{dr}(r)+C'v(r)+\frac{\lambda\pi}{4m}P(\Omega_\lambda\cap B_r(x) ),
\end{eqnarray}
where $C'>0$ depends only on $m$, $\lambda$ and $g$. 

Since for almost every $r>0$ there holds
\begin{align}
P(\Omega_\lambda; B_r(x))+\frac{dv}{dr}(r)=P(\Omega_\lambda\cap B_r(x)),
\end{align}
by adding the quantity $\frac{dv}{dr}(r)$ to both sides of \eqref{covid} we obtain
\begin{align}
P(\Omega_\lambda\cap B_r(x) )\left(1-  \frac{\lambda\pi}{4m}\right)\le 2\, \frac{dv}{dr}(r)+C'v(r).
\end{align}
Thanks to the isoperimetric inequality, for almost every $r\in (0,\sqrt{m/(2\pi)})$
we then get
\begin{align}
2\sqrt\pi\left(1-  \frac{\lambda\pi}{4m}\right) \sqrt{v(r)} \le 2\,\frac{dv}{dr}(r)+C'v(r).
\end{align}
Recalling that $\lambda<4m/\pi$, there exists $r_0\in (0,\sqrt{m/(2\pi)})$, depending only on $m$, $\lambda$ and $g$,
such that 
\begin{align}
2\sqrt\pi\left(1-  \frac{\lambda\pi}{4m}\right) \sqrt{v(r)}-C'v(r)\ge \sqrt\pi\left(1-  \frac{\lambda\pi}{4m}\right) \sqrt{v(r)}
\qquad \text{for all $0<r\le r_0$,}
\end{align}
which gives
\begin{align}
\frac{dv}{dr}(r)\ge \frac{\sqrt\pi}{2}\,\left(1-  \frac{\lambda\pi}{4m}\right) \sqrt{v(r)}\qquad \text{for a.e. $0<r\le r_0$.}
\end{align}
After a direct integration, this inequality implies that 
\begin{align}
  v(r)\ge \frac{\pi}{16}\left(1- \frac{\lambda\pi}{4m} \right)^2 r^2
  \qquad \text{for a.e. $0\le r\le r_0$,}
\end{align}
which gives the first inequality in \eqref{eqdens}.  This
  concludes the proof. 
\end{proof}

\section{Asymptotic shape of minimizers: Proof of Theorem
  \ref{teoasy}}\label{tasy}

\begin{proof}[Proof of Theorem \ref{teoasy}]

  The first assertion is a direct consequence of {Theorem
    \ref{teoexistence}, since $\lambda_k < \lambda_c(m_k)$ for $k$
    large enough}.
  
\smallskip

We now prove the second assertion. Let $\Omega_k$ be a minimizer of
$E_{\lambda_{k}}$ over $\K_{m_k}$.  Recalling Remark \ref{remamb},
without loss of generality we can assume that
\begin{align}
P(\Om_k)=\mathcal H^1(\partial \Om_k)\,.
\end{align}
By a change of variables $x = r_k \tilde x$, with
  $r_k := \sqrt{m_k / \pi}$ we obtain that
\begin{align}
  E_{\lambda_k}(\Omega_k)=r_k F_k(\widetilde \Omega_k),
\end{align}
where $\widetilde\Omega_k:= r_k^{-1} \Omega_k$, so that in
particular $|\widetilde\Omega_k|=\pi$, and
\begin{align}
  F_k(\Omega):=P(\Omega)+\frac{\lambda_k \pi}{m_k}\,\I_1(\Omega)+
  r_k \int_{\Omega} g\left(r_k  \tilde x\right)\,d \tilde x.
\end{align}

Observe that since $g \in \mathcal G$, there exists $x_0 \in \R^2$
  such that $g(x_0) = \min g$, and without loss of generality we may
  assume that $x_0 = 0$.
  By the minimality of $\Omega_k$ we have that
\begin{eqnarray}\label{eqin}
  P(\widetilde{\Omega}_k)+
  \frac{\lambda_k\pi}{m_k}\,\I_1(\widetilde{\Omega}_k)+  
  r_k \int_{\widetilde{\Omega}_k}
  g\left(r_k  \tilde x\right)\,d\tilde x 
  \le
  P(B_1(0))+\frac{\lambda_k\pi}{m_k}\,
  \I_1(B_1(0))   
  +r_k \int_{B_1(0)}
  g\left(r_k  \tilde x\right)\,d\tilde x. 
\end{eqnarray}
Notice also that, since the gradient of $g$ is locally bounded, we have
\begin{align}
  \label{eq:gg0}
  0 \leq \int_{B_1(0)} \left( g\left(r_k 
  \tilde x\right) - g(0) \right) \,d\tilde x \leq C r_k ,
\end{align}
where $C > 0$ depends only on $g$, for all $k$ large enough.
From \eqref{eqin} and \eqref{eq:gg0} we then get
\begin{equation}\label{cirillo}
  P(\widetilde{\Omega}_k)+\frac{\lambda_k\pi}{m_k}\,
  \I_1(\widetilde{\Omega}_k) +
  r_k\int_{\widetilde \Omega_k} \left( 
      g\left(r_k \tilde x\right) - g(0) \right) \,d\tilde x \le 
  P(B_1(0))+\frac{\lambda_k\pi}{m_k}\,\I_1(B_1(0))+ C r_k^2,
\end{equation}
and we note that the integral in the left-hand side is non-negative.

We recall from Lemma \ref{lemtom} the inequality 
\begin{align}
P(B_1(0))+\lambda\I_1(B_1(0))\le \mathcal H^1(\widetilde \Omega_{k})+\lambda \I_1(\widetilde \Omega_{k})=P(\widetilde \Omega_{k})+\lambda \I_1(\widetilde \Omega_{k}),
\end{align}
where the last equality follows from Remark \ref{remamb}, for all
$\lambda\le 4$. Hence we get
\begin{equation}\label{anne}
\I_1(B_1(0))-\I_1(\widetilde \Omega_{k})\le \frac 1\lambda \left(P(\widetilde{\Omega}_{k})-P(B_1(0))\right).
\end{equation}
From \eqref{cirillo} and \eqref{anne} with $\lambda=4$ we then obtain
\begin{align}
    \label{eq:Pgmk}
  \left(1-\frac{\lambda_k\pi}{4m_k}\right)
  \left(P(\widetilde{\Omega}_{k})-P(B_1(0))\right) +
  r_k \int_{\widetilde \Omega_k} \left(
  g\left(r_k \tilde x\right) - g(0) \right) \,d\tilde x
  \leq C r_k^2\,.
\end{align}

By the isoperimetric inequality in quantitative form \cite{fumapa},
there exist $\tilde x_k\in \R^2$ such that
\begin{align}
  \label{eq:isodef}
  |\widetilde{\Omega}_{k}\Delta B_1(\tilde x_k)|^2\le C
  m_k\left(1-\frac{\lambda_k\pi}{4m_k}\right)^{-1},
\end{align} 
for all $k$ small  enough,
for some constant $C > 0$ depending only on $g$. Hence, recalling the assumption on $\lambda_k,\,m_k$, we
obtain
\begin{align}
  \lim_{k\to +\infty}|\widetilde{\Omega}_{k}\Delta B_1(\tilde x_{k})| =0,
\end{align}
implying that $\widetilde \Om_k$ converge to $B_1(0)$ is the
$L^1$-sense. Hausdorff convergence then follows from the fact that the
density estimates in Theorem \ref{teodensity} can be easily seen to
hold for $\widetilde \Om_k$ uniformly in $k$.

Similarly, from \eqref{eq:Pgmk} written in the original unscaled
  variables, and with the help of the isoperimetric inequality we
  infer that
  \begin{align}
    \label{eq:mchik} 
    {1 \over m_k} \int_{\R^2} \chi_{\Omega_k}(x) \bar g(x) \, dx - g(0)
    \leq C m_k^{1/2},   
  \end{align}
  where $\bar g(x) = \min \{g(x), g(0) + 1\}$ and $\chi_{\Omega_k}$
  are the characteristic functions of $\Omega_k$. At the same time,
  defining $x_k := r_k \tilde x_k$ and using \eqref{eq:isodef} we have
  \begin{align}
    \left| \int_{\R^2} (\chi_{\Omega_k} - \chi_{B_{r_k}(x_k)} )
      \bar  g \, dx \right| \leq (g(0) + 1) | \Omega_k \Delta
    B_{r_k}(x_k) | \leq C m_k^{3/2},
  \end{align}
  for $C > 0$ depending only on $g$ and all $k$ large enough. Thus, we
  have that \eqref{eq:mchik} also holds with $\chi_{\Omega_k}$
  replaced with $\chi_{B_{r_k}(x_k)}$, and by Lipschitz continuity of
  $\bar g$ we obtain
  \begin{align}
    \label{eq:gxk}
    \bar g(x_k) = {1 \over m_k} \int_{\R^2} \chi_{B_{r_k}(x_k)}(x)
    \bar g(x_k) \, dx
    \leq  {1 \over m_k} \int_{\R^2} \chi_{B_{r_k}(x_k)}(x) \bar g(x) \, dx
    + C m_k^{1/2} \notag \\
    \leq {1 \over m_k} \int_{\R^2} \chi_{\Omega_k}(x) \bar g(x) \, dx
    + C' m_k^{1/2} \leq g(0) + C'' m_k^{1/2},
  \end{align}
  for some $C, C', C'' > 0$ depending only on $g$ and all $k$ large
  enough. In particular, $\bar g(x_k) = g(x_k)$ for all $k$
  sufficiently large, and by coercivity of $g$ the sequence of $x_k$
  is bounded. Thus, it is the desried sequence.

  Finally, to prove the third assertion of the theorem, we pass to the
  limit $k \to \infty$ in \eqref{eq:gxk}, after extracting a
  convergent subsequence, and use continuity of $g$.
\end{proof}

\section{The Euler--Lagrange equation: Proof of Theorem \ref{teocinque}}\label{sec:shapederivative}

The aim of this section is to obtain the Euler--Lagrange equation
satisfied by regular critical points of the functional $E_\lambda$. In
order to do this, we first compute the first variation of an auxiliary
functional which will be shown to be related to the capacitary energy.
	
Given an open set $\Omega\subset\R^2$, not necessarily bounded, 
    and a function
$f\in L^\frac 43({\R^2})$, we define
\begin{equation}\label{minJf}
 I_{\Om,f}(v) :=
 \begin{cases}
   \frac12\|v\|^2_{\mathring{H}^{\frac12}(\R^2)} -\int_{\R^2}fv\,dx &
   \text{if} \ v\in \mathring{H}^\frac12(\R^2) \ \text{and} \
   v|_{\Omega^c}=0, \\
   +\infty & \text{otherwise}.
 \end{cases}
\end{equation}
Notice that since the space $\mathring{H}^{\frac12}(\R^2)$
continuously embeds into $L^4(\R^2)$,
the functional $I_{\Om,f}$ admits a unique minimizer
$u_{\Omega,f}\in \mathring{H}^{\frac12}(\R^2)$, which satisfies
\begin{equation}\label{boiade}
\begin{cases}
  (-\Delta)^\frac12 u_{\Om,f}=f\qquad&\text{on }\Omega, \\
  u_{\Om,f}=0&\text{on }\Omega^c,
\end{cases}
\end{equation}
in the distributional sense, namely (see \cite[Eq. (4.14)]{LMM}):
  \begin{align}
    \int_\Omega u_{\Omega,f}  (-\Delta)^\frac12 \varphi \, dx =
    \int_\Omega f \varphi 
    \, dx \qquad \forall \varphi \in C^\infty_c(\Omega),
  \end{align}
  where
  \begin{align}
    \label{L12}
    (-\Delta)^\frac12  \varphi(x) := {1 \over 4 \pi} \int_{\R^2} {2
    \varphi(x) - \varphi(x - y) - 
    \varphi (x + y) \over |y|^3} \, dy \qquad x \in \R^2,
  \end{align}
  with the usual convention of extending $\varphi$ by zero outside
  $\Om$.  Furthermore, when
  $u_{\Om,f}|_\Om \in C^{1,\alpha}_\mathrm{loc}(\Omega) \cap
  L^\infty(\Om)$ for some $\alpha \in (0,1)$, we also have that
  \eqref{boiade} holds pointwise in $\Om$, with the definition of
  $(-\Delta)^\frac12$ in \eqref{L12} extended to such functions
  \cite[Section 3]{ro}. In addition, in this case we have
\begin{align}
  \label{Jfuf}
  J_f(\Omega):=\min I_{\Om,f} = -\frac 12 \int_\Omega u_{\Omega,f}
  f\,dx = -\frac 12 
  \int_\Omega u_{\Omega,f}\,  (-\Delta)^\frac12  u_{\Om,f}\,dx\,. 
\end{align}

 The following lemma gives a basic regularity result for the
  Dirichlet problem in \eqref{boiade}.

\begin{lemma}\label{uba}
  Let $f\in L^\infty(\R^2)\cap L^\frac 43(\R^2)$, let
  $\Omega\subset\R^2$ be an open set
  and let $u_{\Omega,f}$ be the minimizer of $I_{\Om,f}$.
  Then there exists a constant $C >0$ depending only on $f$ such that
  $\|u_{\Omega,f}\|_{L^\infty(\R^2)}\le C$.\\ If in
    addition $f|_\Om \in C^\alpha_\mathrm{loc}(\Om)$ for some
    $\alpha \in (0,1)$ then
    $u_{\Om,f} |_\Om \in C^{1,\alpha}_\mathrm{loc}(\Om)$.
\end{lemma}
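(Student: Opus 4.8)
The plan is to prove the two claims separately: the global $L^\infty$-bound by a De Giorgi--Stampacchia truncation argument based only on the minimality of $u:=u_{\Om,f}$ together with the continuous embedding $\mathring H^{1/2}(\R^2)\hookrightarrow L^4(\R^2)$ (whose constant I denote by $S$) and the two integrability hypotheses on $f$; and the local $C^{1,\alpha}$-regularity by invoking the interior Schauder theory for the half-Laplacian from \cite{ro,ro-s,DS}, the only extra input being the $L^\infty$-bound just obtained, which is precisely the global boundedness hypothesis those results require.

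For the first claim I would start from the Euler--Lagrange identity
\[\frac{1}{4\pi}\int_{\R^2}\int_{\R^2}\frac{(u(x)-u(y))(v(x)-v(y))}{|x-y|^3}\,dx\,dy=\int_{\R^2}fv\,dx,\]
valid for every $v\in\mathring H^{1/2}(\R^2)$ with $v=0$ a.e.\ on $\Om^c$, obtained by differentiating $t\mapsto I_{\Om,f}(u+tv)$ at $t=0$. Since $u$ minimizes $I_{\Om,f}$ we have $I_{\Om,f}(u)\le I_{\Om,f}(0)=0$, which with Hölder's inequality and Sobolev gives $\|u\|_{\mathring H^{1/2}(\R^2)}\le 2S\|f\|_{L^{4/3}(\R^2)}$, hence $u\in L^4(\R^2)$ with a norm controlled by $f$. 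For $k\ge 0$ I would then test the identity with $v=(u-k)_+$ (admissible, since $u=0$ on $\Om^c$ and truncation does not increase the Gagliardo seminorm); inserting the pointwise inequality $(a-b)\big((a-k)_+-(b-k)_+\big)\ge\big((a-k)_+-(b-k)_+\big)^2$ into the double integral yields $\|(u-k)_+\|_{\mathring H^{1/2}(\R^2)}^2\le\int_{\{u>k\}}f\,(u-k)_+\,dx$, and then Hölder (with exponents $4/3$ and $4$), Sobolev and $|f|\le\|f\|_{L^\infty(\R^2)}$ give
\begin{equation*}
\|(u-k)_+\|_{L^4(\R^2)}\le S^2\|f\|_{L^\infty(\R^2)}\,\big|\{u>k\}\big|^{3/4}.
\end{equation*}
Since $(u-k)_+>h-k$ on $\{u>h\}$ for $h>k$, this is exactly the super-level-set inequality $\mu(h)\le C_f\,(h-k)^{-4}\mu(k)^{3}$ with $\mu(k):=|\{u>k\}|$; the classical Stampacchia iteration lemma (the exponent $3>1$ being superlinear) then gives $\mu(1+d)=0$ for an explicit $d=d(f)$, starting the iteration at level $k_0=1$, where $\mu(1)\le\|u\|_{L^4(\R^2)}^4<\infty$ is controlled by $f$. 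Applying the same argument to $-u$ (which solves the problem with data $-f$) yields the matching lower bound, so $\|u\|_{L^\infty(\R^2)}\le C$ with $C$ depending only on $\|f\|_{L^{4/3}(\R^2)}$, $\|f\|_{L^\infty(\R^2)}$ and $S$.

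For the second claim I would fix a ball with $\overline{B_{2\rho}(x_0)}\subset\Om$. By the first part $u\in L^\infty(\R^2)$, by hypothesis $f\in C^\alpha(\overline{B_{2\rho}(x_0)})$ with $\alpha\in(0,1)$, and $u$ is a bounded distributional solution of $(-\Delta)^{1/2}u=f$ in $B_{2\rho}(x_0)$. After rescaling $B_{2\rho}(x_0)$ to the unit ball, the interior Schauder estimates for the fractional Laplacian \cite{ro,ro-s,DS} apply and give $u\in C^{1+\alpha}(B_\rho(x_0))$, the exponent $1+\alpha=2\cdot\tfrac12+\alpha$ being admissible precisely because it is non-integer for $\alpha\in(0,1)$. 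A covering of $\Om$ by such balls then yields $u|_\Om\in C^{1,\alpha}_{\mathrm{loc}}(\Om)$.

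I expect the genuine obstacle to be a bookkeeping one in the first claim: because $\Om$ may be unbounded, the set $\{u>0\}$ can have infinite measure, so the Stampacchia iteration cannot be started at level $0$; this is handled by starting at a positive level and using that $u\in L^4(\R^2)$, with an $f$-dependent norm, makes $\mu(k_0)$ finite for every $k_0>0$. Beyond this, the first claim is routine provided one keeps track that every constant depends on $f$ alone, and the second claim is entirely a matter of correctly quoting the nonlocal Schauder theory, whose hypotheses are met thanks to the $L^\infty$-bound.
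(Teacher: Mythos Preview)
Your argument is correct, but for the $L^\infty$-bound it proceeds along a genuinely different route from the paper. The paper introduces the global potential $\varphi\in\mathring H^{1/2}(\R^2)$ solving $(-\Delta)^{1/2}\varphi=-f$ on all of $\R^2$, which is bounded by \cite[Lemma~4.1]{LMM} since $f\in L^{4/3}\cap L^\infty$; then $w:=u_{\Omega,f}+\varphi$ minimizes the pure Gagliardo energy among functions equal to $\varphi$ on $\Omega^c$, and a single truncation at the level $\pm\|\varphi\|_{L^\infty}$ (which does not increase the seminorm) gives $\|w\|_{L^\infty}\le\|\varphi\|_{L^\infty}$, hence $\|u_{\Omega,f}\|_{L^\infty}\le 2\|\varphi\|_{L^\infty}$. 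This is a maximum-principle argument in disguise, short and requiring no iteration, but it outsources the analytic work to the cited lemma on the global Riesz potential. Your De~Giorgi--Stampacchia iteration is more self-contained: it uses only the variational inequality, the pointwise truncation identity, Sobolev embedding, and the standard level-set lemma, and your observation that one must start the iteration at a strictly positive level because $\Omega$ may be unbounded is exactly the right fix. Both approaches yield a constant depending only on $\|f\|_{L^{4/3}}$ and $\|f\|_{L^\infty}$; the paper's is quicker given the external reference, yours is more robust and transparent. For the interior $C^{1,\alpha}$-regularity the two proofs coincide: both simply invoke the nonlocal Schauder estimates of \cite{ro,ro-s} once boundedness of $u_{\Omega,f}$ is in hand.
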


\begin{proof}
  Let $\varphi\in \mathring{H}^\frac12(\R^2)$ be the unique solution
  to $(-\Delta)^\frac 12\varphi= -f$ in $\R^2$ (for a detailed
  discussion of the notion and the representations of the solution,
  see \cite[Section 4]{LMM}).  In particular, since by assumption
  $f \in L^{\frac43}(\R^2) \cap L^\infty(\R^2)$, from \cite[Lemma
  4.1]{LMM} it follows that $\varphi\in L^\infty(\R^2)$.  Furthermore,
  since we have
\begin{align}
  \frac 12\, \|v\|_{\mathring{H}^\frac12(\R^2)}^2 -\int_{\R^2} v f\,dx
  = \frac 12\, \|v+\varphi\|_{\mathring{H}^\frac12(\R^2)}^2 - \frac
  12\, \|\varphi\|_{\mathring{H}^\frac12(\R^2)}^2 \qquad \text{for any
    $v\in \mathring{H}^\frac12(\R^2)$,}
\end{align} 
we get that the function $w_{\Omega,f} := u_{\Omega,f}+\varphi$ 
solves the minimum problem
\begin{align}
  w_{\Om,f} = \mathrm{argmin} \left\{
    \|w\|^2_{\mathring{H}^{\frac12}(\R^2)} \ :\ w \in
    \mathring{H}^\frac12(\R^2),\, w|_{\Omega^c}=\varphi \right\}.
\end{align}

By an explicit computation, for any
  $w \in \mathring{H}^{\frac12}(\R^2)$ we have
  $\| \bar w \|_{\mathring{H}^{\frac12}(\R^2)} \leq \| w
  \|_{\mathring{H}^{\frac12}(\R^2)}$, where
  \begin{align}
    \bar w := \min( \max( w, -\| \varphi \|_{L^\infty(\R^2)}), \|
    \varphi \|_{L^\infty(\R^2)}).
  \end{align}
It then follows that
$ \|w_{\Omega,f}\|_{L^\infty(\R^2)}\le
\|\varphi\|_{L^\infty(\R^2)}$, yielding
\begin{align}
  \|u_{\Omega,f}\|_{L^\infty(\R^2)} \le
  \|w_{\Omega,f}\|_{L^\infty(\R^2)} + \|\varphi\|_{L^\infty(\R^2)}\le
  2\, \|\varphi\|_{L^\infty(\R^2)}.
\end{align}

Finally, H\"older regularity of the derivative of $u$ is an
  immediate consequence of \cite[Eq. (6.2)]{ro} (see also the references
  therein).
\end{proof}

We now recall the definition of the normal $\frac12$-derivative of the
function $u_{\Omega,f}$ vanishing at points of $\partial\Omega$:
\begin{equation}
  \partial^{1/2}_{\nu}u_{\Omega,f}(x):=\lim_{s\to0^+}\frac{u_{\Omega,f}(x-s\nu(x))
  }{s^{1/2}} \qquad \qquad x \in \partial \Omega,  
    \end{equation}
    where $\nu(x)$ is the outward unit normal vector.  We have the
    following result that will be crucial for the computation of
      the shape derivative of $J_f(\Om)$.

\begin{lemma}\label{uc}
  Let $\Omega_n\,,\Omega_\infty \subset \R^2$, $n\in\mathbb N$, be
  open sets whose boundaries are uniformly bounded and uniformly of
  class $C^{1,\alpha}$ for some $\alpha\in (0,1/2)$. Let
  $f\in L^\infty(\R^2)\cap L^\frac 43(\R^2)$ and assume that
  $\Omega_n\to\Omega_\infty$, as $n\to\infty$, in the Hausdorff
  distance.
  Then, for all $n\in\mathbb N\cup\{\infty\}$ the function
  $\partial^{1/2}_\nu u_{\Omega_n,f}$ can be continuously extended to
  a function $\overline D_n\in C^\alpha(\R^2)$ such that
  $\overline D_n\to \overline D_\infty$ as $n\to\infty$, locally
  uniformly in $\R^2$.
\end{lemma}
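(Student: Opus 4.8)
The plan is to establish, first, the pointwise existence and $C^\alpha$-regularity of the normal $\tfrac12$-derivative for each fixed domain, and then to upgrade this to the locally uniform convergence statement by a compactness argument. First I would invoke Lemma \ref{uba}: since $f \in L^\infty(\R^2) \cap L^{4/3}(\R^2)$, the minimizer $u_{\Omega_n,f}$ is bounded on $\R^2$ uniformly in $n$, and since $f$ restricted to the open set is (locally) $C^\alpha$, we have $u_{\Omega_n,f}|_{\Omega_n} \in C^{1,\alpha}_{\mathrm{loc}}(\Omega_n)$. The key regularity input is the boundary regularity for the homogeneous Dirichlet problem for $(-\Delta)^{1/2}$ on a $C^{1,\alpha}$ domain, as in \cite{ro,ro-s}: the solution behaves like $d(x)^{1/2}$ near $\partial\Omega_n$, where $d$ is the distance to the boundary, and more precisely $u_{\Omega_n,f}(x)/d(x)^{1/2}$ extends to a function in $C^\alpha(\overline{\Omega_n})$. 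Since the boundaries $\partial\Omega_n$ are uniformly bounded and uniformly of class $C^{1,\alpha}$, and $\|f\|_{L^\infty} + \|f\|_{L^{4/3}}$ controls the relevant norms, these estimates are \emph{uniform} in $n$: there is a single constant $M$ with $\|u_{\Omega_n,f}/d_n^{1/2}\|_{C^\alpha(\overline{\Omega_n})} \le M$ for all $n$. In particular, for $x \in \partial\Omega_n$ the limit defining $\partial^{1/2}_\nu u_{\Omega_n,f}(x)$ exists, equals the boundary trace of this quotient, and the resulting function on $\partial\Omega_n$ is $C^\alpha$ with norm $\le M$. A standard $C^\alpha$-extension (e.g.\ via a uniform $C^{1,\alpha}$ tubular neighborhood of $\partial\Omega_n$, or a Whitney-type extension) then produces $\overline D_n \in C^\alpha(\R^2)$, supported in a fixed large ball, with $\|\overline D_n\|_{C^\alpha(\R^2)} \le C M$ uniformly in $n \in \mathbb N \cup \{\infty\}$.

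Next I would establish the convergence $\overline D_n \to \overline D_\infty$. The family $\{\overline D_n\}$ is equibounded in $C^\alpha(\R^2)$ and supported in a fixed ball, hence precompact in $C^{\alpha'}_{\mathrm{loc}}(\R^2)$ for any $\alpha' < \alpha$ by Arzel\`a--Ascoli; so it suffices to identify every subsequential limit with $\overline D_\infty$. For this I would use stability of the Dirichlet problem under Hausdorff convergence of uniformly $C^{1,\alpha}$ domains: the functions $u_{\Omega_n,f}$ converge, say in $\mathring H^{1/2}(\R^2)$ and locally uniformly, to $u_{\Omega_\infty,f}$ — this follows because the capacitary/energy framework is stable (one can use test functions transported by the $C^{1,\alpha}$-close diffeomorphisms between the domains, together with uniqueness of the minimizer of $I_{\Omega_\infty,f}$). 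The uniform interior and boundary Schauder-type estimates then give convergence of the quotients $u_{\Omega_n,f}/d_n^{1/2}$, hence of their boundary traces and extensions, to $u_{\Omega_\infty,f}/d_\infty^{1/2}$ and its extension; this pins down the limit and yields $\overline D_n \to \overline D_\infty$ locally uniformly.

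The main obstacle I anticipate is making the boundary regularity \emph{uniform} across the family $\{\Omega_n\}$ and handling the fact that the domains $\Omega_n$ differ — the estimates in \cite{ro,ro-s} are stated for a fixed domain, so one needs to either flatten the boundary by a controlled change of variables (using that all the $\partial\Omega_n$ admit $C^{1,\alpha}$ parametrizations with uniformly bounded norms and a uniform lower bound on the interior/exterior ball radii) and track the dependence of constants, or quote a version of these estimates with explicit constant dependence. A secondary technical point is the stability of $u_{\Omega_n,f}$ under Hausdorff convergence at the level of the $d^{1/2}$-weighted quotient rather than merely in energy; this requires combining the energy convergence with the uniform boundary estimates and a diagonal/compactness argument, but no genuinely new idea. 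Once the uniform estimates are in place, identifying the limit is routine from uniqueness of the minimizer in \eqref{minJf}.
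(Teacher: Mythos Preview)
Your proposal is correct and follows essentially the same route as the paper: uniform $L^\infty$ bounds from Lemma~\ref{uba}, the boundary regularity of \cite{ro-s} giving uniform $C^\alpha$ control of $u_{\Omega_n,f}/d_n^{1/2}$, a $C^\alpha$ extension, Arzel\`a--Ascoli compactness, and identification of the limit via uniqueness of the minimizer of $I_{\Omega_\infty,f}$. The only minor differences are that the paper (i) localizes by intersecting with a large ball $\widetilde\Omega_n=\Omega_n\cap B_{R_0+R_1}(0)$ to apply \cite{ro-s} (since $\Omega_n$ may be unbounded), and (ii) handles the stability $u_n\to u_\infty$ by a short $\Gamma$-convergence argument for $I_{\Omega_n,f}$ rather than by transporting test functions through diffeomorphisms; both yield the same conclusion.
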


	\begin{proof}
          Denote $u_n := u_{\Omega_n,f}$ for simplicity.  Let
          $R_1>2R_0>0$ be such that
          $\partial \Omega_n\subset B_{R_0/2}(0)$ and
          $B_{R_0}(0)\subset B_{R_1/2}(x_0)$ for all
          $n\in\mathbb N\cup\{\infty\}$ and
          $x_0\in\partial\Omega_n$. Let also
          $\widetilde\Omega_n := \Omega_n\cap B_{R_0+R_1}(0)$.
          
          Notice that from Lemma \ref{uba} it follows that
          $\|u_n\|_{L^\infty(\R^2)}\le C$ for some constant $C>0$
          independent of $n$.  Then by \cite[Proposition 1.1]{ro-s},
          applied with $\Omega$ replaced by $\widetilde\Omega_n$ and
          $B_1(0)$ replaced by $B_{R_1}(x_0)$ for some
          $x_0\in\partial\Omega_n$, the sequence $(u_n)$ is uniformly
          bounded in $C^{1/2}({B_{R_0}(0)})$.  We observe that the
          $C^{1/2}$-estimate in \cite{ro-s} is uniform in $n$ since
          the involved constants depend only on the
          $C^{1,\alpha}$-norm of the boundary of
          $\partial \widetilde\Omega_n$.  As a consequence, by
          Arzel\`a-Ascoli Theorem, up to passing to a subsequence, the
          functions $u_n$ converge as $n\to\infty$ to $u^*$
          uniformly in $\overline B_{R_0}(0)$.

          To identify the limit function $u^*$, we establish the
          $\Gamma$-convergence of the functional $I_{\Om_n,f}$ to
          $I_{\Om_\infty,f}$ with respect to the weak convergence in
          $\mathring{H}^{\frac12}(\R^2)$. The latter is the natural
          topology, since the minimizers of $I_{\Om_n,f}$ are
          uniformly bounded in $\mathring{H}^{\frac12}(\R^2)$
          independently of $n$. 
          Indeed, by H\"older inequality we have 
          \begin{align}
            0 = I_{\Om_n,f} (0) \geq \frac12 \| u_n
            \|^2_{\mathring{H}^{\frac12}(\R^2)} - \| f
            \|_{L^{\frac43}(\R^2)} \| u_n \|_{L^4(\R^2)},
          \end{align}
          and the last term is dominated by the first term in the
          right-hand side by fractional Sobolev inequality
          \cite[Theorem 6.5]{DPV}. The $\Gamma-\liminf$ follows from
          lower-semicontinuity of the
          $\mathring{H}^{\frac12}(\R^2)$-norm and the continuity of
          the linear term, together with the fact that the limit
          function vanishes a.~e. in $\Omega_\infty^c$ by compact
          embedding of $\mathring{H}^{\frac12}(\R^2)$ into
          $L^p_\mathrm{loc}(\R^2)$ for any $p < 4$ \cite[Corollary
          7.2]{DPV}. Finally, the $\Gamma-\limsup$ follows by
          approximating the limit function by a function from
          $C^\infty_c(\Omega_\infty)$, for which we have pointwise
          convergence of $I_{\Om_n,f}$, and a diagonal argument. As a
          corollary to this result, we have that
          $u_n \rightharpoonup u_\infty$ in
          $\mathring{H}^{\frac12}(\R^2)$ and, hence, by uniqueness of
          the minimizer of $I_{\Om_\infty,f}$, we also have
            $u_n \to u_\infty$ a. e.  in $\R^2$. In particular,
          $u^* = u_\infty$ a. e. in $B_{R_0}(0)$.
          
          We now consider the functions $D_n:\widetilde\Omega_n\to\R$,
          $D_n(x) := u_n(x)/d_n^{1/2}(x)$, where
          $d_n(x) := \mathrm{dist}(x, \widetilde\Omega_n^c)$ and
          $n\in\mathbb N\cup\{\infty\}$.  Then by \cite[Theorem
          1.2]{ro-s} (see also \cite{DS}), applied as before with
          $\Omega$ replaced by $\widetilde\Omega_n$ and $B_1(0)$
          replaced by $B_{R_1}(x_0)$ for some
          $x_0\in\partial\Omega_n$, the sequence $(D_n)$ is uniformly
          bounded in $C^{\alpha}(\overline B_{R_0}(0))$.
          By classical extension theorems (see for instance
          \cite[Theorem 6.38]{gt}) for all $n\in\mathbb N$ we can
          extend $D_n$ to a function
          $\overline D_n:\, \overline B_{R_0}(0)\to\R$ such that
		\begin{align}
                  \|\overline D_n\|_{C^{\alpha}(\overline
                  B_{R_0}(0))}\le
                  C_0\|D_n\|_{C^\alpha(\overline B_{R_0}(0))}\le C, 
		\end{align}
		where the constants $C_0,\,C$ are independent of $n$.
                Again by Arzel\`a-Ascoli Theorem, up to passing to a
                subsequence, the functions $\overline D_n$ converge as
                $n\to\infty$ to a function
                $\overline D^* \in C^{\alpha}(\overline
                  B_{R_0}(0))$ uniformly. Moreover, from the
                convergence of $u_n$ to $u_\infty$ we get that
                $\overline D^*|_{\widetilde\Omega_\infty \cap
                  \overline B_{R_0}(0)}=D_\infty$.
		
          Finally, we observe that $\overline D_n$ is a
          continuous extension of $\partial_\nu^{1/2} u_n$ for all
          $n\in\mathbb N\cup\{\infty\}$, since we have, for any
          $x\in\partial\Omega_n$,
		\begin{align}
		\begin{aligned}
		\overline D_n(x)= \lim_{s\to 0^+}D_n(x-s\nu(x))
		= \lim_{s\to 0^+}\frac{u_n(x-s\nu(x))}{d_n(x-s\nu_{\Omega_n}(x))^{1/2}}
		=\partial_\nu^{1/2} u_n(x),
		\end{aligned}
		\end{align}
                concluding the proof.
	\end{proof}
	
	\begin{corollary}\label{1luglio}
          Under the assumptions of Lemma \ref{uc}, let
          $x_n\in\partial\Omega_n$ and $x\in\partial\Omega_\infty$ be
          such that $x_n\to x\in\partial \Omega_\infty$. Then
          $\partial^{1/2}_\nu u_n(x_n)\to \partial^{1/2}_\nu
          u_\infty(x)$ as $n\to+\infty$.
	\end{corollary}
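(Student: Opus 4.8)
The plan is to read off the statement directly from the two conclusions of Lemma~\ref{uc}. Recall from that lemma, and from the notation fixed in its proof, that there is a radius $R_0>0$ with $\partial\Omega_n\subset B_{R_0/2}(0)$ for every $n\in\mathbb N\cup\{\infty\}$, that $\partial^{1/2}_\nu u_n$ extends to a function $\overline D_n\in C^\alpha(\overline B_{R_0}(0))$ (indeed to $C^\alpha(\R^2)$) with $\overline D_n(y)=\partial^{1/2}_\nu u_n(y)$ for all $y\in\partial\Omega_n$, and that $\overline D_n\to \overline D_\infty$ locally uniformly as $n\to\infty$. Since the points $x_n$ and $x$ all lie in $B_{R_0/2}(0)$, the relevant quantities are simply values of these extended functions: $\partial^{1/2}_\nu u_n(x_n)=\overline D_n(x_n)$ and $\partial^{1/2}_\nu u_\infty(x)=\overline D_\infty(x)$.

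First I would split the difference by the triangle inequality,
\begin{equation*}
  \left| \partial^{1/2}_\nu u_n(x_n) - \partial^{1/2}_\nu u_\infty(x) \right|
  \le \left| \overline D_n(x_n) - \overline D_\infty(x_n) \right|
  + \left| \overline D_\infty(x_n) - \overline D_\infty(x) \right| .
\end{equation*}
The first term is bounded by $\sup_{\overline B_{R_0/2}(0)}|\overline D_n - \overline D_\infty|$, which tends to $0$ by the local uniform convergence in Lemma~\ref{uc} (the closed ball $\overline B_{R_0/2}(0)$ being compact). The second term is at most $\|\overline D_\infty\|_{C^\alpha(\overline B_{R_0}(0))}\,|x_n-x|^\alpha$, which tends to $0$ because $x_n\to x$. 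Combining the two estimates yields $\partial^{1/2}_\nu u_n(x_n)\to\partial^{1/2}_\nu u_\infty(x)$, as claimed.

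There is essentially no obstacle here: the corollary is a soft consequence of uniform convergence together with the equicontinuity (the uniform $C^\alpha$ bound) already established in Lemma~\ref{uc}. The only point deserving a remark is that in the proof of Lemma~\ref{uc} the convergence $\overline D_n\to\overline D_\infty$ is extracted along a subsequence via the Arzel\`a-Ascoli theorem; but the limit is uniquely identified there, since its trace on $\partial\Omega_\infty$ is intrinsically $\partial^{1/2}_\nu u_\infty$ and $u_\infty$ is the unique minimizer of $I_{\Omega_\infty,f}$. Hence the standard subsequence-of-a-subsequence argument promotes the convergence, and with it the conclusion above, to the full sequence.
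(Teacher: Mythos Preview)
Your proof is correct and essentially the same as the paper's: both use the extensions $\overline D_n$ from Lemma~\ref{uc} and a triangle-inequality split, the only cosmetic difference being that the paper pivots through $\overline D_n(x)$ (using the uniform $C^\alpha$ bound on $\overline D_n$ for the first term and pointwise convergence at $x$ for the second), whereas you pivot through $\overline D_\infty(x_n)$. Your closing remark about upgrading the subsequential convergence to the full sequence is a fair observation about Lemma~\ref{uc} itself, but is not needed for the corollary once one takes the lemma's statement at face value.
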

	
	\begin{proof}
          Consider the extensions $\overline D_n$,
          $n\in\mathbb N\cup\{\infty\}$, constructed in the proof of
          the previous lemma. Then we have
		\begin{align}
		\begin{aligned}
                  |\partial^{1/2}_\nu u_n(x_n)-\partial^{1/2}_\nu
                  u_\infty(x)|&= |\overline D_n(x_n)-\overline
                  D_\infty(x)|\\ 
                  &\le |\overline D_n(x_n)-\overline
                  D_n(x)|+|\overline D_n(x)-\overline D_\infty(x)|
		\end{aligned}
		\end{align}
		and the right-hand side of the latter inequality
                converges to $0$ as $n\to+\infty$.
	\end{proof}

        We now compute the first variation of the functional $J_f$.
        We note that for bounded domains and under stronger regularity
        assumptions such a computation was carried out in
        \cite{DGV13}, with a relatively long and technical proof. Here
        we provide an alternative, shorter proof, that also covers the
        case of unbounded domains and weaker assumptions on the
        regularity of $f$ and $\partial \Om$.

\begin{theorem}\label{dgv}
  Let $f\in L^\infty(\R^2)\cap L^\frac 43(\R^2)$ be such that
  $f|_\Om \in C^\alpha_\mathrm{loc}(\Om)$ for some $\alpha \in
  (0,1)$. Let $\Omega$ be an open set with compact boundary of class
  $C^2$, and let $u_{\Omega,f}$ be the unique minimizer of
  $I_{\Omega,f}$.  Let $\zeta \in C^\infty(\R^2, \R^2)$, and let
  $(\Phi_t)_{t\in \R}$ be a smooth family of diffeomorphisms of the
  plane satisfying $\Phi_0 = \mathrm{Id}$ and
  $\left. {d \over dt} \Phi_t \right|_{t = 0} = \zeta$.  Then, if
  $\nu$ is the outward pointing normal vector to $\partial\Omega$, the
  normal $\frac12$-derivative $\partial^{1/2}_\nu u_{\Om,f}$ is
  well-defined and belongs to $C^\beta(\partial\Omega)$ for any
  $\beta\in (0,1/2)$.  Moreover, we have
\begin{equation}
  \left. \frac{d}{dt}J_f(\Phi_t(\Omega))\right|_{t=0}=-\frac\pi 8\,
  \int_{\partial\Omega}(\partial^{1/2}_\nu
  u_{\Omega,f}(x))^2\zeta(x)\cdot \nu(x)\,d\mathcal H^{1}(x).
\end{equation}
\end{theorem}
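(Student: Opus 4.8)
\emph{Plan.} The idea behind the proof of Theorem \ref{dgv} is to freeze the moving domain by pulling back along $\Phi_t$, thereby turning $t\mapsto J_f(\Phi_t(\Omega))$ into the value of a quadratic minimization problem posed on a \emph{fixed} function space with $t$-dependent coefficients, to differentiate that value by a Danskin-type argument, and finally to identify the resulting bulk quantity with the claimed boundary integral through a fractional Rellich--Ne\v{c}as (Pohozaev-type) identity.

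\emph{Step 1: regularity of $\partial^{1/2}_\nu u_{\Omega,f}$.} Since $\partial\Omega\in C^2\subset C^{1,\beta}$ for every $\beta\in(0,1/2)$, and $f\in L^\infty(\R^2)\cap L^\frac43(\R^2)$ with $f|_\Omega\in C^\alpha_{\mathrm{loc}}(\Omega)$, Lemma \ref{uba} gives $u:=u_{\Omega,f}\in C^{1,\alpha}_{\mathrm{loc}}(\Omega)\cap L^\infty(\R^2)$. Arguing exactly as in the proof of Lemma \ref{uc}, but with the constant sequence $\Omega_n\equiv\Omega$, the boundary regularity estimates of \cite[Theorem 1.2]{ro-s} (see also \cite{DS}) show that $u/d^{1/2}$, with $d:=\mathrm{dist}(\cdot,\Omega^c)$, extends continuously up to $\partial\Omega$ to a function of class $C^\beta$ whose restriction to $\partial\Omega$ is precisely $\partial^{1/2}_\nu u$. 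This proves the first assertion; below we set $D:=\partial^{1/2}_\nu u\in C^\beta(\partial\Omega)$.

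\emph{Step 2: reduction to a fixed domain.} Put $\Omega_t:=\Phi_t(\Omega)$, $u_t:=u_{\Omega_t,f}$ and $v_t:=u_t\circ\Phi_t$, so that $v_t$ belongs to the $t$-independent space $V:=\{v\in\mathring{H}^{\frac12}(\R^2):v=0\text{ a.e. on }\Omega^c\}$. A change of variables in the Gagliardo seminorm and in $\int_{\Omega_t}fu_t\,dx$, together with \eqref{Jfuf}, gives $J_f(\Phi_t(\Omega))=\tfrac12 a_t(v_t,v_t)-\ell_t(v_t)$, where $v_t$ is the (unique) minimizer over $V$ of $\tfrac12 a_t(\cdot,\cdot)-\ell_t$, $a_t$ is the quadratic form with kernel $J_t(\xi)J_t(\eta)\,|\Phi_t(\xi)-\Phi_t(\eta)|^{-3}$, $J_t:=|\det\nabla\Phi_t|$, and $\ell_t(v):=\int_\Omega f(\Phi_t(\xi))\,v(\xi)\,J_t(\xi)\,d\xi$. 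For $|t|$ small $a_t$ is uniformly comparable to $\|\cdot\|^2_{\mathring{H}^{\frac12}(\R^2)}$, so $v_t$ is well defined, uniformly bounded, and $v_t\to v_0=u$ strongly in $\mathring{H}^{\frac12}(\R^2)$; moreover $t\mapsto a_t(v,v)$ and $t\mapsto\ell_t(v)$ are differentiable, with jointly continuous $t$-derivatives given at $t=0$ by
\begin{align}
\dot a_0(v,v)&=\frac1{4\pi}\iint(v(\xi)-v(\eta))^2\left[\frac{{\rm div}\,\zeta(\xi)+{\rm div}\,\zeta(\eta)}{|\xi-\eta|^3}-\frac{3\,(\zeta(\xi)-\zeta(\eta))\cdot(\xi-\eta)}{|\xi-\eta|^5}\right]d\xi\,d\eta,\\
\dot\ell_0(u)&=-\int_\Omega f\,(\zeta\cdot\nabla u)\,d\xi,
\end{align}
the second expression coming from the transport formula and the vanishing of $u$ on $\partial\Omega$. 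Comparing the inequality $J_f(\Phi_t(\Omega))\le\tfrac12 a_t(u,u)-\ell_t(u)$ (obtained by using $u\in V$ as a competitor) against $\tfrac12 a_0(v_t,v_t)-\ell_0(v_t)\ge J_f(\Omega)$ (minimality of $v_0$ for $\tfrac12 a_0-\ell_0$), dividing by $t$ and letting $t\to0^\pm$ — a Danskin-type argument that avoids having to prove $t\mapsto v_t$ differentiable — we obtain
\begin{align}
\left.\frac{d}{dt}J_f(\Phi_t(\Omega))\right|_{t=0}=\tfrac12\,\dot a_0(u,u)-\dot\ell_0(u).
\end{align}

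\emph{Step 3: the Rellich--Ne\v{c}as identity, the main obstacle.} It remains to prove that
\begin{align}
\tfrac12\,\dot a_0(u,u)-\dot\ell_0(u)=-\frac\pi8\int_{\partial\Omega}D^2\,(\zeta\cdot\nu)\,d\mathcal H^1.
\end{align}
The key algebraic fact is that the $(\xi,\eta)$-kernel appearing in $\dot a_0$ equals ${\rm div}_{(\xi,\eta)}\big(|\xi-\eta|^{-3}\,(\zeta(\xi),\zeta(\eta))\big)$. Integrating by parts in $\R^2\times\R^2$, after excising the diagonal $\{|\xi-\eta|<\varepsilon\}$ and a tubular neighborhood of $\partial\Omega$, one checks that: the diagonal contribution vanishes as $\varepsilon\to0$, owing to the bound $|\zeta(\xi)-\zeta(\eta)|=O(|\xi-\eta|)$ in the kernel together with the regularity of $u$; the interior terms cancel against $\dot\ell_0(u)$ by virtue of the equation $(-\Delta)^{1/2}u=f$ in $\Omega$ from \eqref{boiade}; and the only surviving term, produced by the neighborhood of $\partial\Omega$, is the boundary integral above, the constant $\pi/8=\tfrac12\Gamma(3/2)^2$ arising from an explicit one-dimensional integral against the model boundary profile $d^{1/2}$. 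The uniform bounds $u/d^{1/2}\in C^\beta(\overline\Omega)$ and $u\in C^{1,\alpha}_{\mathrm{loc}}(\Omega)$ from Step 1 are precisely what makes these integrations by parts and limit passages legitimate, and this is where the fractional regularity theory of \cite{ro,ro-s,DS} enters. For the dilation field $\zeta(x)=x$ the identity is the fractional Pohozaev identity of Ros-Oton and Serra \cite{ro} (as one sees by noting that then the kernel reduces to $|\xi-\eta|^{-3}$, so $\dot a_0(u,u)=\|u\|^2_{\mathring{H}^{\frac12}(\R^2)}=\int_\Omega uf$); for a general $\zeta$ it follows either from their localization argument or, as in Dalibard and G\'erard-Varet \cite{DGV13}, from the computation above, and carrying this out rigorously in the present setting — possibly unbounded $\Omega$, merely $C^2$ boundary, and H\"older $f$ — is the core technical effort of the proof. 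Substituting the explicit expressions for $\dot a_0(u,u)$ and $\dot\ell_0(u)$ into the formula of Step 2 then yields the statement.
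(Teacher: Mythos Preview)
Your strategy is valid but takes a genuinely different route from the paper.  The paper does \emph{not} pull back to a fixed domain and does \emph{not} invoke a Pohozaev/Rellich--Ne\v{c}as identity.  Instead it works directly with the difference quotient: writing $\Omega_t\subset\Omega$ as an inward normal perturbation, it uses the equations for $u$ and $u_t$ to express $J_f(\Omega_t)-J_f(\Omega)$ as an integral of $u\,(-\Delta)^{1/2}u_t$ over the thin shell $\Omega\setminus\Omega_t$, rewrites this via the kernel as a double integral, passes to tubular coordinates $(x,s)\in\partial\Omega_t\times(0,t\rho_t)$ and $(y,s')\in\partial\Omega_t\times(0,R)$, and then uses only the asymptotics $u\sim D\,d^{1/2}$ from \cite{ro-s} and an explicit elementary integral (yielding the constant $\pi^2/2$) to extract the limit.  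No integration by parts over $\R^2\times\R^2$ is performed, and the boundary contribution appears automatically because the domain of integration is itself a tubular neighbourhood of $\partial\Omega$.

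Your approach is precisely the shape-derivative scheme of Dalibard--G\'erard-Varet \cite{DGV13}, which the paper explicitly contrasts with its own as ``a relatively long and technical proof''.  What you gain is structure: the Danskin argument in Step~2 is clean and the algebraic identity for the kernel in Step~3 is elegant.  What you pay is that the actual content --- the fractional Rellich identity for a general field $\zeta$, on a possibly unbounded domain with only $C^2$ boundary and H\"older $f$ --- is not available off the shelf in \cite{ro} (stated there for $\zeta=x$ on bounded domains) and you acknowledge that carrying it out ``is the core technical effort''.  The paper's direct computation sidesteps this entirely and is self-contained modulo the boundary regularity from \cite{ro-s}, which is why it is shorter.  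One small caveat on your Step~2: differentiating $\ell_t(u)=\int_\Omega f(\Phi_t(\xi))\,u(\xi)\,J_t(\xi)\,d\xi$ in $t$ requires, at face value, differentiability of $f$, which you only assume to be $C^\alpha$; you should instead pass the derivative onto $u\,J_t$ (integrable since $|\nabla u|\lesssim d^{-1/2}$) before writing the formula for $\dot\ell_0(u)$.
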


\begin{proof}
  Let $\Om_t := \Phi_t(\Om)$. Since $\partial\Omega$ is of class
  $C^2$, for all $x \in \partial \Omega_t$ and $|t|$ small enough we
  can write
\begin{align}
  \label{Phim1tx}
\Phi_t^{-1}(x)=x+t \rho_t(x)\nu_t(x),
\end{align}
where $\rho_t\in C^{2}(\partial\Omega_t)$ is a scalar function and
$\nu_t$ is the unit outward normal to $\partial
\Omega_t$. Furthermore, the right-hand side of \eqref{Phim1tx}
establishes a bijection between $\partial \Om_t$ and $\partial \Om$,
and we have
\begin{align}
  \label{rho0}
  \rho_0(x) := \lim_{t \to 0} \rho_t(x) = -\zeta(x) \cdot
  \nu(x) \qquad \forall x \in \partial 
  \Omega.
\end{align}

For $t > 0$ sufficiently small, let $\Om_t \subset \Om$ be a regular
inward deformation of $\Om$, namely, $\Om_t$ is such that
\eqref{Phim1tx} holds true with some $\rho_t \geq 0$.  Note that it is
enough to consider inward perturbations, since for outward
perturbations one would simply interchange the roles of $\Om_t$ and
$\Om$ in the argument below.

We denote $u:=u_{\Omega,f}$ and $u_t:=u_{\Omega_t,f}$ for simplicity.
Recall that $u$ and $\,u_t$ solve pointwise
\begin{equation}\label{eq:3}
  \begin{cases}  
    (-\Delta)^{\frac12} u = f \quad &\text{in} \quad \Omega, \\
    u = 0 & \text{in} \quad \Omega^c,
  \end{cases}
\end{equation}
and
\begin{equation}
  \label{eq:5}
  \begin{cases}
    (-\Delta)^{\frac12} u_t = f \quad &\text{in} \quad \Omega_t, \\
    u_t = 0 & \text{in} \quad \Omega_t^c.
  \end{cases}
\end{equation}
In particular, by \cite[Theorem 1.2]{ro-s} we have
$|u(x)| \leq C \sqrt{\text{dist}(x, \partial \Omega)}$ for some
constant $C>0$, which in turn implies that
$|(-\Delta)^{\frac12} u(x)|\le C/\sqrt{\text{dist}(x, \partial
  \Omega)}$, and the same holds for the function $u_t$, with $\Om$
replaced by $\Om_t$ and the constant $C$ independent of $t$ for all
small enough $t$. These estimates justify all the computations of
integrals involving $u$ and $u_t$ below.
      
From \eqref{eq:3}, \eqref{eq:5} and \eqref{Jfuf} we get
\begin{align}
  \label{eq:6}
  J_f(\Om_t) - J_f(\Om)
  & 
    = \frac12 \int_{\R^2} u  (-\Delta)^{\frac12} u \, dx -
    \frac12 \int_{\R^2} u_t  (-\Delta)^{\frac12} u_t \, dx \notag \\
  & = -\frac12 \int_{\R^2} (u_t + u)
    (-\Delta)^{\frac12} (u_t - u) \, dx \\
  & = -\frac12 \int_{\Om \setminus \Om_t} u
    (-\Delta)^{\frac12} u_t \, dx + \frac12 \int_{\Om
    \setminus \Om_t} u f\, dx. \notag 
\end{align}
The last term in \eqref{eq:6} satisfies
\begin{align}
  \left| \frac12\int_{\Om\setminus\Om_t}u f \,dx \right|
  \le C\, \|f\|_{L^\infty(\Om\setminus\Om_t)}\, 
  \|{\rho_t}\|^\frac 12_{L^\infty(\partial\Om)}\,
  t^\frac 12\, |\Om\setminus\Om_t|
  \le C' t^{\frac32}=o(t).
\end{align}
We thus focus on the first term of the right-hand side of
\eqref{eq:6}. We have:
\begin{align}
  \label{eq:7}
  -\frac12 \int_{\Om \setminus \Om_t} u
  (-\Delta)^{\frac12} u_t \, dx 
  & = -{1 \over 8 \pi}\,
    \int_{\Om \setminus \Om_t} \int_{\R^2}
    u(x) \, {2 u_t(x) - u_t(x - y) - u_t(x + y) \over |y|^3}
    \, dy \, dx \notag 
  \\ 
  & =\ {1 \over 4 \pi} \int_{\Om \setminus \Om_t} \int_{\Om_t} {u(x)
    u_t(y) \over |x - y|^3} \, dy \, dx. 
\end{align}
	
Next we split the integral over $\Om_t$ in \eqref{eq:7} into
integrals over $\Om^R$ and $\Om_t \backslash \Om^R$, where
\begin{equation}\label{eq:16}
  \Om^R := \{ x \in \Om : \text{dist}(x, \Om^c) > R\}
\end{equation}
and $R > 0$ is such that $\partial \Om^R$ is of class $C^2$ and
$\Om_t \backslash \Om^R$ consists of a union of disjoint strip-like
domains. We have
\begin{align}
	\label{eq:19}
          & -\frac12 \int_{\Om \setminus \Om_t} u
            (-\Delta)^{\frac12} u_t \, dx \notag \\
          &  =  {1 \over 4 \pi} \int_{\Om \setminus \Om_t} \int_{\Om^R} {u(x)
            u_t(y) \over |x - y|^3} \, dy \, dx +  {1 \over 4 \pi} \int_{\Om
            \setminus \Om_t} \int_{\Om_t \setminus \Om^R} {u(x)
            u_t(y) \over |x - y|^3} \, dy \, dx \notag \\
          & =  {1 \over 4 \pi} \int_{\Om \setminus \Om_t} \int_{\Om^R} {u(x)
            u_t(y) \over |x - y|^3} \, dy \, dx +  {1 \over 4 \pi} \notag \\
          & \times \int_{\partial \Om_t}
            \int_{\partial \Om_t} \int_0^{t {\rho_t}(x)} \int_0^R  {u(x +
            s \nu_t(x)) u_t(y - s' \nu_t(y)) \over |x - y +
            s \nu_t(x) + s' \nu_t(y)|^3} \notag \\
          & \qquad \qquad \times (1 + s \kappa(x)) (1 - s'
            \kappa(y)) \, ds' \, ds 
            \, d\mathcal H^1(y) \, d \mathcal H^1(x),
	\end{align}
	where $\kappa$ is the curvature of $\partial \Om_t$, positive
        if $\Om_t$ is convex.  As above, one can check that the first
        term on the right-hand side of \eqref{eq:19} is
        $O(t^{3/2} R^{-2})$ for all $t$ small enough, hence we can
        focus again just on the second term.
	
	To estimate the last integral in the right-hand side of
        \eqref{eq:19}, we first observe that the curvature
        contributions inside the brackets can be bounded by $O(R)$
        and, therefore, a posteriori give rise to errors of order
          $O(Rt)$ for all $t$ small enough, as the integral itself
          will be shown to be $O(t)$. Thus, we have
	\begin{multline}
	\label{eq:10}
          -\frac12 \int_{\Om \setminus \Om_t} u
            (-\Delta)^{\frac12} u_t \, dx = \frac{1}{4 \pi} 
           \\          
          \times \int_{\partial \Om_t}
            \int_{\partial \Om_t} \int_0^{t {\rho_t}(x)} \int_0^{R}  {u(x +
            s \nu_t(x)) u_t(y - s' \nu_t(y)) \over |x - y +
            s \nu_t(x) + s' \nu_t(y)|^3} \, ds' \, ds \, d\mathcal H^1(y) \, d
            \mathcal H^1(x)  \\
          + O(t^{3/2} R^{-2} ) +O(Rt).
	\end{multline}
	
For $x \in \partial \Om_t$, we let
\begin{equation}\label{J0}
  F(x):=\int_{\partial \Om_t}\int_0^{t{\rho_t}(x)}\int_0^R
  \frac{u(x+s\nu_t(x))u_t(y-s'\nu_t(y))}{|x-y+s\nu_t(x)
    +s'\nu_t(y)|^3}\,ds'\,ds\,d\mathcal H^1(y) ,
\end{equation}
and split the integral over $y$ into a near-field part
\begin{align}
  \label{HR}
  F_R(x):=\int_{\partial \Om_t \cap
  B_R(x)}\int_0^{t{\rho_t}(x)}\int_0^R 
  \frac{u(x+s\nu_t(x))u_t(y-s'\nu_t(y))}{|x-y+s\nu_t(x)
  +s'\nu_t(y)|^3}\,ds'\,ds\,d\mathcal  
  H^1(y) ,
\end{align}
and the far field part $F(x) - F_R(x)$. As with \eqref{eq:10}, the
latter may be estimated to be $O(t^{3/2} R^{-2})$, so we focus on the
computation of $F_R(x)$. To that end, we let $y = y(\sigma)$ be the
arc-length parametrization of $\partial \Omega_t \cap B_R(x)$ relative
to $x$ and observe that
\begin{itemize}
\item[(i)]
  $(y(\sigma)-x)\cdot\nu_t(x)=O(\sigma^2)$,
\item[(ii)]$\nu_t(x)\cdot\nu_t(y(\sigma))=1 +
  O(\sigma^2)$,
\item[(iii)] $|y(\sigma)-x|=\sigma+O(\sigma^3)$,
\end{itemize}
uniformly in $x$ and $t$.  Therefore,
\begin{align}
  |y(\sigma)-x-
  &s\nu_t(x)-s'\nu_t(y(\sigma))|^2=|y(\sigma)-x-(s+s')\nu_t(x)-s'(\nu_t(y(\sigma)-\nu_t(x)))|^2
    \notag \\ 
  &=|y(\sigma)-x|^2+(s+s')^2-2(y(\sigma)-x)\cdot
    \nu_t(x)
    (s+s')+|s'|^2|\nu_t(y(\sigma))-\nu_t(x)|^2\notag \\
  &\quad-2s'(y(\sigma)-x)\cdot(\nu_t(y(\sigma))-\nu_t(x))+2(s+s')s'\nu_t(x)\cdot(\nu_t(y(\sigma))-\nu_t(x))
    \notag
  \\
  &=\sigma^2+(s+s')^2+O(\sigma^4)+O(\sigma^2R) + O(\sigma^2 R^2) \notag \\
  &=(\sigma^2+(s+s')^2)(1+O(R)),
\end{align}
again, uniformly in $x$ and $t$, for all $R$ small enough.  Thus we
have
\begin{equation}\label{J1}
  |y(\sigma)-x-s\nu_t(x)-s'\nu_t(y(\sigma))|^{-3}=(\sigma^2+(s+s')^2
  )^{-\frac32}(1+O(R)).  
\end{equation}

By the uniform convergence of $\partial^{1/2}_\nu u_t$ to
$\partial^{1/2}_\nu u$ as $t\to0$, and the fact that
$\partial^{1/2}_\nu u_t$ is of class $C^{\beta}(\partial\Omega_t)$ for
all $\beta\in (0,1/2)$ (by Lemma \ref{uc}), we have that
\begin{equation}\label{J2}
\begin{aligned}
  u(x+s\nu_t(x))&= (1+o_t(1)) \, \partial^{1/2}_\nu
  u(x+t{\rho_t}(x)\nu_t(x))\sqrt{t{\rho_t}(x)-s}\\
  &= (1+o_t(1)) \, \partial^{1/2}_\nu
  u_t(x)\sqrt{t{\rho_t}(x)-s}
\end{aligned}
\end{equation}
 and
 \begin{equation}\label{J3}
 \begin{aligned}
   u_t(y(\sigma)-s'\nu_t(y(\sigma))&= (1+o_t(1)) \,
   \partial^{1/2}_\nu u_t(y(\sigma))\sqrt{s'}\\
   & = (1+o_t(1) + o_R(1)) \, \partial^{1/2}_\nu u_t(x)\sqrt{s'}.
  \end{aligned}
 \end{equation}
 Plugging \eqref{J1},\eqref{J2} and \eqref{J3} into \eqref{J0}, we get
\begin{equation}\label{J4}
  F_R(x) 
  =(1+o_t(1) + o_R(1))
  \int_{\sigma_R^-(x)}^{\sigma_R^+(x)}\int_0^{t \rho_t(x)}\int_0^R
  \frac{|\partial^{1/2}_\nu u_t(x)|^2\sqrt{(t{\rho_t}(x)-s)s'}}{(\sigma^2+(s+s')^2)^{3/2}}\,
  ds'\,ds\,d\sigma,
\end{equation}
where $\sigma_R^\pm(x) = \pm R + O(R^3)$.

Observe that $F_R(x) = 0$ if $\rho_t(x) = 0$. If $\rho_t(x) > 0$, we
can perform the change of variables
\begin{align}
  z={s \over t{\rho_t}(x)}, \qquad 
  z'={s' \over t{\rho_t}(x)}, \qquad \zeta={\sigma \over
  t{\rho_t}(x)},    
\end{align}
to obtain
\begin{align}
  F_R(x)=(1
  & +o_t(1) +o_R(1)) \,
    t{\rho_t}(x)|\partial^{1/2}_\nu
    u_t(x)|^2 \notag \\
  & \times \int_{\sigma_R^-(x) / (t \rho_t(x))}^{\sigma_R^+(x) / (t
    \rho_t(x))} \int_0^1
    \int_0^{R/(t{\rho_t}(x))}
    \frac{\sqrt{(1-z)z'}}{(\zeta^2+(z+z')^2)^{3/2}} \, dz' \, dz \, 
    d \zeta,    
\end{align}
which is also valid if $\rho_t(x) = 0$.  By Dominated Convergence
Theorem, as $t \to 0$ the integral in the right-hand side converges to
\begin{align}
  \int_{-\infty}^\infty \int_0^1
  \int_0^\infty
  \frac{\sqrt{(1-z)z'}}{(\zeta^2+(z+z')^2)^{3/2}} \, dz' \, dz \, 
  d \zeta = 2 \int_0^1 \int_0^\infty
  \frac{\sqrt{(1-z)z'}}{(z+z')^2} \, dz' \, dz = {\pi^2 \over 2}.
\end{align}	
We thus have
	\begin{align}
	\label{eq:11}
          -\frac12 \int_{\Om \setminus \Om_t} u
          (-\Delta)^{\frac12} u_t \, dx \notag 
          = (1+o_t(1) +o_R(1)) \frac{\pi t}{8} \int_{\partial
          \Om_t } |\partial_\nu  
          u_t(x)|^2 \rho_t(x) \, d
          \mathcal H^1(x),
	\end{align}
        so that by the above estimates and the uniform continuity of
          $\rho_t$ and $\partial^{1/2}_\nu u_t$ in $t$, we get
	\begin{equation}\label{eq:12}
	\begin{aligned}
          \lim_{t \to 0} {J_f(\Om_t) - J_f(\Om) \over t} &=
          (1+o_R(1)) \, \frac{\pi}{8} \int_{\partial \Om}
          |\partial_\nu u(x)|^2 \rho_0(x) d \mathcal H^1(x).
	\end{aligned}
	\end{equation} 
        Finally, by \eqref{rho0} the thesis follows by sending
        $R \to 0$ in \eqref{eq:12} and Lemma \ref{uc}.
\end{proof}

\begin{remark}\rm 
  As was mentioned earlier, for bounded domains and under stronger
  regularity assumptions the result in Theorem \ref{dgv} was obtained
  in \cite{DGV13} with a different proof. In \cite[Theorem 1]{DGV13},
  the first variation is stated with a non-explicit constant, but an
  analysis of the proof shows that their constant agrees with ours, as
  it should.  Our proof exploits the boundary regularity for non-local
  elliptic problems developed in \cite{ro-s} (see also the survey
  \cite{ro} and \cite{DS}), which simplifies the proof even for
  bounded domains.
\end{remark}

We are now in a position to compute the first variation of the
functional $\I_1$ on $C^2-$regular bounded sets, and consequently the
Euler--Lagrange equation for $E_{\lambda}$ for such sets.  Recalling
\eqref{capuno}, it is enough to compute the first variation of the
$\frac 12$-capacity ${\rm cap}_1(\Om)$, which follows directly from
Theorem \ref{dgv}, as we show below.

\begin{theorem}\label{teocinquebis}
  Let $\Omega$ be a compact set with boundary of class $C^2$, let
  $\nu$ be the outward pointing normal vector to $\partial\Omega$
  and let $u_\Om$ be the $\frac12$-capacitary potential of $\Omega$
    defined in \eqref{uOm}. Then, the $1/2$-derivative
  $\partial^{1/2}_\nu u_\Om$ is well-defined and belongs to
  $C^\beta(\partial\Omega)$ for any $\beta\in (0,1/2)$.  Moreover,
  letting $\zeta$ and $\Phi_t$ be as in Theorem \ref{dgv}, there holds
	\begin{equation}
          \left. \frac{d}{dt}{\rm cap}_1(\Phi_t(\Omega)) \right|_{t=0}= 
          \frac{\pi}{4}\,\int_{\partial\Omega}(\partial^{1/2}_\nu
          u_\Omega(x))^2\zeta(x)\cdot \nu(x)\,d\mathcal H^{1}(x). 
	\end{equation}
\end{theorem}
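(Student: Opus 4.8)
\emph{Strategy.} The plan is to recast ${\rm cap}_1$ as twice a functional of the type $J_h$ from Theorem~\ref{dgv}, evaluated on the \emph{exterior} domain $\Omega^c:=\R^2\setminus\Omega$, and then invoke Theorem~\ref{dgv} verbatim. Since $\Phi_t\to\mathrm{Id}$ uniformly on $\overline\Omega$, there are a bounded open neighbourhood $\mathcal N$ of $\Omega$ and $t_0>0$ with $\Phi_t(\Omega)\subset\mathcal N$ for all $|t|<t_0$. Fix once and for all a function $\psi\in C^\infty_c(\R^2)$ with $0\le\psi\le 1$ and $\psi\equiv 1$ on $\mathcal N$, and set $h:=(-\Delta)^{1/2}\psi$. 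Then $h\in C^\infty(\R^2)$ with $h(x)=O(|x|^{-3})$ as $|x|\to\infty$, so $h\in L^\infty(\R^2)\cap L^{4/3}(\R^2)$ and $h|_D\in C^\alpha_\mathrm{loc}(D)$ for every open set $D$ and every $\alpha\in(0,1)$; hence $h$ is an admissible datum for Theorem~\ref{dgv} and for Lemma~\ref{uba} on any open set with $C^2$ boundary.

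\emph{Identification.} For $|t|<t_0$ put $\Omega_t:=\Phi_t(\Omega)$ and let $u_{\Omega_t}$ be its $\tfrac12$-capacitary potential, so that $u_{\Omega_t}\in\mathring{H}^{1/2}(\R^2)$, $u_{\Omega_t}\equiv 1$ on $\Omega_t$, and $(-\Delta)^{1/2}u_{\Omega_t}=0$ on $\Omega_t^c$. I claim that $\psi-u_{\Omega_t}$ coincides with the unique minimizer $u_{\Omega_t^c,h}$ of $I_{\Omega_t^c,h}$. Indeed $\psi-u_{\Omega_t}\in\mathring{H}^{1/2}(\R^2)$, it vanishes on $\Omega_t=(\Omega_t^c)^c$ since there $\psi=u_{\Omega_t}=1$, and on $\Omega_t^c$ one has $(-\Delta)^{1/2}(\psi-u_{\Omega_t})=h-0=h$; testing against $\varphi\in C^\infty_c(\Omega_t^c)$ and using the symmetry of the $\mathring{H}^{1/2}$ bilinear form, this is precisely the weak formulation of \eqref{boiade} with $\Omega$ replaced by $\Omega_t^c$ and $f=h$, so the claim follows by uniqueness. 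Using \eqref{Jfuf} (legitimate since $u_{\Omega_t^c,h}|_{\Omega_t^c}\in C^{1,\alpha}_\mathrm{loc}\cap L^\infty$ by Lemma~\ref{uba}) and the symmetry of the bilinear form once more,
\[
  J_h(\Omega_t^c)=-\tfrac12\int_{\R^2}(\psi-u_{\Omega_t})\,h\,dx
  =-\tfrac12\,\|\psi\|_{\mathring{H}^{1/2}(\R^2)}^2+\tfrac12\int_{\R^2}\psi\,(-\Delta)^{1/2}u_{\Omega_t}\,dx .
\]
Since $(-\Delta)^{1/2}u_{\Omega_t}$ is supported in $\Omega_t$ (it vanishes on $\Omega_t^c$ and charges no part of $\partial\Omega_t$ by Lemma~\ref{l:optm}), where $\psi\equiv 1\equiv u_{\Omega_t}$, the last integral equals $\int_{\R^2}u_{\Omega_t}(-\Delta)^{1/2}u_{\Omega_t}\,dx=\|u_{\Omega_t}\|_{\mathring{H}^{1/2}(\R^2)}^2={\rm cap}_1(\Omega_t)$ by \eqref{1su1}. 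Therefore
\[
  {\rm cap}_1(\Omega_t)=2\,J_h(\Omega_t^c)+\|\psi\|_{\mathring{H}^{1/2}(\R^2)}^2 ,
\]
where the last term is independent of $t$.

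\emph{Differentiation and orientation.} Since $\Phi_t(\Omega^c)=(\Phi_t(\Omega))^c=\Omega_t^c$ and $\partial(\Omega^c)=\partial\Omega$ is compact of class $C^2$, Theorem~\ref{dgv} applies to $t\mapsto J_h(\Phi_t(\Omega^c))$. Let $\nu^\ast:=-\nu$ be the outward unit normal to $\Omega^c$ along $\partial\Omega$. For $x\in\partial\Omega$ and small $s>0$ the point $x-s\nu^\ast(x)=x+s\nu(x)$ lies in $\mathcal N\setminus\Omega$, where $\psi\equiv 1$; since also $u_\Omega(x)=1$, the definition of the normal $\tfrac12$-derivative gives
\[
  \partial^{1/2}_{\nu^\ast}u_{\Omega^c,h}(x)=\lim_{s\to0^+}\frac{(\psi-u_\Omega)(x+s\nu(x))}{s^{1/2}}
  =-\lim_{s\to0^+}\frac{u_\Omega(x+s\nu(x))-u_\Omega(x)}{s^{1/2}}=-\,\partial^{1/2}_\nu u_\Omega(x) .
\]
In particular $\partial^{1/2}_\nu u_\Omega$ is well-defined and belongs to $C^\beta(\partial\Omega)$ for every $\beta\in(0,1/2)$, by the regularity part of Theorem~\ref{dgv}. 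Applying the first-variation formula of Theorem~\ref{dgv} to $J_h$ on $\Omega^c$, and using that $(\partial^{1/2}_{\nu^\ast}u_{\Omega^c,h})^2=(\partial^{1/2}_\nu u_\Omega)^2$ while $\zeta\cdot\nu^\ast=-\zeta\cdot\nu$, we get
\[
  \left.\frac{d}{dt}J_h(\Phi_t(\Omega^c))\right|_{t=0}
  =\frac{\pi}{8}\int_{\partial\Omega}\big(\partial^{1/2}_\nu u_\Omega(x)\big)^2\,\zeta(x)\cdot\nu(x)\,d\mathcal H^{1}(x) .
\]
Multiplying by $2$ and recalling ${\rm cap}_1(\Omega_t)=2J_h(\Omega_t^c)+\mathrm{const}$ yields the asserted identity.

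\emph{Main obstacle.} The only non-routine step is the identification $\psi-u_{\Omega_t}=u_{\Omega_t^c,h}$ with a datum $h$ that does \emph{not} depend on $t$: this is exactly what turns the boundary variation of the (global, obstacle-type) capacitary energy into a boundary variation of a Dirichlet functional of the form handled by Theorem~\ref{dgv}. Once it is in place, only the bookkeeping of the constant $\pi/4=2\cdot\pi/8$ and of the orientation of the normal along $\partial\Omega$ remains.
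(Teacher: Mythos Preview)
Your proof is correct and follows essentially the same approach as the paper: fix a cutoff equal to $1$ near $\Omega$, use it to rewrite ${\rm cap}_1(\Omega_t)=2J_h(\Omega_t^c)+\|\psi\|^2_{\mathring{H}^{1/2}}$ with a datum $h$ independent of $t$, and then apply Theorem~\ref{dgv} on the exterior domain. The only differences are cosmetic (your $h=(-\Delta)^{1/2}\psi$ versus the paper's $f=-(-\Delta)^{1/2}\varphi$, and you derive the key identity via \eqref{Jfuf} rather than directly from the variational formulation), and you are more explicit than the paper about choosing the cutoff uniformly in $t$ and about the normal-orientation bookkeeping.
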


\begin{proof}
  Let $\varphi\in C^\infty_c(\R^2)$ be such that $\varphi=1$ in an
  open neighborhood of $\Omega$.  Observe that our choice of $\varphi$
  implies that, if
\begin{align}
f(x):=-(-\Delta)^\frac12\varphi (x)= -\frac{1}{4\pi} \, \int_{\R^2} \frac{2\varphi(x)-\varphi(x-y)-\varphi(x+y)}{|y|^3}\,dy,
\end{align}
then $\in L^\infty(\R^2)\cap L^\frac{4}{3}(\R^2) \cap C^{0,1}(\R^2)$.
Notice also that any test function $u$ in the definition of
${\rm cap}_1(\Om)$ such that $u=1$ on $\Om$ can be put in
correspondence with a test function $v=u-\varphi$ in the definition of
the auxiliary functional $I_{\Om^c,f}$ in \eqref{minJf}. Moreover,
since
\begin{align}
  \frac 12\, \| u\|_{\mathring{H}^\frac12(\R^2)}^2=\frac 12\,
  \| v\|_{\mathring{H}^\frac12(\R^2)}^2+\frac 12\,  
  \|\varphi\|_{\mathring{H}^\frac12(\R^2)}^2-\int_{\R^2} v f \, dx,
\end{align} 
we get that
\begin{align}
\frac 12\,{\rm cap}_1(\Omega)=\frac 12\, \|\varphi\|_{\mathring{H}^\frac12(\R^2)}^2+J_f(\Omega^c),
\end{align}
and the minimizer $u_\Omega$ satisfies
$u_\Omega=v_{\Omega^c,f} + \varphi$, where $v_{\Om^c,f}$ is
  the minimizer of $I_{\Om^c,f}$. Observing also that
$ \partial^{1/2}_\nu u_\Omega = \partial^{1/2}_\nu
v_{\Omega^c,f}$, the conclusion follows by Theorem \ref{dgv}.
\end{proof}

\smallskip
\noindent
Finally, Theorem \ref{teocinque} is a direct consequence of Theorem
\ref{teocinquebis}, together with \eqref{capuno} and \eqref{uOm}.

\bibliographystyle{plain}

\end{document}